\def\l@subsection{\@tocline{2}{0pt}{2.5pc}{5pc}{}}
\DeclareRobustCommand{\SkipTocEntry}[5]{}
\let\oldfootnotemark\footnotemark
\let\oldfootnotetext\footnotetext
\let\oldfootnote\footnote
\renewcommand\footnote[1]{\addtocounter{footnote}{1}\hypertarget{fnbackref.\arabic{footnote}}{}\addtocounter{footnote}{-1}\oldfootnote{#1\fnbackref}}
\renewcommand\footnotemark{\addtocounter{footnote}{1}\hypertarget{fnbackref.\arabic{footnote}}{}\addtocounter{footnote}{-1}\oldfootnotemark}
\renewcommand\footnotetext[1]{\oldfootnotetext{#1\fnbackref}}
\newcommand{\fnbackref}{\hyperlink{fnbackref.\arabic{footnote}}{\footnotesize$\uparrow$}}
\theoremstyle{definition}
\newtheorem{dfn}{Definition}[subsection]
\newaliascnt{assumption}{dfn}
\theoremstyle{remark}
\newaliascnt{rmk}{dfn}
\newtheorem{rmk}[rmk]{Remark}
\newaliascnt{ex}{dfn}
\newtheorem{ex}[ex]{Example}
\theoremstyle{plain}
\newaliascnt{thm}{dfn}
\newtheorem{thm}[thm]{Theorem}
\newaliascnt{prop}{dfn}
\newtheorem{prop}[prop]{Proposition}
\newaliascnt{prop2}{dfn}
\newaliascnt{lem}{dfn}
\newtheorem{lem}[lem]{Lemma}
\newaliascnt{cor}{dfn}
\newtheorem{cor}[cor]{Corollary}
\DeclareMathOperator{\Hom}{Hom}
\DeclareMathOperator{\im}{im}
\newcommand{\derived}{\mathrm{D}}
\newcommand{\bounded}{\mathrm{b}}
\newcommand{\qc}{\mathrm{qc}}
\newcommand{\mb}[1]{\mathbb{#1}}
\newcommand{\ca}[1]{\mathcal{#1}}
\newcommand{\mcr}[1]{\mathscr{#1}}
\newenvironment{sizepar}[2]
    {\fontsize{#1}{#2}\selectfont}
    {\par}
\newcommand{\nodeequation}[1]{%
  \let\label\ltx@label
  \refstepcounter{equation}%
  #1
  \quad
  (\theequation)%
}
\newcommand{\ind}[1]{L \mathrm{Ind}_{#1}}
\newcommand{\res}[1]{\mathrm{Res}_{#1}}
\newcommand{\oL}[1]{\stackrel{L}{\otimes}_{\ca{#1}}}
\newcommand{\ob}[1]{\, \overline{\otimes}_{\ca{#1}} \, }
\newcommand{\oo}[1]{\otimes_{\ca{#1}}}
\newcommand{\rmodule}[2]{\left(\begin{array}{cc} #1 & #2 \end{array}\right)}
\newcommand{\lmodule}[2]{\left(\begin{array}{c} #1 \\ #2 \end{array}\right)}
\newcommand{\bimodule}[4]{\left(\begin{array}{cc} #1 & #2 \\ #3 & #4 \end{array}\right)}
\newcommand{\barr}[1]{\overline{\ca{#1}}}
\newcommand{\barmod}[1]{\overline{{\bf Mod}}\text{-}\ca{#1}}
\newcommand{\bimod}[2]{\ca{#1}\text{-}{\bf Mod}\text{-}\ca{#2}}
\newcommand{\stmod}[1]{{\bf Mod}\text{-}\ca{#1}}
\newcommand{\barbimod}[2]{\ca{#1}\text{-}\overline{{\bf Mod}}\text{-}\ca{#2}}
\newcommand{\dual}[2]{#1^{\barr{#2}}}
\newcommand{\add}[1]{#1^{\mathrm{add}}}
\begin{document}

\title{{\bf On the composition of two spherical twists}}

\author{Federico Barbacovi}
\address{Department of Mathematics, University College London}
\email{federico.barbacovi.18@ucl.ac.uk}

\begin{abstract}

   E. Segal proved that any autoequivalence of an enhanced triangulated category can be realised as a spherical twist.
   However, when exhibiting an autoequivalence as a spherical twist one has various choices for the source category of the spherical functor.
   We describe a construction that realises the composition of two spherical twists as the twist around a single spherical functor whose source category semiorthogonally decomposes into the source categories for the spherical functors we started with.

   We give a description of the cotwist for this spherical functor and prove, in the special case when our starting twists are around spherical objects, that the cotwist is the Serre functor (up to a shift).
   We finish with an explicit treatment for the case of $\mb{P}$-objects.
\end{abstract}

\maketitle

\tableofcontents


\section{Introduction}
In \cite{Seg-autoeq-spherical-twist}, Segal proved that every autoequivalence of a triangulated category can be realised as a spherical twist around a spherical functor.
The construction of the spherical functor is explicit but, as pointed out in {\it ibidem}, the source category for this functor is not optimal, in the sense that it could be larger than needed.

An example of spherical twist is given by the spherical twist around a spherical object as introduced in \cite{Seidel-Thomas01}.
Let $X$ be a smooth projective variety of dimension $d$ over a field $k$, and consider an object $E \in \derived^{\bounded} (\text{Coh}(X))$ such that $\text{RHom}_{X}(E,E) = k \oplus k[-d]$ and $E \otimes \omega_{X} \simeq E$.
Then, one can define an autoequivalence $T_{E} \colon \derived^{\bounded}(\text{Coh}(X)) \rightarrow \derived^{\bounded}(\text{Coh}(X))$ such that, for every $F \in \derived^{\bounded} (\text{Coh}(X))$, the object $T_E(F)$ fits in a distinguished triangle
\[
	\text{RHom}_{X}(E, F ) \otimes E \rightarrow F \rightarrow T_{E}(F).
\]
In the setting of spherical functors, the functor $T_{E}$ can be realised as the spherical twist around the spherical functor $F \colon \derived^{\bounded}(\text{pt}) \rightarrow \derived^{\bounded}(\text{Coh}(X))$ that sends $k$ to $E$.
It is clear that in this case $\derived^{\bounded}(\text{pt})$ is the nicest category we can look for.

Assume now that we have two spherical objects $E_1,E_2 \in \derived^{\bounded}(\text{Coh}(X))$; we know that the autoequivalence $\Phi = T_{E_2} \circ T_{E_1}$ can be realised as a spherical twist around a spherical functor, therefore it is natural to ask ourselves what is this functor and what is the best source category for it.
As we are dealing with two objects, one might expect the morphisms between them to play a role in the description of the functor $\Phi$.
This expectation is indeed correct, and to represent $\Phi$ as a spherical twist around a spherical functor we will use the dg-algebra
\[
	R = k \, \mathrm{id}_{E_2} \oplus \text{RHom}_{X}(E_2, E_1) \oplus k \, \mathrm{id}_{E_1},
\]
where the multiplication comes from the algebra structure of $\mathrm{RHom}_{X}(E_2 \oplus E_1, E_2 \oplus E_1)$.

Let us now consider the general setting of spherical functors as developed in \cite{Anno-Log-17}: let $\ca{A},\ca{B},\ca{C}$ be three small dg-categories over a field $k$, and $M \in \derived(\ca{A}\text{-}\ca{C})$, $N \in \derived(\ca{B}\text{-}\ca{C})$ be two spherical bimodules, {\it i.e.}, bimodules whose associated functors are spherical.

We aim to find a spherical functor whose twist gives the composition of the twists associated to $M$ and $N$.
The following is our main result.

\begin{thm}[\autoref{twist-twist=twist}]
	Let $\ca{A}$, $\ca{B}$ and $\ca{C}$ be three small dg-categories over a field $k$, and $M \in \derived(\ca{A}\text{-}\ca{C})$, $N \in \derived(\ca{B}\text{-}\ca{C})$ be two  spherical bimodules.
	Then, the bimodule $P := \rmodule{M}{N}^t \in \derived((\ca{B} \sqcup_{\varphi} \ca{A})\text{-}\ca{C})$, $\varphi = \textup{RHom}_{\ca{C}}(N,M)$, with structure morphism
	\[
		\mathrm{RHom}_{\ca{C}}(N,M) \oL{B} N \xrightarrow{\mathrm{tr}} M
	\]
	is spherical, the twist around it is given by the composition $t_N \circ t_M$, and the cotwist is described by the matrix
	\[
		C_P = \left(
		\begin{array}{cc}
			C_M & 0 \\
			\mathrm{RHom}_{\ca{C}}(M,N)[-1] & C_N
		\end{array}
		\right)
		\in \derived((\ca{B} \sqcup_{\varphi} \ca{A})\text{-}(\ca{B} \sqcup_{\varphi} \ca{A}))
	\]
	with non-zero structure morphisms
	\[
		\begin{aligned}
			& \, \mathrm{RHom}_{\ca{C}}(M,N)[-1] \oL{A} \mathrm{RHom}_{\ca{C}}(N,M) \xrightarrow{\sigma \circ \mathrm{cmps}} C_N\\
			& \, \mathrm{RHom}_{\ca{C}}(N,M) \oL{B}  \mathrm{RHom}_{\ca{C}}(M,N)[-1] \xrightarrow{\sigma \circ \mathrm{cmps}} C_M,
		\end{aligned}
	\]
	where $\mathrm{cmps}$ denotes the composition of morphisms.
\end{thm}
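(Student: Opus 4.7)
The plan is to leverage the block structure of bimodules over the upper triangular dg-category $\ca{B} \sqcup_\varphi \ca{A}$. Such a bimodule decomposes into its $\ca{A}$- and $\ca{B}$-components joined by a single structure map, and functors between the corresponding derived categories acquire block-triangular forms. In this language, $P$ corresponds to the pair $(M,N)$ with gluing $\mathrm{tr}$, and the candidate cotwist $C_P$ corresponds to $(C_M, C_N)$ with off-diagonal entry $\mathrm{RHom}_\ca{C}(M,N)[-1]$. The first step is to produce the right and left adjoints of $F := -\otimes^{L}_{\ca{B}\sqcup_\varphi\ca{A}} P$: the right adjoint $R$ is block-triangular with components $\mathrm{RHom}_\ca{C}(M,-)$ and $\mathrm{RHom}_\ca{C}(N,-)$ glued by the map induced by $\mathrm{tr}$, and dually for $L$ using the cotrace. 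With the adjoints in hand, I would verify the Anno--Logvinenko sphericality axioms for $F$ block-by-block: each diagonal entry reduces to the corresponding axiom for $M$ or $N$ (which holds by hypothesis), while off-diagonal entries sit in triangles controlled by the gluing.

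Second, to identify the twist, I would exhibit a two-step filtration of $FR(X)$ coming from the semiorthogonal decomposition of $\derived(\ca{B}\sqcup_\varphi\ca{A})$, with pieces $f_M r_M(X)$ and $f_N r_N(X)$. Restricted to each piece, the counit map $FR(X)\to X$ reproduces the defining fibre sequence of $t_M$ or $t_N$, and assembling the two through the octahedral axiom using the gluing of $P$ yields a filtration of $X \to t_P(X)$ that factors as $X \to t_M(X) \to t_N(t_M(X))$, giving the identification $t_P \simeq t_N \circ t_M$. The order is dictated by the direction of $\mathrm{tr}$.

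Third, for the cotwist I would analyse $RF$ as an endofunctor of $\derived(\ca{B}\sqcup_\varphi\ca{A})$ and take the shifted cone of the unit $\id \to RF$. Block-by-block, the diagonal entries yield $C_M$ and $C_N$ by sphericality of $M$ and $N$ separately; the off-diagonal block is computed as $r_N \circ f_M \simeq \mathrm{RHom}_\ca{C}(M,N)$ and acquires the shift $[-1]$ because $C_P$ is the shifted cone. The structure morphisms $\sigma \circ \mathrm{cmps}$ are then identified as composition of morphisms in Hom-spaces followed by the canonical maps $\sigma$ into $C_M$ and $C_N$.

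The step I expect to be hardest is the sphericality check, and in particular proving invertibility of the twist: although $t_M$ and $t_N$ are autoequivalences individually, the block-triangular construction must produce an invertible functor on the nose, which forces the inverse twist bimodule to assemble coherently out of the adjoint data. A closely related subtlety is checking, at the full bimodule level rather than merely on underlying objects, that $\mathrm{RHom}_\ca{C}(M,N)[-1]$ equipped with its two structure maps $\sigma \circ \mathrm{cmps}$ is a well-defined $(\ca{B}\sqcup_\varphi\ca{A})$-bimodule on both sides and is compatible with the actions coming from $\varphi$.
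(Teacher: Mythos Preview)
Your overall block-triangular strategy is sound and close in spirit to the paper, but the logical architecture differs in an important way, and you have misidentified where the real work lies.

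The paper does \emph{not} verify all four Anno--Logvinenko conditions. Instead it uses the theorem that any two of the four conditions imply the others, and chooses the two cheapest: (i) $t_P$ is an autoequivalence, and (ii) $c_P$ is an autoequivalence. Condition (i) is free once you have identified $t_P \simeq t_N \circ t_M$, since a composite of autoequivalences is an autoequivalence; so your stated worry that ``the block-triangular construction must produce an invertible functor on the nose'' dissolves entirely at that point. The genuine hard step is (ii), proving that the bimodule $C_P$ you wrote down induces an autoequivalence of $\derived(\ca{B}\sqcup_\varphi\ca{A})$. The paper does this by a direct fully-faithfulness argument: using the two SODs of $\derived(\ca{R})$ and the commutative squares relating $c_P$ to $c_M$ and $c_N$ on the pieces, one checks that $c_P$ is fully faithful on morphisms between each pair of semiorthogonal components, and then that its essential image has trivial right orthogonal. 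Your plan does not mention this step at all; merely describing $C_P$ as a matrix is not the same as proving it is invertible.

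Two smaller points. First, you skip the preliminary check that $P$ is $\ca{R}$- and $\ca{C}$-perfect, which is needed so that both adjoints are tensor functors; the paper handles this via the SOD of $\derived(\ca{R}^{\mathrm{opp}})^c$ and perfectness of $\dual{N}{C}$. Second, your route to $t_P \simeq t_N \circ t_M$ via a filtration of $FR(X)$ and the octahedral axiom is different from the paper's: there the diagonal $\ca{R}$-bimodule is replaced by an explicit h-projective resolution built from $\barr{A}$, $\barr{B}$, and $\varphi$, and the cone of the counit then factors on the nose as the bar-tensor product of the two twist bimodules. Your approach should work at the level of objects, but the paper's gives the identification as an isomorphism of $\ca{C}$--$\ca{C}$ bimodules directly, which is what is actually needed.
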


For an explanation of the notation we refer to \autoref{subsect:gluing}, \autoref{subsect:modules-on-glued} and \autoref{dfn-sigma}.

\begin{rmk}
	In \cite[Section 7.2]{AL-P-n-func}, to provide an example of a non-split $\mb{P}^n$-functor, the authors perfomed the above construction in the special case where $M = 0$.
\end{rmk}

As a corollary to the previous theorem, we get the following

\begin{cor}[\autoref{cor:commutativity-relation}]
	With the same setting as in the above theorem, we have $t_N \circ t_M \simeq t_{t_N(M)} \circ t_N$.
\end{cor}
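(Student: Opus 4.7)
The plan is to combine two applications of the main theorem with the classical conjugation formula for spherical twists, namely that if $\Phi$ is an autoequivalence and $F$ is a spherical functor, then $\Phi \circ F$ is spherical with twist $t_{\Phi \circ F} \simeq \Phi \circ t_F \circ \Phi^{-1}$.

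First, I would show that the bimodule $t_N(M) \in \derived(\ca{A}\text{-}\ca{C})$---defined as the cone
\[
t_N(M) = \mathrm{cone}\bigl(N \oL{B} \mathrm{RHom}_\ca{C}(N, M) \xrightarrow{\mathrm{tr}} M\bigr)
\]
---represents the functor $t_N \circ (- \oL{A} M)$. This follows because tensoring over $\ca{A}$ commutes with cones, giving a natural isomorphism $- \oL{A} t_N(M) \simeq t_N \circ (- \oL{A} M)$. Since $t_N$ is an autoequivalence of $\derived(\ca{C})$ (by sphericality of $N$) and $-\oL{A} M$ is spherical, the composite is spherical, whence $t_N(M)$ is a spherical bimodule.

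Next, I would apply the main theorem to the pair $(N, t_N(M))$, with $N \in \derived(\ca{B}\text{-}\ca{C})$ in the role of the ``first'' bimodule and $t_N(M) \in \derived(\ca{A}\text{-}\ca{C})$ in the role of the ``second''. The theorem guarantees that the composition $t_{t_N(M)} \circ t_N$ is realised as the twist around a spherical functor, and in particular is a well-defined autoequivalence of $\derived(\ca{C})$.

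Finally, invoking the conjugation formula with $\Phi = t_N$ and $F = -\oL{A} M$ gives $t_{t_N(M)} \simeq t_N \circ t_M \circ t_N^{-1}$; composing on the right with $t_N$ yields the desired identity $t_N \circ t_M \simeq t_{t_N(M)} \circ t_N$. The main obstacle is the bookkeeping required to verify that $t_N(M)$, defined as above via the trace map, indeed represents the composite spherical functor in the bimodule framework used throughout the paper; the conjugation principle itself is standard, but one must carefully track compatibility with the specific adjunction data (units, counits, and trace morphisms) attached to the spherical bimodules $M$ and $N$.
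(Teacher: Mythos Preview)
Your argument is correct, but it follows a genuinely different and considerably more elementary path than the paper's own proof. Note also that your second invocation of the main theorem (applied to the pair $(N, t_N(M))$) is superfluous: once you know $t_{t_N(M)}$ and $t_N$ are each autoequivalences, their composite is automatically one, and you never use the glued category produced by that application. Your proof therefore does not really use the main theorem at all; it rests entirely on the conjugation principle $t_{\Phi \circ F} \simeq \Phi \circ t_F \circ \Phi^{-1}$ for an autoequivalence $\Phi$ post-composed with a spherical functor $F$, applied with $\Phi = t_N$ and $F = - \oL{A} M$. (The paper proves the dual statement, for pre-composition with an equivalence on the source, in \autoref{lem:twist-cotwist-swapped}; the target-side version you need is proved the same way.)

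The paper instead leverages the full structure of the glued category $\ca{R}$. It uses the description of the cotwist $c_P$ from \autoref{prop:description-cotwist} (specifically the commutative square \eqref{cotwist-restricted-to-a}) together with \autoref{SOD-prop-KL} and \autoref{SOD-prop-mine} to exhibit the chain of semiorthogonal decompositions
\[
\derived(\ca{R}) = \langle c_P(\ind{i_{\ca{B}}}(\derived(\ca{B}))), c_P(\ind{i_{\ca{A}}}(\derived(\ca{A}))) \rangle = \langle \res{i^R_{\ca{A}}}(\derived(\ca{A})), \ind{i_{\ca{B}}}(\derived(\ca{B})) \rangle,
\]
which are exactly the hypotheses of \cite[Theorem~4.14]{Halpern-Shipman16}. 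That theorem then produces a second factorisation $t_P \simeq t_E \circ t_N$, where $E = \ca{A}_{i_{\ca{A}}^R} \ob{R} P$ is the restriction of $P$ along the other embedding of $\derived(\ca{A})$; a direct computation with the structure morphism of $P$ identifies $E \simeq t_N(M)$. Your route is shorter and self-contained; the paper's route situates the corollary inside the Halpern--Leistner--Shipman framework and shows concretely how the alternative SOD of $\derived(\ca{R})$ (arising from the cotwist) yields the alternative factorisation of the twist.
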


Finally, we deal with two explicit examples: spherical objects and $\mb{P}$-objects.
In the case of spherical objects we can prove the following

\begin{thm}[\autoref{thm:cotwist-serre-duality}]
	Let $\ca{C}$ be a small dg-category over a field $k$ such that $\derived(\ca{C})^c$ has a Serre functor, and let $E_1, \dots, E_n \in \derived(\ca{C})$ be $d$-spherical objects.
	Then, the cotwist around the spherical functor realising the composition $t_{E_n} \circ \dots \circ t_{E_1}$ gives a Serre functor up to a shift.
\end{thm}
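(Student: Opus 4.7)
The plan is induction on $n$, repeatedly invoking \autoref{twist-twist=twist} to build the spherical functor and its cotwist. Set $R_1 = k$, with the usual spherical functor $\derived(k) \to \derived(\ca{C})$ sending $k$ to $E_1$; inductively, define $R_n := \mathrm{pt} \sqcup_{\varphi_n} R_{n-1}$, where $M_{n-1} \in \derived(R_{n-1}\text{-}\ca{C})$ is the bimodule realising the $(n-1)$-fold composition and $\varphi_n := \mathrm{RHom}_{\ca{C}}(E_n, M_{n-1})$. A direct unravelling shows that $R_n$ is the upper-triangular subalgebra of $\mathrm{End}_{\ca{C}}(E_1 \oplus \cdots \oplus E_n)$ whose $(i,i)$-entry is $k \cdot \mathrm{id}_{E_i}$ and whose $(i,j)$-entry for $i < j$ is $\mathrm{RHom}_{\ca{C}}(E_j, E_i)$. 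The existence of a Serre functor on $\derived(\ca{C})^c$ forces Hom-finiteness, so $R_n$ is proper; smoothness is inherited inductively through the gluing construction. Hence the Serre functor of $\derived(R_n)^c$ is $-\stackrel{L}{\otimes}_{R_n} R_n^*$, and the aim is to prove a bimodule isomorphism $C_n \simeq R_n^*[-d-1]$.

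The base case $n=1$ is immediate: the cotwist of the spherical functor associated to a $d$-spherical object is $[-d-1]$ (computed from $\mathrm{RHom}_{\ca{C}}(E_1, E_1) = k \oplus k[-d]$), and the Serre functor on $\derived^{\bounded}(k)$ is the identity. For the inductive step, assume the statement for $n-1$, and apply \autoref{twist-twist=twist} with $\ca{A} = R_{n-1}$, $\ca{B} = \mathrm{pt}$, $M = M_{n-1}$ and $N = E_n$; this produces
\[
	C_n \simeq \begin{pmatrix} C_{n-1} & 0 \\ \mathrm{RHom}_{\ca{C}}(M_{n-1}, E_n)[-1] & [-d-1] \end{pmatrix}.
\]
The diagonal entries match those of $R_n^*[-d-1]$ by the inductive hypothesis and the base case. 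For the off-diagonal entry, combine the Serre functor on $\derived(\ca{C})^c$ with the observation (forced by $\mathrm{RHom}_{\ca{C}}(E_i, E_i) = k \oplus k[-d]$ together with Serre duality) that $S(E_i) \simeq E_i[d]$ for each $i$: this yields natural isomorphisms $\mathrm{RHom}_{\ca{C}}(E_i, E_n)^* \simeq \mathrm{RHom}_{\ca{C}}(E_n, E_i)[d]$ for every $i < n$. Assembling these into bimodules gives $\mathrm{RHom}_{\ca{C}}(M_{n-1}, E_n)[-1] \simeq \varphi_n^*[-d-1]$, which is exactly the off-diagonal block of $R_n^*[-d-1]$.

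The main obstacle is to match the bimodule structure morphisms. The cotwist's non-trivial structure maps are the pairings $\sigma \circ \mathrm{cmps}$ produced by \autoref{twist-twist=twist}, while those of $R_n^*$ are the $k$-linear duals of multiplication in $R_n$. Both sides ultimately encode the composition pairings
\[
	\mathrm{RHom}_{\ca{C}}(E_j, E_i) \otimes \mathrm{RHom}_{\ca{C}}(E_k, E_j) \to \mathrm{RHom}_{\ca{C}}(E_k, E_i)
\]
inside $\ca{C}$, and the verification reduces to the statement that Serre duality on $\derived(\ca{C})^c$ intertwines composition with the dual of multiplication. This follows from the functoriality of Serre duality and the explicit form of $\sigma$ given in \autoref{dfn-sigma}; the real care is in bookkeeping the indices and the two distinct off-diagonal contributions at the gluing step, so that the naturality square closes up as a bimodule isomorphism rather than just as an isomorphism of underlying complexes.
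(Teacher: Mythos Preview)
Your approach is essentially the paper's: compute the cotwist via the matrix description of \autoref{twist-twist=twist}, compute $R_n^{\ast}[-d-1]$ as a matrix, and match the two using Serre duality on $\derived(\ca{C})^c$. The paper does the case $n=2$ directly and declares the general case analogous; you organise the same computation as an induction, gluing one spherical object at a time. Both arguments ultimately rest on the same point, namely that the componentwise Serre-duality isomorphisms assemble into a bimodule map (the paper cites \cite[Lemma~7.3]{AL-P-n-func} for this; your functoriality sketch is the same idea).

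One correction: you assert that $S(E_i) \simeq E_i[d]$ is ``forced by $\mathrm{RHom}_{\ca{C}}(E_i,E_i) = k \oplus k[-d]$ together with Serre duality''. This is not so --- Serre duality only yields $\mathrm{RHom}_{\ca{C}}(E_i, S(E_i)) \simeq k \oplus k[d]$, which does not on its own pin down $S(E_i)$ up to isomorphism. The isomorphism $S(E_i) \simeq E_i[d]$ is condition~(3) in the paper's definition of a $d$-spherical object, so it is available as an assumption and you should simply invoke it rather than claim to derive it.
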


We refer the reader to \autoref{subsect:spherical-objs} for a more detailed statement of the above theorem.

\addtocontents{toc}{\SkipTocEntry}
\subsection*{Acknowledgments}

I would like to thank my advisor Ed Segal for many helpful conversations.
I would also like to thank Rina Anno and Timothy Logvinenko for carefully reading the paper and providing useful feedback.
Furthermore, I would like to thank Daniel Halpern-Leistner for asking how the main theorem of this paper related to \cite[Theorem 4.14]{Halpern-Shipman16}, Tobias Dyckerhoff for an interesting discussion that resulted in the proof of \autoref{cor:commutativity-relation}, and the referees whose comments and suggestions incredibly improved the paper and brought to a description of the cotwist in full generality.

This project has received funding from the European Research Council (ERC) under the European Union Horizon 2020 research and innovation programme (grant agreement No.725010).

%
%
%
%
%
\section{Generalities}

In this section we briefly recall the notions and results we need.
Moreover, we prove a few technical results regarding semiorthogonal decompositions of triangulated categories and gluing of dg-categories.
All functors throughout will be covariant and we work over a fixed field $k$.

%
%
%
%
%
%

\subsection{Semiorthogonal decompositions for triangulated categories}

Let $\mcr{T}$ be a triangulated category and $\mcr{F}$, $\mcr{G}$ be two full triangulated subcategories.
We give the following

\begin{dfn}[\cite{BonKap89}]
	\label{def-sod}
	We say that $\mcr{F}$ and $\mcr{G}$ give a SemiOrthogonal Decomposition of $\mcr{T}$ if for any $F \in \mcr{F}$, $G \in \mcr{G}$ we have $\Hom_{\mcr{T}}(G,F) = 0$, and for any $T \in \mcr{T}$ there exists a distinguished triangle
	\begin{equation}
		\label{triangle-sod}
			T_{\mcr{G}} \rightarrow T \rightarrow T_{\mcr{F}},
	\end{equation}
	where $T_{\mcr{F}} \in \mcr{F}$, $T_{\mcr{G}} \in \mcr{G}$.
	In this case, we write $\mcr{T} = \langle \mcr{F}, \mcr{G} \rangle$.
\end{dfn}

\begin{rmk}
	The existence of a SOD implies the existence of a left adjoint for the inclusion $i_{\mcr{F}} \colon \mcr{F} \hookrightarrow \mcr{T}$ and a right adjoint for the inclusion $i_{\mcr{G}} \colon \mcr{G} \hookrightarrow \mcr{T}$.
	We will call these adjoints the projection functors.
\end{rmk}

\begin{dfn}
	A {\it left gluing functor} for a SOD $\mcr{T} = \langle \mcr{F}, \mcr{G} \rangle$ is a functor $\phi \colon \mcr{F} \rightarrow \mcr{G}$ such that there exist natural isomorphisms
	\[
		\begin{array}{lcr}
			\Hom_{\mcr{T}}(F[-1], G) \xrightarrow[\psi_{F,G}]{\simeq} \Hom_{\mcr{G}} (\phi(F), G) & & \forall \, F \in \mcr{F}, \, G \in \mcr{G}.
		\end{array}
	\]
	Similarly, a {\it right gluing functor} for a SOD $\mcr{T} = \langle \mcr{F}, \mcr{G} \rangle$ is a functor $\phi \colon \mcr{G} \rightarrow \mcr{F}$ such that there exist natural isomorphisms
	\[
		\begin{array}{lcr}
			\Hom_{\mcr{T}}(F, G[1]) \xrightarrow[\psi_{F,G}]{\simeq} \Hom_{\mcr{F}} (F, \phi(G)) & & \forall \,  F \in \mcr{F}, \, G \in \mcr{G}.
		\end{array}
	\]
	We will suppress the dependence of $\psi$ on $F$ and $G$ from now on.
\end{dfn}

We will call a triangulated category $\mcr{T}$ closed under arbitrary direct sums {\it cocomplete};
a {\it continuous} functor $F \colon \mcr{T} \rightarrow \mcr{T}'$ between cocomplete triangulated categories is a functor which commutes with arbitrary direct sums.

\begin{dfn}
	\label{dfn:cpt-obj}
	Let $\mcr{T}$ be a cocomplete triangulated category.
	An object $T \in \mcr{T}$ is called \emph{compact} if the canonical morphism
	\[
		\displaystyle{\Hom_{\mcr{T}}(T, \bigoplus_{i} Q_i)} \rightarrow \displaystyle{\bigoplus_i \Hom_{\mcr{T}}(T, Q_i)}
	\]
	is an isomorphism for any family $Q_i$.
	The full subcategory of compact objects is denoted by $\mcr{T}^{c}$.
\end{dfn}

\begin{dfn}
	A \emph{cocomplete subcategory} $\mcr{S}$ of a cocomplete category $\mcr{T}$ is a subcategory which is cocomplete and such that the inclusion functor $i_{\mcr{S}} \colon \mcr{S} \rightarrow \mcr{T}$ is continuous.
\end{dfn}

\begin{rmk}
	There exist subcategories of cocomplete categories which are cocomplete but which are not cocomplete subcategories according to the above definition.
	Examples of such subcategories can be constructed as follows.
	Consider $f \colon X \rightarrow Y$ a map of Noetherian schemes over a field $k$ of characteristic zero such that\footnote{Our functors are implicitly derived.} $f_{\ast}$ does not preserve compactness and $f_{\ast} \ca{O}_X \simeq \ca{O}_Y$, e.g. a resolution of rational singularities.
	Then, $f^{\times}$ is a fully faithful functor because for any $E \in \derived_{\qc}(Y)$ we have\footnote{The functor $R\ca{H}om_{D_{qc}(Y)}(-,-)$ is the internal hom functor for the tensor structure on $D_{qc}(Y)$. In general it is quite hard to compute, but the hypothesis $f_{\ast}\ca{O}_{X} \simeq \ca{O}_Y$ makes it easy in our case.}
	\[
		f_{\ast} f^{\times} E \simeq \mathrm{R}\ca{H}om_{\derived_{\qc}(Y)}(f_{\ast}\ca{O}_{X}, E) \simeq E,
	\]
	see \cite[Remark 6.1.1]{Neeman-Grothendieck-duality} for the first isomorphism.
	In particular, $f^{\times}\derived_{\qc}(Y) \subset \derived_{\qc}(X)$ is a subcategory which is abstractly cocomplete but the inclusion functor cannot preserve infinite direct sums because its left adjoint is $f_{\ast}$, which does not preserve perfectness.
\end{rmk}

Our aim is now to investigate the relation between a SOD and the subcategory of compact objects.

\begin{lem}
	\label{compact-obj-dec}
	Let $\mcr{T}$ be a cocomplete triangulated category and assume we have a semiorthogonal decomposition $\mcr{T} = \langle \mcr{F}, \mcr{G} \rangle$ in terms of cocomplete subcategories.
	Furthermore, assume that there exists a left gluing functor $\phi : \mcr{F} \rightarrow \mcr{G}$.
	If $\phi$ preserves compact objects, then the inclusion and projection functors preserve compactness, and we have $\mcr{T}^{c} = \langle \mcr{F}^{c}, \mcr{G}^{c} \rangle$.
\end{lem}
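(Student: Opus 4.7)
The plan is to show separately that each of the four structure functors $i_{\mcr{F}}, i_{\mcr{G}}, \pi_{\mcr{F}}, \pi_{\mcr{G}}$ preserves compactness, and then read off $\mcr{T}^c = \langle \mcr{F}^c, \mcr{G}^c \rangle$ from the SOD triangle restricted to compact objects.

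First I would observe that both projections are continuous: given a family $\{T_j\} \subset \mcr{T}$, summing the SOD triangles of the $T_j$ and pulling the coproducts inside $i_{\mcr{F}}$ and $i_{\mcr{G}}$ (using the assumption that the inclusions are continuous) produces a triangle whose end terms still lie in $\mcr{G}$ and $\mcr{F}$; by uniqueness of the SOD decomposition this forces $\pi_{\mcr{G}}(\bigoplus T_j) \cong \bigoplus \pi_{\mcr{G}}(T_j)$ and likewise for $\pi_{\mcr{F}}$. Combined with the adjunctions $\pi_{\mcr{F}} \dashv i_{\mcr{F}}$ and $i_{\mcr{G}} \dashv \pi_{\mcr{G}}$, the standard criterion that a left adjoint preserves compactness iff its right adjoint is continuous immediately gives that $\pi_{\mcr{F}}$ and $i_{\mcr{G}}$ preserve compactness. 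Note that neither of these conclusions needs the gluing functor.

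The main obstacle is showing that $i_{\mcr{F}}$ preserves compactness, because $i_{\mcr{F}}$ need not admit a right adjoint to feed into the criterion, and it is here that the hypothesis on $\phi$ enters crucially. For $F \in \mcr{F}^c$ and a family $\{T_j\} \subset \mcr{T}$, I would decompose each $T_j$ through the SOD, take coproducts, and apply $\Hom_{\mcr{T}}(i_{\mcr{F}}(F), -)$. Comparing the resulting long exact sequence with the one coming from the termwise SOD triangles, the five lemma reduces the claim to the two end pieces. The $\mcr{F}$-piece is direct from full-faithfulness of $i_{\mcr{F}}$, compactness of $F$ inside $\mcr{F}$, and coincidence of the two coproducts. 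For the $\mcr{G}$-piece, one application of the defining isomorphism of $\phi$ (after shifting $F$ by one) yields
\[
	\Hom_{\mcr{T}}(i_{\mcr{F}}(F), i_{\mcr{G}}(G)) \cong \Hom_{\mcr{G}}(\phi(F[1]), G),
\]
and the hypothesis that $\phi$ preserves compacts closes this piece.

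Finally, for $\pi_{\mcr{G}}$ and the assembly: if $T \in \mcr{T}^c$, the object $i_{\mcr{F}} \pi_{\mcr{F}}(T)$ is compact by the two previous steps, so the SOD triangle forces $i_{\mcr{G}} \pi_{\mcr{G}}(T) \in \mcr{T}^c$ as well. A fully faithful continuous functor reflects compactness by a one-line check, hence $\pi_{\mcr{G}}(T) \in \mcr{G}^c$. The semiorthogonality relation $\Hom_{\mcr{T}}(i_{\mcr{G}}(G), i_{\mcr{F}}(F)) = 0$ is inherited from $\mcr{T}$, and the SOD triangle of any $T \in \mcr{T}^c$ now lives entirely within $\mcr{T}^c$, yielding the desired $\mcr{T}^c = \langle \mcr{F}^c, \mcr{G}^c \rangle$.
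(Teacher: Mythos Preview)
Your proof is correct and uses the same essential ingredient as the paper --- the identification $\Hom_{\mcr{T}}(i_{\mcr{F}}(F), i_{\mcr{G}}(G)) \cong \Hom_{\mcr{G}}(\phi(F[1]), G)$ together with compactness of $\phi(F[1])$ --- but the packaging differs. The paper argues the biconditional ``$T$ compact $\iff$ $T_{\mcr{F}}$ and $T_{\mcr{G}}$ compact'' directly, applying $\Hom^{\bullet}(-,\bigoplus G_i)$ to the SOD triangle in one direction and $\Hom^{\bullet}(T_{\mcr{F}},-)$, $\Hom^{\bullet}(T_{\mcr{G}},-)$ to the summed triangle in the other; the gluing functor is invoked in both halves. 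Your version instead isolates the four structure functors and observes that the two which are left adjoints ($\pi_{\mcr{F}}$ and $i_{\mcr{G}}$) preserve compactness by the general continuous-right-adjoint criterion, so the gluing hypothesis is only needed once, for $i_{\mcr{F}}$; the remaining functor $\pi_{\mcr{G}}$ then falls out from the triangle and the reflection-of-compactness lemma. This is a cleaner separation of concerns and makes transparent exactly where $\phi$ is required, though the underlying computation is the same.
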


\begin{rmk}
	\label{rmk:proj-commutes-sums}
	Notice that we know that there exists a distinguished triangle
	\[
		i_{\mcr{G}} i_{\mcr{G}}^R \rightarrow \text{id} \rightarrow i_{\mcr{F}} i_{\mcr{F}}^L.
	\]
	In particular, as $i_{\mcr{F}}^L$ is continuous being a left adjoint, and as $i_{\mcr{F}}$ and $i_{\mcr{G}}$ are fully faithful, both projection functors commute with arbitrary direct sums.
\end{rmk}

We introduce the following notation
\[
	\Hom^{\bullet}_{\mcr{T}}(-,-) := \bigoplus_{n \in \mb{Z}} \Hom_{\mcr{T}}(-,-[n])[-n].
\]

\begin{proof}
	For any $T \in \mcr{T}$ consider distinguished triangle \eqref{triangle-sod}.
	We aim to prove that $T$ is compact if and only if $T_{\mcr{F}}$ is compact in $\mcr{F}$ and $T_{\mcr{G}}$ is compact in $\mcr{G}$.
	Assume $T$ is compact, then as $\mcr{G}^{\perp} = \{ T \in \mcr{T} : \Hom_{\mcr{T}}(G, T) = 0 \quad \forall \, G \in \mcr{G} \} = \mcr{F}$ and $\mcr{F}$ is a cocomplete subcategory, we readily obtain that $T_{\mcr{F}}$ is compact.
	Let now $G_{i} \in \mcr{G}$ be a subset of objects, $i \in \ca{I}$.
	If  we apply the functor $\Hom^{\bullet}_{\mcr{T}}(-, \bigoplus_{i \in \ca{I}} G_i)$ to the distinguished triangle \eqref{triangle-sod} associated to $T$, we get the distinguished triangle
	\[
		\displaystyle{\Hom^{\bullet}_{\mcr{T}}(T_{\mcr{F}}, \bigoplus_{i \in \ca{I}} G_i)} \rightarrow \displaystyle{\Hom^{\bullet}_{\mcr{T}}(T, \bigoplus_{i \in \ca{I}} G_i)} \rightarrow \displaystyle{\Hom^{\bullet}_{\mcr{T}}(T_{\mcr{G}}, \bigoplus_{i \in \ca{I}} G_i)}.
	\]
	As $T$ is compact, we have the isomorphism $\Hom^{\bullet}_{\mcr{T}}(T  , \bigoplus_{i \in \ca{I}} G_i) \simeq \bigoplus_{i \in \ca{I}} \Hom^{\bullet}_{\mcr{T}}(T, G_i)$.
	Moreover, as $T_{\mcr{F}}$ is compact, $\mcr{G}$ is a cocomplete subcategory, and $\phi$ preserves compactness, we have
	\[
		\begin{aligned}
			\Hom^{\bullet}_{\mcr{T}}( T_{\mcr{F}} , \bigoplus_{i \in \ca{I}} G_i) &\simeq \Hom^{\bullet}_{\mcr{G}}(\phi(T_{\mcr{F}}[1]), \bigoplus_{i \in \ca{I}} G_{i})\\
			&\simeq \bigoplus_{i \in \ca{I}} \Hom^{\bullet}_{\mcr{G}}(\phi(T_{\mcr{F}}[1]), G_i)\\
			&\simeq \bigoplus_{i \in \ca{I}} \Hom^{\bullet}_{\mcr{T}}(T_{\mcr{F}}, G_i).
		\end{aligned}
	\]
	Therefore, we also have $\Hom^{\bullet}_{\mcr{T}}(T_{\mcr{G}}, \bigoplus_{i \in \ca{I}} G_{i}) \simeq \bigoplus_{i \in \ca{I}} \Hom^{\bullet}_{\mcr{T}}(T_{\mcr{G}}, G_{i})$, \emph{i.e.}, $T_{\mcr{G}}$ is compact.

	Conversely, let us assume that $T_{\mcr{F}}$ and $T_{\mcr{G}}$ are compact.
	Let $T_i$, $i \in \ca{I}$, be objects in $\mcr{T}$, and let $T_{\mcr{F},i}$, $T_{\mcr{G},i}$ be their projections in $\mcr{F}$ and $\mcr{G}$ respectively.
	By \autoref{rmk:proj-commutes-sums}, we have the distinguished triangle
	\begin{equation}
		\label{triangle-direct-sum}
			\displaystyle{\bigoplus_{i \in \ca{I}}} T_{\mcr{G},i} \rightarrow \displaystyle{\bigoplus_{i \in \ca{I}} T_i} \rightarrow \displaystyle{\bigoplus_{i \in \ca{I}} T_{\mcr{F},i}.}
	\end{equation}
	Applying the functor $\Hom^{\bullet}_{\mcr{T}}( T_{\mcr{G}} , -)$ to this triangle and using that $\mcr{G}^{\perp} = \mcr{F}$, we get $\Hom^{\bullet}_{\mcr{T}}(T_{\mcr{G}}, \bigoplus_{i \in \ca{I}} T_i) \simeq \bigoplus_{i \in \ca{I}} \Hom^{\bullet}_{\mcr{T}} ( T_{\mcr{G}}, T_i)$.
	Then, applying the functor $\Hom^{\bullet}_{\mcr{T}}(T_{\mcr{F}}, -)$ to \eqref{triangle-direct-sum} we get
	\[
		\displaystyle{\Hom^{\bullet}_{\mcr{T}}(T_{\mcr{F}}, \bigoplus_{i \in \ca{I}} T_{\mcr{G},i}) \rightarrow \Hom^{\bullet}_{\mcr{T}}(T_{\mcr{F}}, \bigoplus_{i \in \ca{I}} T_i) \rightarrow \Hom^{\bullet}_{\mcr{T}}(T_{\mcr{F}}, \bigoplus_{i \in \ca{I}} T_{\mcr{F},i})}.
	\]

	As $T_{\mcr{F}}$ is compact and $\mcr{F}$ is a cocomplete subcategory, we get $\Hom^{\bullet}_{\mcr{T}} (T_{\mcr{F}},  \bigoplus_{i \in \ca{I}} T_{\mcr{F},i}) \simeq \bigoplus_{i \in \ca{I}} \Hom^{\bullet}_{\mcr{T}}( T_{\mcr{F}}, T_{\mcr{F},i})$.
	Moreover, using that $T_{\mcr{F}}$ is compact, $\mcr{G}$ is a cocomplete subcategory, and $\phi$ preserves compactness, we get the isomorphism $\Hom^{\bullet}_{\mcr{T}}( T_{\mcr{F}},  \bigoplus_{i \in \ca{I}} T_{\mcr{G},i}) \simeq \bigoplus_{i \in \ca{I}} \Hom^{\bullet}_{\mcr{T}}( T_{\mcr{F}}, T_{\mcr{G},i})$.
	Therefore, $\Hom^{\bullet}_{\mcr{T}}(T_{\mcr{F}}, \bigoplus_{i \in \ca{I}} T_i) \simeq \bigoplus_{i \in \ca{I}} \Hom^{\bullet}_{\mcr{T}} ( T_{\mcr{F}}, T_i)$.
	This isomorphism and the one for $T_{\mcr{G}}$ imply $\Hom^{\bullet}_{\mcr{T}}(T, \bigoplus_{i \in \ca{I}} T_i) \simeq \bigoplus_{i \in \ca{I}} \Hom^{\bullet}_{\mcr{T}} ( T, T_i)$, {\it i.e.} $T$ is compact.
\end{proof}

Similarly, one can prove

\begin{lem}
	\label{compact-obj-dec-2}
	Let $\mcr{T}$ be a cocomplete triangulated category and assume we have a semiorthogonal decomposition $\mcr{T} = \langle \mcr{F}, \mcr{G} \rangle$ in terms of cocomplete subcategories.
	Furthermore, assume that there exists a right gluing functor $\phi : \mcr{G} \rightarrow \mcr{F}$.
	If $\phi$ is continuous, then the inclusion and projection functors preserve compactness, and $\mcr{T}^{c} = \langle \mcr{F}^{c}, \mcr{G}^{c} \rangle$.
\end{lem}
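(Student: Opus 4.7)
The proof will follow the same template as that of \autoref{compact-obj-dec}, with the roles of the hypotheses ``$\phi$ preserves compactness'' and ``$\phi$ is continuous'' interchanged. The reason is that for a right gluing functor the image of $\phi$ sits on the right of $\Hom_{\mcr{F}}$ in the isomorphism
\[
	\Hom_{\mcr{T}}(F, G[1]) \simeq \Hom_{\mcr{F}}(F, \phi(G)),
\]
so to commute $\phi$ past an arbitrary direct sum indexed by $\ca{I}$ in $\mcr{G}$ one now needs continuity rather than preservation of compactness.

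For the ``only if'' direction, fix $T$ compact together with its SOD triangle $T_{\mcr{G}} \to T \to T_{\mcr{F}}$. The argument that $T_{\mcr{F}}$ is compact from the proof of \autoref{compact-obj-dec} uses only $\mcr{F} = \mcr{G}^{\perp}$ and $\mcr{F}$ being a cocomplete subcategory, so it applies verbatim. To see that $T_{\mcr{G}}$ is compact I apply $\Hom^{\bullet}_{\mcr{T}}(-, \bigoplus_{i \in \ca{I}} G_i)$ to the SOD triangle for $T$: the middle term splits because $T$ is compact, while the $T_{\mcr{F}}$ term splits via the chain
\[
	\Hom^{\bullet}_{\mcr{T}}(T_{\mcr{F}}, \bigoplus_i G_i) \simeq \Hom^{\bullet}_{\mcr{F}}(T_{\mcr{F}}, \phi(\bigoplus_i G_i[-1])) \simeq \bigoplus_i \Hom^{\bullet}_{\mcr{F}}(T_{\mcr{F}}, \phi(G_i[-1])) \simeq \bigoplus_i \Hom^{\bullet}_{\mcr{T}}(T_{\mcr{F}}, G_i),
\]
where the outer isomorphisms come from the right gluing functor and the middle one uses continuity of $\phi$, compactness of $T_{\mcr{F}}$, and the fact that $\mcr{F}$ is a cocomplete subcategory. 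The five-lemma then yields the splitting for $T_{\mcr{G}}$.

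For the converse, assume $T_{\mcr{F}}$ and $T_{\mcr{G}}$ are compact, take objects $T_i \in \mcr{T}$ with SOD projections $T_{\mcr{F},i}, T_{\mcr{G},i}$, and consider the distinguished triangle $\bigoplus_i T_{\mcr{G},i} \to \bigoplus_i T_i \to \bigoplus_i T_{\mcr{F},i}$ furnished by \autoref{rmk:proj-commutes-sums}. Applying $\Hom^{\bullet}_{\mcr{T}}(T_{\mcr{G}}, -)$ and using $\mcr{G}^{\perp} = \mcr{F}$ splits the direct sum for the $T_{\mcr{G}}$ piece, while applying $\Hom^{\bullet}_{\mcr{T}}(T_{\mcr{F}}, -)$ and using the chain above to handle the $T_{\mcr{G},i}$ term (and compactness of $T_{\mcr{F}}$ together with $\mcr{F}$ being a cocomplete subcategory for the $T_{\mcr{F},i}$ term) splits it for the $T_{\mcr{F}}$ piece. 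Combining the two via the SOD triangle of $T$ shows $T$ is compact. The only mild technicality is that the chain above implicitly uses that $\phi$ commutes with $[-1]$; this is automatic from naturality of the right gluing isomorphism, so I do not foresee any genuine obstacle — the argument is essentially dual to that of \autoref{compact-obj-dec}.
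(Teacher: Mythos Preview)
Your proposal is correct and follows exactly the approach the paper indicates: the paper does not give an explicit proof of this lemma but simply writes ``Similarly, one can prove'' before its statement, meaning the intended argument is precisely the dual of the proof of \autoref{compact-obj-dec} that you have spelled out. Your identification of where continuity of $\phi$ replaces preservation of compactness is the only point requiring care, and you have handled it correctly.
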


\begin{rmk}
	Notice that in general it is not true that a SOD $\mcr{T} = \langle \mcr{F}, \mcr{G} \rangle$ induces a SOD for the subcategory of compact objects.
	For a counterexample, see \autoref{ex:inclusion-doesnt-preserve-cpt}.
\end{rmk}

%
%
%
%
%
%

\subsection{(Modules over) dg-categories}

For what follows, our main reference is \cite{Anno-Log-17}.

As we said in the introduction, our aim is to describe a construction to present the composition of two spherical twists around spherical functors as a single twist.
Even though spherical twists were first introduced as autoequivalences of triangulated categories, see \cite{Seidel-Thomas01}, it turns out that the correct setup for such objects is that of dg-categories.

Let us fix a field $k$.
The category ${\bf Mod}\text{-}k$ will be the category of dg-modules over $k$, {\it i.e.} graded $k$-modules $V = \bigoplus_{n \in \mb{Z}} V^n$ equipped with a $k$-linear endomorphism $d_V$ of degree $1$, called the {\it differential}, such that $d_V^2 = 0$.
For any two dg $k$-modules $(V,d_V)$, $(W,d_W)$, we define the hom space between them as
\[
	\Hom_{{\bf Mod}\text{-}k}((V,d_V), (W,d_W)) = \displaystyle{\bigoplus_{n \geq 0} \Hom^n((V,d_V), (W,d_W))},
\]
where $f \in \Hom^n((V,d_V), (W,d_W))$ is a homomorphism of $k$ vector spaces $f : V \rightarrow W$ such that $f(V^p) \subset W^{p+n}$.
We endow this graded $k$-module with the differential given by
\[
	d \left( \{f_n\}\right) = \{ d_W \circ f_n - (-1)^n f_n \circ d_V \}.
\]
The tensor product of $(V,d_V)$ and $(W,d_W)$ is defined as $(V \otimes_k W)^n = \oplus_{i+j = n} V^i \otimes_k W^j$ with differential $d_V \otimes \text{id} + \text{id} \otimes d_W$.

A dg-category $\ca{A}$ is a category enriched over\footnote{Notice that we do not require a dg-category to be additive.} ${\bf Mod}\text{-}k$, {\it i.e.}, for any $a,a' \in \ca{A}$ the hom space $\Hom_{\ca{A}}(a,a')$ is an object in ${\bf Mod}\text{-}k$, and the composition maps
\[
	\Hom_{\ca{A}}(a',a'') \otimes_k \Hom_{\ca{A}}(a,a') \rightarrow \Hom_{\ca{A}}(a,a'')
\]
are maps of dg $k$-modules for any $a, a', a'' \in \ca{A}$.

A dg functor $F \colon \ca{A} \rightarrow \ca{B}$ between two dg-categories is a functor such that for any $a, a' \in \ca{A}$ the map $\Hom_{\ca{A}}(a,a') \xrightarrow{F} \Hom_{\ca{B}}(F(a), F(a'))$ is a morphism in ${\bf Mod}\text{-}k$.

A right dg-module over $\ca{A}$ is a dg-functor $\ca{A}^{\mathrm{opp}} \rightarrow {\bf Mod}\text{-}k$.
An example of right dg-module is given for any $a \in \ca{A}$ by $h^a(-) = \Hom_{\ca{A}}(-, a)$.
The dg functor $a \mapsto h^{a}$ is fully faithful and it is called the {\it Yoneda embedding}.
For simplicity, we will denote $\Hom_{\stmod{A}}(-,-) = \Hom_{\ca{A}}(-,-)$.
We say that a module $S \in \stmod{A}$ is {\it acyclic} if for every $a \in \ca{A}$ the complex $S(a)$ is acyclic.
The full subcategory of acyclic modules is denoted by $Acycl(\ca{A})$.
We define the derived category of $\ca{A}$ as the Verdier quotient
\[
	\derived(\ca{A}) = H^0(\stmod{A}) \left/ H^0(Acycl(\ca{A})) \right.,
\]
where, given a dg-category $\ca{B}$, $H^0(\ca{B})$ is the category with the same objects as $\ca{B}$ and with morphisms
\[
	\begin{array}{lcr}
		\Hom_{H^0(\ca{B})}(b,b') = H^0 ( \Hom_{\ca{B}}(b,b') ) & & \forall \, b,b' \in H^0(\ca{B}).
	\end{array}
\]

\begin{dfn}
	A module $P \in \stmod{A}$ is called {\it h-projective} if for any $S \in Acycl(\ca{A})$ we have $\mathrm{Hom}_{H^0(\stmod{A})}(P,S) = 0$.
	The full subcategory of h-projective modules is denoted by $\ca{P}(\ca{A})$.
\end{dfn}

\begin{dfn}
	A module $M \in \stmod{A}$ is called {\it perfect} if $M$ is a compact object in $\derived(\ca{A})$.
\end{dfn}

Given two dg-categories $\ca{A}$ and $\ca{B}$, an $\ca{A}\text{-}\ca{B}$ bimodule is a functor $\ca{A} \otimes_{k} \ca{B}^{\mathrm{opp}} \rightarrow {\bf Mod}\text{-}k$, where $\ca{A} \otimes_{k} \ca{B}^{\mathrm{opp}}$ is the tensor product of dg-categories.
We will denote the category of $\ca{A}\text{-}\ca{B}$ bimodules $\bimod{A}{B}$ and its derived category $\derived(\ca{A}\text{-}\ca{B})$.
For a bimodule $M \in \bimod{A}{B}$, we denote the image of $(a,b) \in \ca{A} \otimes_k \ca{B}^{\mathrm{opp}}$ as ${}_a M_b$.

\begin{ex}
	For a dg-category $\ca{A}$, the diagonal bimodule is given by ${}_{a} \ca{A}_{b} = \Hom_{\ca{A}}(b, a)$.
\end{ex}

\begin{dfn}
	A bimodule $M \in \bimod{A}{B}$ is called $\ca{A}$-perfect if for any $b \in \ca{B}$ the module $M_{b}$ is $\ca{A}$-perfect.
	Similarly, we define $\ca{B}$-perfectness and $\ca{A}$ and $\ca{B}$ h-projectivity.
\end{dfn}

\begin{rmk}
	\label{rmk:Keller-h-proj}
	As we are working over a field $k$, for any dg-category $\ca{A}$ the diagonal bimodule has a functorial h-projective resolution $\overline{\ca{A}}$, the {\it bar resolution}, \cite{Kel94}.
	Moreover, any h-projective $\ca{A}\text{-}{\ca{B}}$ bimodule is automatically $\ca{A}$ and $\ca{B}$ h-projective.

	Using the bar resolution one can construct functorial h-projective resolutions using e.g. \cite[Proposition 2.5]{Anno-Log-17}.
\end{rmk}

One can define a tensor product functor $- \oo{B} - : \bimod{A}{B} \otimes_k \bimod{B}{C} \rightarrow \bimod{A}{C}$.
Moreover, using h-projective resolutions, this functor can be derived.
In particular, to any $M \in \bimod{A}{B}$ we can associate a functor $f_M \colon \derived(\ca{A}) \rightarrow \derived(\ca{B})$ by the formula $f_M(S) = S \oL{A} M$.

Given $M \in \stmod{A}$, the $\ca{A}$-dual of $M$ is defined as the $\ca{A}^{\mathrm{opp}}$-module that assigns to any $a \in \ca{A}$ the complex $\Hom_{\ca{A}}(M, {}_a\ca{A})$ and is denoted by $M^{\ca{A}}$.
This construction gives rise to the {\it dualising functor} $\left( \stmod{A} \right)^{\mathrm{opp}} \rightarrow \stmod{A}^{\mathrm{opp}}$, $M \mapsto M^{\ca{A}}$.
We can derive the functor $(-)^{\ca{A}}$ and obtain the {\it derived dualising functor}, which will be denoted by $(-)^{\tilde{\ca{A}}}$.
Notice that these constructions could be performed with bimodules, resulting in bimodules rather than modules.

Given $M \in \bimod{A}{B}$, we have adjunctions
\begin{equation}
\label{underived-adjunctions}
	\begin{array}{lcr}
		- \otimes_{\ca{A}} M \dashv \Hom_{\ca{B}}(M, -) & \mathrm{and} &  M \otimes_{\ca{B}} - \dashv \Hom_{\ca{A}^{\mathrm{opp}}}(M, -)
	\end{array}
\end{equation}
which can be derived to adjunctions
\begin{equation}
\label{derived-adjunctions}
	\begin{array}{lcr}
		- \oL{A} M \dashv \text{RHom}_{\ca{B}}(M, -) & \mathrm{and} &  M \oL{B} - \dashv \text{RHom}_{\ca{A}^{\mathrm{opp}}}(M, -).
	\end{array}
\end{equation}

We will call the unit and counit of adjunctions \eqref{derived-adjunctions} evaluated at the diagional bimodules the {\it derived action map} and {\it derived trace map}, respectively, and given any $N \in \derived(\ca{B})$ we will denote
\[
	\mathrm{tr} : \mathrm{RHom}_{\ca{B}}(M,N) \stackrel{L}{\otimes}_{\ca{A}} M \rightarrow N
\]
the derived trace map of $M$ evaluated at $N$.

In the following, we will need our functors to have left and right adjoints which are still tensor functors.
The theorem below shows that it is enough to restrict our attention to bimodules which are perfect on both sides.
For a proof, see e.g. \cite[Theorem 4.1]{Anno-Logvinenko-Bar-Categories}.

\begin{thm}
	\label{thm:adjoints}
	Let $M \in \bimod{A}{B}$ and consider the induced functor $f_M \colon \derived(\ca{A}) \rightarrow \derived(\ca{B})$.
	Then, the following are equivalent:
	\begin{enumerate}
		\item The right adjoint of $f_M$ exists and is continuous (resp. the left adjoint exists).
		\item The right (resp. left) adjoint functor of $f_M$ is given by $- \oL{B} M^{\widetilde{\ca{B}}}$ (resp. $- \oL{B} M^{\widetilde{\ca{A}}}$).
		\item $M$ is $\ca{B}$- (resp. $\ca{A}$-) perfect.
		\item $f_M$ (resp. $M \oL{A} -$) preserves compactness.
	\end{enumerate}
\end{thm}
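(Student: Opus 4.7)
The plan is to prove the four-way equivalence for the right adjoint statement via a cyclic chain of implications $(2)\Rightarrow(1)\Rightarrow(4)\Rightarrow(3)\Rightarrow(2)$; the left adjoint statement follows by the analogous argument, with $\ca{A}$-perfectness in place of $\ca{B}$-perfectness.

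The first two implications are soft. For $(2)\Rightarrow(1)$, the functor $-\oL{B} M^{\widetilde{\ca{B}}}$ is a derived tensor functor defined on all of $\derived(\ca{B})$, and it is continuous because the derived tensor product commutes with arbitrary direct sums in each variable. For $(1)\Rightarrow(4)$, given a continuous right adjoint $R$ of $f_M$, I would chase the adjunction: for any compact $T\in\derived(\ca{A})$ and any family $\{Q_i\}$ in $\derived(\ca{B})$,
\[
	\Hom(f_M T,\bigoplus_i Q_i)\simeq\Hom(T, R(\bigoplus_i Q_i))\simeq\Hom(T,\bigoplus_i R(Q_i))\simeq\bigoplus_i\Hom(f_M T, Q_i),
\]
using successively the adjunction, continuity of $R$, and compactness of $T$.

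For $(4)\Rightarrow(3)$ I would test compactness on the representable modules $h^a$, $a\in\ca{A}$, which form a set of compact generators of $\derived(\ca{A})$. A direct co-Yoneda computation identifies $f_M(h^a)=h^a\oL{A} M\simeq {}_a M$ with the right $\ca{B}$-module obtained by fixing the first variable of $M$; its compactness in $\derived(\ca{B})$ for every $a\in\ca{A}$ is precisely the statement that $M$ is $\ca{B}$-perfect.

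The main obstacle is $(3)\Rightarrow(2)$, where the right adjoint of $f_M$ must be identified with a concrete tensor functor. I would start from the underived adjunction $-\otimes_{\ca{A}} M\dashv\Hom_{\ca{B}}(M,-)$ of \eqref{underived-adjunctions}, derive it using an h-projective resolution of $M$ (which exists by \autoref{rmk:Keller-h-proj}), and obtain the derived adjunction $-\oL{A} M\dashv\mathrm{RHom}_{\ca{B}}(M,-)$. There is a canonical evaluation morphism of functors $-\oL{B} M^{\widetilde{\ca{B}}}\to\mathrm{RHom}_{\ca{B}}(M,-)$; both sides are exact triangulated functors on $\derived(\ca{B})$, the source is tautologically continuous, and the target is continuous under the standing hypothesis because $f_M$ preserves the compact generators $h^a$ (sending them to the perfect modules ${}_a M$) and hence all compact objects by exactness and thickness, so its right adjoint $\mathrm{RHom}_{\ca{B}}(M,-)$ commutes with direct sums. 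It therefore suffices to check that the evaluation morphism is an isomorphism on the compact generators $h^b$, $b\in\ca{B}$, which reduces to a direct manipulation of the defining formula for $M^{\widetilde{\ca{B}}}$ evaluated at each $a\in\ca{A}$.
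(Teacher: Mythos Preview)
The paper does not actually prove this theorem: it is stated with the remark ``For a proof, see e.g.\ \cite[Theorem 4.1]{Anno-Logvinenko-Bar-Categories}'' and no argument is given. Your cyclic chain $(2)\Rightarrow(1)\Rightarrow(4)\Rightarrow(3)\Rightarrow(2)$ is the standard route and is correct; the key step $(3)\Rightarrow(2)$ via the evaluation morphism and reduction to compact generators is exactly how the cited reference proceeds.

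One small comment: in the implication $(3)\Rightarrow(2)$ you invoke continuity of $\mathrm{RHom}_{\ca{B}}(M,-)$ by first arguing that $f_M$ preserves all compacts. This is in effect re-proving $(3)\Rightarrow(4)$ inside the step, which is fine, but you could also bypass it by observing that both functors $-\oL{B} M^{\widetilde{\ca{B}}}$ and $\mathrm{RHom}_{\ca{B}}(M,-)$, when evaluated at any $N\in\derived(\ca{B})$ and then at $a\in\ca{A}$, yield $\mathrm{RHom}_{\ca{B}}({}_a M,N)$ (the first via the Yoneda identification you mention, the second tautologically); the evaluation map is then an isomorphism objectwise without needing continuity. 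Also, be aware that in the ``resp.'' case the functor in condition (4) should be $M\oL{B}-\colon\derived(\ca{B}^{\mathrm{opp}})\to\derived(\ca{A}^{\mathrm{opp}})$ rather than $M\oL{A}-$ as literally printed; your passing remark that the left-adjoint case is analogous is correct once this is read properly.
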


\subsubsection{Functors between derived categories}

In the previous paragraphs we introduced functors between derived categories of dg-categories.
All of them were constructed using a bimodule and deriving a functor that existed at the level of the category of modules.
We now want to explain two other constructions which will turn out to be useful later on and that let us induce functors between derived categories starting with dg-functors.

\begin{dfn}
\label{dfn:induction-restriction-functors}
	Given a dg-functor $F \colon \ca{A} \rightarrow \ca{B}$, we define 
	\[
		\begin{array}{ll}
			\text{Res}_{F} \colon \stmod{B} \rightarrow \stmod{A}, & M \mapsto \left( a \mapsto M_{F(a)} \right),\\
			\text{Ind}_F \colon \stmod{A} \rightarrow \stmod{B}, & M \mapsto M \otimes_{\ca{A}} {}_{F} \ca{B},
		\end{array}
	\]
	where
	\[
		 {}_{a}({}_{F} \ca{B})_{b} = \Hom_{\ca{B}}(b, F(a)).
	\]
\end{dfn}

The tensor-hom adjunction \eqref{underived-adjunctions} tells us that we have $\text{Ind}_F \dashv \text{Res}_F$.
Moreover, an explicit computation shows $\text{Ind}_F ( h^a) \simeq h^{F(a)}$ for $a \in \ca{A}$.

The functor $\text{Res}_F$ clearly maps acyclic modules to acyclic modules, and therefore descends to a functor between derived categories.
By adjunction, we get that $\text{Ind}_F$ preserves h-projective modules, and that descends to the derived category using h-projective resolutions.

We will write $L\text{Ind}_{F}$ for the derived functor of $\text{Ind}_F$ and $M_F = \text{Res}_F(M)$.
We have

\begin{prop}[$\text{\cite[Proposition 3.9]{KL15}}$]
	\label{prop:KL-ind-res}
	The functor $L\textup{Ind}_{F}$ is left adjoint to the functor $\textup{Res}_F$ and both functors commute with arbitrary direct sums.
	Moreover, we have $L \textup{Ind}_F(h^a) \simeq h^{F(a)}$.

	If $F$ induces a fully faithful functor $H^0(F) \colon H^0(\ca{A}) \rightarrow H^0(\ca{B})$, then $L\textup{Ind}_F$ is fully faithful.
	Finally, if $H^0(F)$ is an equivalence, so is $L\mathrm{Ind}_F$.
\end{prop}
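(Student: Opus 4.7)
The plan is to derive everything from the underived tensor-hom adjunction $\textup{Ind}_F \dashv \textup{Res}_F$ recorded after \autoref{dfn:induction-restriction-functors}.

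First I would observe that $\textup{Res}_F$ is defined pointwise, so it preserves acyclics and trivially commutes with arbitrary direct sums; hence it descends to a functor $\derived(\ca{B}) \to \derived(\ca{A})$. By the underived adjunction, $\textup{Ind}_F$ must then send h-projective modules to h-projective modules: for $P$ h-projective and $S \in Acycl(\ca{B})$, one has $\Hom_{H^0(\stmod{B})}(\textup{Ind}_F P, S) = \Hom_{H^0(\stmod{A})}(P, \textup{Res}_F S) = 0$ because $\textup{Res}_F S$ is acyclic. Thus $L\textup{Ind}_F$ is computed on any h-projective resolution and is left adjoint to $\textup{Res}_F$ on derived categories; as a left adjoint it commutes with arbitrary direct sums.

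For the identification $L\textup{Ind}_F(h^a) \simeq h^{F(a)}$, the key points are that $h^a$ is h-projective (any dg-map $h^a \to S$ is determined by $\mathrm{id}_a \mapsto s \in S(a)$, and for $S$ acyclic the induced chain map is null-homotopic), so $L\textup{Ind}_F(h^a) = \textup{Ind}_F(h^a) = h^a \otimes_{\ca{A}} {}_F\ca{B}$, and then the co-Yoneda lemma yields $(h^a \otimes_{\ca{A}} {}_F\ca{B})(b) = \Hom_{\ca{B}}(b, F(a)) = h^{F(a)}(b)$ naturally in $b$.

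For the fully faithfulness claim, I would verify that the unit $\eta \colon \mathrm{id} \to \textup{Res}_F \circ L\textup{Ind}_F$ is a natural isomorphism. Since both functors are continuous, it suffices to check this on the compact generators $\{h^a\}_{a \in \ca{A}}$, where $\eta_{h^a}$ is the map $\Hom_{\ca{A}}(-, a) \to \Hom_{\ca{B}}(F(-), F(a))$ induced by $F$; translating the $H^0$-fully-faithfulness hypothesis through the shift structure of the derived enhancements gives that this is a quasi-isomorphism, and closure under triangles and direct sums propagates the isomorphism to all of $\derived(\ca{A})$. When $H^0(F)$ is an equivalence, essential surjectivity is then immediate: the essential image of $L\textup{Ind}_F$ contains every representable $h^b$ (via $h^{F(a)} \simeq h^b$) and is a localising subcategory of $\derived(\ca{B})$ containing a generating set, hence equals $\derived(\ca{B})$. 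The subtle point I expect to be the main obstacle is propagating fully faithfulness from the purely $H^0$-level hypothesis about objects of $\ca{A}, \ca{B}$ to a quasi-isomorphism of the underlying hom-complexes; this is precisely the content of \cite[Proposition 3.9]{KL15} and justifies invoking that reference for the detailed verification.
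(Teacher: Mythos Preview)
The paper does not supply a proof; the proposition is simply cited from \cite[Proposition~3.9]{KL15}. Your outline of the adjunction, the preservation of h-projectives, the computation $L\textup{Ind}_F(h^a)\simeq h^{F(a)}$, and the reduction of fully faithfulness to the compact generators is the standard route and agrees with what one finds in that reference.

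There is, however, a real gap in the fully-faithfulness step, and it is not just a matter of missing detail. Your phrase ``translating the $H^0$-fully-faithfulness hypothesis through the shift structure of the derived enhancements'' does not work: you need $\Hom_{\ca{A}}(a,a')\to\Hom_{\ca{B}}(F(a),F(a'))$ to be a quasi-isomorphism, i.e.\ an isomorphism on $H^n$ for every $n$, while the stated hypothesis only controls $H^0$, and the dg-category $\ca{A}$ need not contain shifts of its objects. In fact the hypothesis as literally written is too weak: take $\ca{A}=\ca{B}$ to be the one-object dg-category with endomorphism algebra $k[x]/(x^2)$, $\deg x=1$, $d=0$, and let $F$ be the identity on the object with $x\mapsto 0$; then $H^0(F)$ is the identity functor, yet $L\textup{Ind}_F$ annihilates the nonzero class in $\Hom_{\derived(\ca{A})}(h^{\ast},h^{\ast}[1])$, so $L\textup{Ind}_F$ is not faithful. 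The hypothesis one actually needs (and what is used in \cite{KL15}) is that $F$ be \emph{quasi}-fully-faithful, meaning each map of hom-complexes is a quasi-isomorphism. Every use of the proposition in the present paper is for dg-fully-faithful $F$, so the imprecision in the statement is harmless for the applications here, but your instinct that this is the delicate point was correct for a stronger reason than you indicated.
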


Let us now assume that we have a dg-functor $G \colon \ca{B} \rightarrow \ca{A}$ together with natural isomorphisms of chain complexes
\begin{equation}
	\label{adj-F-G}
	\begin{array}{lcr}
		\Hom_{\ca{B}} (F(a), b) \simeq \Hom_{\ca{A}} (a, G(b)) & & \forall \; a \in \ca{A}, \, b \in \ca{B}.
	\end{array}
\end{equation}
Then, we have

\begin{prop}
	\label{res-res-adjunction}
	We have an adjunction $\textup{Res}_F \dashv \textup{Res}_{G}$ at the level of derived categories.
	In particular, we have $L\textup{Ind}_G \simeq \textup{Res}_F$.
\end{prop}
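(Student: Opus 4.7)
My plan is the following. First, I would observe that both $\mathrm{Res}_F$ and $\mathrm{Res}_G$ are visibly exact at the module level: since they are defined by precomposition with a dg-functor, they preserve acyclic modules and distinguished triangles. Consequently, they descend to triangulated functors on derived categories without any resolutions, and any adjunction established at the module level passes automatically to the derived level. This strategy reduces the proposition to constructing a dg-adjunction $\mathrm{Res}_F \dashv \mathrm{Res}_G$ on $\stmod{B}$ and $\stmod{A}$.

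Next, I would extract the unit and counit for $F \dashv G$ from the dg-isomorphism \eqref{adj-F-G} via dg-Yoneda: setting $b = F(a)$ and evaluating at $\mathrm{id}_{F(a)}$ yields a unit $\eta \colon \mathrm{id}_{\ca{A}} \Rightarrow G \circ F$, and symmetrically a counit $\epsilon \colon F \circ G \Rightarrow \mathrm{id}_{\ca{B}}$. Each component is a closed morphism of degree zero, because the adjunction isomorphism is a chain map and identities are closed of degree zero, and the standard triangle identities hold. For a right $\ca{A}$-module $M$, the component at $a \in \ca{A}$ of the putative counit $\mathrm{Res}_F \mathrm{Res}_G M \to M$ is
\[
    M(\eta_a) \colon M(G(F(a))) \to M(a),
\]
and for a right $\ca{B}$-module $N$, the component at $b \in \ca{B}$ of the putative unit $N \to \mathrm{Res}_G \mathrm{Res}_F N$ is
\[
    N(\epsilon_b) \colon N(b) \to N(F(G(b))).
\]
Naturality in $M$ and $N$, as well as the triangle identities for $\mathrm{Res}_F \dashv \mathrm{Res}_G$, are inherited directly from those for $F \dashv G$ by functoriality, giving a genuine adjunction of dg-functors at the module level.

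Finally, by the first step the adjunction descends to an adjunction $\mathrm{Res}_F \dashv \mathrm{Res}_G$ on derived categories. Since by \autoref{prop:KL-ind-res} we also have $L\mathrm{Ind}_G \dashv \mathrm{Res}_G$, uniqueness of left adjoints yields the isomorphism $L\mathrm{Ind}_G \simeq \mathrm{Res}_F$. I do not anticipate any genuine obstacle: the one point demanding care is confirming that the unit and counit extracted from \eqref{adj-F-G} assemble into closed degree-zero natural transformations of dg-functors (rather than merely pointwise maps satisfying the triangle identities up to homotopy); this is standard dg-enriched Yoneda bookkeeping and, given that \eqref{adj-F-G} is an isomorphism of Hom-complexes rather than of $H^0$-Hom-sets, causes no difficulty.
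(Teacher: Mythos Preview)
Your proposal is correct and takes a genuinely different route from the paper. The paper argues abstractly at the derived level: it invokes Brown representability to assert that $\mathrm{Res}_F$ has \emph{some} right adjoint $H$, and then identifies $H(N) \simeq \mathrm{Res}_G(N)$ by testing against representables $h^b$, using $\mathrm{Res}_F(h^b) \simeq h^{G(b)}$ (which is exactly the hypothesis \eqref{adj-F-G}) together with the Yoneda lemma. Your approach is more direct and more elementary: you build the adjunction $\mathrm{Res}_F \dashv \mathrm{Res}_G$ already at the level of $\stmod{B}$ and $\stmod{A}$ by extracting a genuine dg unit/counit from \eqref{adj-F-G}, and then observe that since both restriction functors are exact the adjunction descends verbatim to derived categories. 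This avoids Brown representability entirely and in fact yields a stronger statement (a dg-enriched adjunction before passing to $H^0$). The paper's argument is shorter to write and sidesteps the enriched bookkeeping you flag at the end, but at the cost of importing a heavier existence theorem; your argument is self-contained and constructive. Both conclude the identification $L\mathrm{Ind}_G \simeq \mathrm{Res}_F$ the same way, via uniqueness of adjoints and \autoref{prop:KL-ind-res}.
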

\begin{proof}
	Notice that the right adjoint to $\text{Res}_{F}$ exists by Brown representability, \cite{Neeman-Brown-Rep}.
	Let us call it $H$.
	Then, for any $b \in \ca{B}$, $N \in \derived(\ca{A})$, we have
	\[
	\begin{aligned}
		\Hom_{\derived(\ca{B})} ( h^{b}, H(N) ) \simeq & \, \Hom_{\derived(\ca{A})} ( \text{Res}_{F}(h^{b}), N) \\
		\simeq & \, \Hom_{\derived(\ca{A})} ( h^{G(b)}, N) \\
		\simeq & \, \Hom_{\derived(\ca{B})}(h^b, N_G),
	\end{aligned}
	\]
	where we used \eqref{adj-F-G} to pass from the first to the second line.
	Therefore, we get an isomorphism $H(N) \simeq N_G$ from the Yoneda lemma.
	The second statement follows from the uniqueness of the adjoint.
\end{proof}

\subsection{Bar categories}

In this section we introduce the formalism of bar categories as developed in \cite{Anno-Logvinenko-Bar-Categories}.
We will recall only the basic notions we will need, and we refer to {\it ibidem} and references therein for a thorough treatment.

Let us fix $\ca{A}$ a small dg-category.
Then, to $\ca{A}$ we can associate its {\it bar complex} $\barr{A} \in \bimod{A}{A}$ as defined in \cite[Definition 2.24]{Anno-Logvinenko-Bar-Categories}.
The bar complex is an h-projective bimodule which can be equipped with the structure of a unital coalgebra in the monoidal category $(\bimod{A}{A}, \otimes_{\ca{A}}, \ca{A})$, see \cite[Proposition 2.33]{Anno-Logvinenko-Bar-Categories}.
We will denote the counit and the comultiplication as 
\begin{equation}
	\label{projection-comultiplication}
	\begin{array}{lcr}
		\barr{A} \xrightarrow{\tau} \ca{A} & \mathrm{and} & \barr{A} \xrightarrow{\Delta} \barr{A} \otimes_{\ca{A}} \barr{A},
	\end{array}
\end{equation}
and we will sometime refer to $\tau$ as the projection morphism.

It is well known that the counit $\tau$ is a quasi-isomorphism, and therefore $\barr{A}$ gives an h-projective resolution of the diagonal bimodule, see \cite{Kel94}.

The following is \cite[Definition 3.2]{Anno-Logvinenko-Bar-Categories}.

\begin{dfn}
	The {\it bar category of modules} $\barmod{A}$ is defined as follows:
	\begin{itemize}
		\item The objects are given by dg-modules over $\ca{A}$.
		\item For any $E, F \in \stmod{A}$ set
		\[
			\mathrm{H}\overline{\mathrm{om}}_{\ca{A}}(E,F) := \mathrm{Hom}_{\barmod{A}}(E,F) := \mathrm{Hom}_{\ca{A}}(E \otimes_{\ca{A}} \barr{A}, F). 
		\]
		\item For any $E \in \stmod{A}$ we set $\mathrm{id}_E \in \overline{\mathrm{Hom}}_{\ca{A}}(E,E)$ to be
		\[
			E \otimes_{\ca{A}} \barr{A} \xrightarrow{\mathrm{id} \otimes \tau} E \otimes_{\ca{A}} \ca{A} \xrightarrow{\simeq} E
		\]
		\item For any $E,F,G \in \stmod{A}$ the composition of $E \otimes_{\ca{A}} \barr{A} \xrightarrow{\alpha} F$ and $F \otimes_{\ca{A}} \barr{A} \xrightarrow{\beta} G$ is the element given by
		\[
			E \otimes_{\ca{A}} \barr{A} \xrightarrow{\mathrm{id} \otimes \Delta} E \otimes_{\ca{A}} \barr{A} \otimes_{\ca{A}} \barr{A} \xrightarrow{\alpha \otimes \mathrm{id}} F \otimes_{\ca{A}} \barr{A} \xrightarrow{\beta} G.
		\]
	\end{itemize}
\end{dfn}

Similarly one can define the bar category of bimodules $\barbimod{A}{B}$ for two small dg-categories $\ca{A}$, $\ca{B}$.

The following is \cite[Corollary 3.6]{Anno-Logvinenko-Bar-Categories}.

\begin{thm}
	\label{thm:enhancement}
	There is a canonical category isomorphism
	\[
		\Theta : \derived(\ca{A}) \xrightarrow{\simeq} H^0 \left( \barmod{A} \right)
	\]
	giving $\barmod{A}$ the structure of a dg-enhancement of $\derived(\ca{A})$.
\end{thm}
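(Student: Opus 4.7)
The plan is to take $\Theta$ to be the identity on objects and, on morphisms, to send a class $[\alpha] \in H^0\overline{\mathrm{Hom}}_{\ca{A}}(E,F) = H^0 \mathrm{Hom}_{\ca{A}}(E\otimes_{\ca{A}}\barr{A}, F)$ to the roof
\[
E \xleftarrow{\mathrm{id}\otimes\tau} E\otimes_{\ca{A}}\barr{A} \xrightarrow{\alpha} F
\]
in $\derived(\ca{A})$. Since $\tau$ is a quasi-isomorphism so is $\mathrm{id}\otimes\tau$, which makes this roof a genuine morphism in $\derived(\ca{A})$, and passing to $H^0$ absorbs the homotopy ambiguity in $\alpha$.

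First I would check functoriality. The identity $\mathrm{id}_E = \mathrm{id}\otimes\tau$ of $\barmod{A}$ is sent to $E \xleftarrow{\mathrm{id}\otimes\tau} E\otimes_{\ca{A}}\barr{A} \xrightarrow{\mathrm{id}\otimes\tau} E$, which is the identity of $E$ in $\derived(\ca{A})$. For composition, the bar composition $\beta \circ_{\mathrm{bar}} \alpha$ of $\alpha : E\otimes_{\ca{A}}\barr{A}\to F$ and $\beta : F\otimes_{\ca{A}}\barr{A}\to G$ is $\beta \circ (\alpha \otimes \mathrm{id}) \circ (\mathrm{id}\otimes\Delta)$. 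To match this with the concatenation of the roofs for $\Theta(\alpha)$ and $\Theta(\beta)$, I would observe that $(\alpha\otimes\mathrm{id})\circ(\mathrm{id}\otimes\Delta) : E\otimes_{\ca{A}}\barr{A} \to F\otimes_{\ca{A}}\barr{A}$ is a valid lift of $\alpha$ along $\mathrm{id}\otimes\tau$, precisely because of the counit axiom $(\mathrm{id}\otimes\tau)\circ\Delta = \mathrm{id}_{\barr{A}}$ for the coalgebra structure \eqref{projection-comultiplication}. Postcomposing this lift with $\beta$ yields $\beta \circ_{\mathrm{bar}} \alpha$, which is exactly the right leg of the concatenated roof.

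Next I would establish full faithfulness. Since $\barr{A}$ is semi-free as a right $\ca{A}$-module, $E\otimes_{\ca{A}}\barr{A}$ is h-projective as a right $\ca{A}$-module for every $E \in \stmod{A}$; this is the content of Keller's bar resolution recalled in \autoref{rmk:Keller-h-proj}. Consequently
\[
H^0 \mathrm{Hom}_{\ca{A}}(E\otimes_{\ca{A}}\barr{A}, F) \simeq \mathrm{Hom}_{\derived(\ca{A})}(E\otimes_{\ca{A}}\barr{A}, F),
\]
and precomposition with the quasi-isomorphism $\mathrm{id}\otimes\tau$ further identifies the target with $\mathrm{Hom}_{\derived(\ca{A})}(E,F)$. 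Unwinding these identifications shows that the resulting bijection agrees with $\Theta$, so $\Theta$ is fully faithful; being the identity on objects, it is then an isomorphism of categories.

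The main obstacle I anticipate is the composition check: verifying that the $\Delta$-twisted composition in $\barmod{A}$ implements roof concatenation in $\derived(\ca{A})$. This is the one place where the coalgebra structure on $\barr{A}$ does non-trivial work; once the counit axiom is invoked the argument becomes formal, and the remaining identification of derived Homs with homotopy classes of module maps out of $E\otimes_{\ca{A}}\barr{A}$ is standard.
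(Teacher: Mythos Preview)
The paper does not prove this statement; it merely records it as \cite[Corollary 3.6]{Anno-Logvinenko-Bar-Categories}. Your sketch is a faithful reconstruction of the standard argument from that reference: use that $E\otimes_{\ca{A}}\barr{A}$ is an h-projective resolution of $E$ (by the semi-freeness of the bar complex over a field, cf.\ \autoref{rmk:Keller-h-proj}) to identify $H^0\mathrm{Hom}_{\ca{A}}(E\otimes_{\ca{A}}\barr{A},F)$ with $\mathrm{Hom}_{\derived(\ca{A})}(E,F)$, and verify that the coalgebra counit axiom $(\mathrm{id}\otimes\tau)\circ\Delta=\mathrm{id}_{\barr{A}}$ makes the bar composition match roof concatenation.

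Two small remarks. First, the theorem names $\Theta$ as a functor $\derived(\ca{A})\to H^0(\barmod{A})$, whereas you construct the map in the opposite direction; since you show it is bijective on objects and Hom sets this is harmless, but you may wish to say explicitly that you are building $\Theta^{-1}$. Second, your claim that $E\otimes_{\ca{A}}\barr{A}$ is h-projective for \emph{every} $E$ (not just h-projective $E$) is correct and is the crux of the argument, but it deserves one line of justification: the underlying graded right $\ca{A}$-module of $E\otimes_{\ca{A}}\barr{A}$ is a direct sum of modules of the form $V\otimes_k h^a$ with $V\in{\bf Mod}\text{-}k$, and the bar differential makes it a one-sided twisted complex over such free modules, hence semi-free and in particular h-projective.
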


Let us now take three small dg-categories $\ca{A}$, $\ca{B}$, $\ca{C}$.
Then, one can define dg-functors
\[
	\renewcommand{\arraystretch}{1.5}
	\begin{array}{c}
		\ob{\ca{B}} : \barbimod{A}{B} \otimes_k \barbimod{B}{C} \rightarrow \barbimod{A}{C}\\
		\mathrm{H}\overline{\mathrm{om}}_{\ca{B}}(-,-) : \barbimod{A}{B} \otimes_k \left( \barbimod{C}{B} \right)^{\mathrm{opp}} \rightarrow \barbimod{A}{C}
	\end{array}
\]
as per \cite[Definition 3.9, 3.10]{Anno-Logvinenko-Bar-Categories} by setting
\[
	\begin{array}{lcr}
		M \ob{\ca{B}} N = M \otimes_{\ca{B}} \barr{B} \otimes_{\ca{B}} N & \mathrm{and} & 
		\mathrm{H}\overline{\mathrm{om}}_{\ca{B}}(M,N') = \mathrm{Hom}_{\ca{B}}(M \otimes_{\ca{B}} \barr{B}, N')
	\end{array}
\]
for $M \in \barbimod{A}{B}$, $N \in \barbimod{B}{C}$, and $N' \in \barbimod{C}{B}$.

For a fixed $M \in \barbimod{A}{B}$ the functors $(- \ob{A} M, \mathrm{H} \overline{\mathrm{om}}_{\ca{B}}(M, -))$ form an adjoint pair $\barmod{A} \leftrightarrow \barmod{B}$, and similarly for $(M \ob{B} - , \mathrm{H} \overline{\mathrm{om}}_{\ca{A^{\mathrm{opp}}}}(M, -))$, see \cite[Proposition 3.14]{Anno-Logvinenko-Bar-Categories}.
We now give an explicit description of the adjunction units evaluated at the diagonal bimodules $\ca{A}$ and $\ca{B}$, respectively.
We first define the following maps of $\barbimod{A}{B}$ bimodules
\begin{equation}
	\label{A-map}
	\barr{A} \oo{A} \barr{A} \oo{A} M \oo{B} \barr{B} \xrightarrow{\tau \oo{} \tau \oo{} \mathrm{id} \oo{} \tau} M
\end{equation}
\begin{equation}
	\label{B-map}
	\barr{A} \oo{A} M \oo{B} \barr{B} \oo{B} \barr{B} \xrightarrow{\tau \oo{} \mathrm{id} \oo{} \tau \oo{} \tau} M
\end{equation}
Then, the adjunction units evaluated at the diagonal bimodules are given by 
\begin{equation}
	\label{act-a}
	\barr{A} \oo{A} \barr{A} \xrightarrow{\mathrm{mult}} \mathrm{Hom}_{\ca{B}}(M \oo{B} \barr{B}, \barr{A} \oo{A} \barr{A} \oo{A} M \oo{B} \barr{B} ) \xrightarrow{\eqref{A-map} \circ (-)} \mathrm{H} \overline{\mathrm{om}}_{\ca{B}}(M,M)
\end{equation}
\begin{equation}
	\label{act-b}
	\barr{B} \oo{B} \barr{B} \xrightarrow{\mathrm{mult}} \mathrm{Hom}_{\ca{A}^{\mathrm{opp}}}(\barr{A} \oo{A} M, \barr{A} \oo{A} M \oo{B} \barr{B} \oo{B} \barr{B}) \xrightarrow{\eqref{B-map} \circ (-)} \mathrm{H} \overline{\mathrm{om}}_{\ca{A}^{\mathrm{opp}}}(M,M).
\end{equation}

We define the {\it dualising functors} $\barbimod{A}{B} \rightarrow \left( \barbimod{B}{A} \right)^{\mathrm{opp}}$ as
\[
	\begin{array}{lcr}
		\dual{(-)}{A} := \mathrm{H} \overline{\mathrm{om}}_{\ca{A}^{\mathrm{opp}}}(-, \ca{A})
		& \mathrm{and} &
		\dual{(-)}{B} := \mathrm{H} \overline{\mathrm{om}}_{\ca{B}}(-, \ca{B}),
	\end{array}
\]
see \cite[Definition 3.31]{Anno-Logvinenko-Bar-Categories}.
As showed in \cite[Definition 3.35]{Anno-Logvinenko-Bar-Categories}, one can construct natural transformations
\begin{equation}
	\label{nat-transf-bar-dual-1}
	\dual{M}{A} \ob{A} (-) \xrightarrow{\eta_{\ca{A}}} \mathrm{H} \overline{\mathrm{om}}_{\ca{A}^{\mathrm{opp}}}(M,-)
\end{equation}
\begin{equation}
	\label{nat-transf-bar-dual-2}
	(-) \ob{\ca{B}} \dual{M}{B} \xrightarrow{\eta_{\ca{B}}} \mathrm{H} \overline{\mathrm{om}}_{\ca{B}}(M,-)
\end{equation}
such that \eqref{nat-transf-bar-dual-1} (resp. \eqref{nat-transf-bar-dual-2}) is an homotopy equivalence if and only if $M$ is $\ca{A}$- (resp. $\ca{B}$-) perfect, see \cite[Lemma 3.36]{Anno-Logvinenko-Bar-Categories}.

\subsection{Adjunctions}

It is easy to see that the functors $\dual{(-)}{A}$ and $\dual{(-)}{B}$ are self adjoint, see e.g. \cite[§ 3.5]{Anno-Logvinenko-Bar-Categories}.
Hence, we get natural transformations $\mathrm{id} \rightarrow (-)^{\barr{A}\,\barr{A}}$ and $\mathrm{id} \rightarrow (-)^{\barr{B}\,\barr{B}}$ which are homotopy equivalences when evaluated on $\ca{A}$- and $\ca{B}$- perfect bimodules, respectively, see \cite[Lemma 3.32]{Anno-Logvinenko-Bar-Categories}.

Using \eqref{nat-transf-bar-dual-1} and \eqref{nat-transf-bar-dual-2} we get that for $M \in \barbimod{A}{B}$ an $\ca{A}$- and $\ca{B}$-perfect bimodule  the following form homotopy adjoint pair of functors
\begin{equation}
	\label{tensor-hom-adj}
	\begin{array}{lcr}
		\left( - \ob{A} M, - \ob{B} \dual{M}{B} \right),
		& & 
		\left( - \ob{B} \dual{M}{A}, - \ob{A} M \right).
	\end{array}
\end{equation}

We now perform the same choices as in \cite[§ 4.2]{Anno-Logvinenko-Bar-Categories}.
We refer the reader to {\it ibidem} for the complete explanation, and here we recall only the definitions we need.

\begin{dfn}
	Fix $M \in \barbimod{A}{B}$ an $\ca{A}$- and $\ca{B}$-perfect bimodule.
	\begin{itemize}
		\item The {\it homotopy trace maps}
		\begin{equation}
			\label{trace-maps}
			\begin{array}{lcr}
				M \ob{B} \dual{M}{A} \xrightarrow{\mathrm{tr}} \ca{A} & & \dual{M}{B} \ob{A} M \xrightarrow{\mathrm{tr}} \ca{B}
			\end{array}
		\end{equation}
		are the counit of the adjunctions \eqref{tensor-hom-adj} evaluated on the diagonal bimodules.
		\item We fix a homotopy inverse
		\begin{equation*}
			\mathrm{H} \overline{\mathrm{om}}_{\ca{A}^{\mathrm{opp}}}(M,M) \xrightarrow{\zeta_{\ca{A}}} \dual{M}{A} \ob{A} M
		\end{equation*}
		to \eqref{nat-transf-bar-dual-1}, and a homotopy inverse
		\begin{equation*}
			\mathrm{H} \overline{\mathrm{om}}_{\ca{B}}(M,M) \xrightarrow{\zeta_{\ca{B}}} M \ob{B} \dual{M}{B}
		\end{equation*}
		to \eqref{nat-transf-bar-dual-2}.
		Then, we define the {\it homotopy $\ca{A}$-action (resp. $\ca{B}$-action) map} as
		\begin{equation}
			\label{act-map-A}
				\mathrm{act}: \ca{A} \xrightarrow{\eqref{act-a}} \mathrm{H} \overline{\mathrm{om}}_{\ca{B}}(M,M) \xrightarrow{\zeta_{\ca{B}}} M \ob{B} \dual{M}{B}
		\end{equation}
		\begin{equation}
			\label{act-map-B}
				\mathrm{act}: \ca{B} \xrightarrow{\eqref{act-b}} \mathrm{H} \overline{\mathrm{om}}_{\ca{A}^{\mathrm{opp}}}(M,M) \xrightarrow{\zeta_{\ca{A}}} \dual{M}{A} \ob{A} M
		\end{equation}
	\end{itemize}
\end{dfn}

By \cite[Proposition 3.12]{Anno-Logvinenko-Bar-Categories} we know that the functor $- \ob{A} M$ (resp. $- \ob{B} \dual{M}{B}$, $- \ob{B} \dual{M}{A}$) enhances the functor $- \oL{A} M$ (resp. $- \oL{B} M^{\widetilde{\ca{B}}}, - \oL{B} M^{\widetilde{\ca{A}}}$) via the equivalence $\Theta$ of \autoref{thm:enhancement}.
The maps \eqref{trace-maps}, \eqref{act-map-A}, \eqref{act-map-B} are our fixed lifts at the homotopy level of the counits and units, respectively, of the adjunctions at the level of derived categories.
As the adjunctions \eqref{tensor-hom-adj} are only homotopy adjunctions, the standard relations between units and counits hold only up to homotopy.
However, these homotopies can be fixed together with the lifts \eqref{trace-maps}, \eqref{act-map-A}, \eqref{act-map-B}, see \cite[Proposition 4.6]{Anno-Logvinenko-Bar-Categories}.

\subsection{Gluing of dg-categories}
\label{subsect:gluing}

In this subsection we describe the notion of gluing of dg-categories.

There are two definitions of gluing in the literature: one for general dg-categories \cite{Tabuada-dgCAT}, and one that works best when the dg-categories are additive \cite{KL15}.

As we will be interested in the derived category of the gluing, choosing either model does not make a difference for us.
However, we have to balance two facts: that we want our results to hold for general dg-categories, and that working with the additive gluing is easier.
In \autoref{appendix} we will prove that it is not restrictive to assume that the dg-categories we work with are additive, thus solving this issue.

\subsubsection{The first gluing}

Let us take $\ca{A}$ and $\ca{B}$ two small dg-categories, and let $\varphi \in \bimod{A}{B}$ be a bimodule.
Following \cite{Tabuada-07}, we define the upper triangular dg-category associated to this datum as the dg-category $\ca{B} \sqcup_{\varphi} \ca{A}$ whose objects are
\[
	Obj(\ca{B} \sqcup_{\varphi} \ca{A}) = Obj(\ca{B}) \sqcup Obj(\ca{A}),
\]
and with complexes of morphisms
\[
	\Hom_{\ca{B} \sqcup_{\varphi} \ca{A}}(x,y) = \left\{
	\begin{array}{lc}
		\Hom_{\ca{B}}(x,y) & x,y \in \ca{B}\\
		\Hom_{\ca{A}}(x,y) & x,y \in \ca{A}\\
		{}_y \varphi_{x} & y \in \ca{A}, x \in \ca{B}\\
		0 & \text{otherwise}
	\end{array}
	\right..
\]
The grading, the differential, and the composition are defined in the obvious way.

In the following, we will denote 
\begin{equation}
	\label{eqn:inclusion-easy-gluing}
	\begin{array}{lcr}
		i_{\ca{B}} \colon \ca{B} \hookrightarrow \ca{B} \sqcup_{\varphi} \ca{A} & \mathrm{and} & i_{\ca{A}} \colon \ca{A} \hookrightarrow \ca{B} \sqcup_{\varphi} \ca{A}
	\end{array}
\end{equation}
the embedding functors.

\begin{ex}
	\label{ex:upper-triangular-dg-algebra}
	Let $A$ and $B$ two dg-algebras and $V$ an $A\text{-}B$ bimodule.
	Then, the upper triangular dg-algebra associated to this datum is the dg-algebra
	\[
		R = \bimodule{A}{V}{0}{B}
	\]
	with componentwise grading and differential, and composition law given by
	\[
		\bimodule{a}{v}{0}{b} \cdot \bimodule{a'}{v'}{0}{b'} = \bimodule{aa'}{av' + vb'}{0}{bb'}.
	\]

	If we think of $A$ and $B$ as dg-categories $\star_A$ and $\star_B$ with one object and endomorphism dg-algebra $A$ and $B$, respectively, then $V$ is a $\star_A\text{-}\star_B$ bimodule and ${\bf Mod}\text{-} \left( \star_B \sqcup_{V} \star_A \right) \simeq {\bf Mod}\text{-}(\star_R)$.
\end{ex}

\subsubsection{The additive gluing}
\label{subsubsection:gluing-dg}

Let us now take two small, additive dg-categories $\ca{A}$, $\ca{B}$ and an $\ca{A}\text{-}\ca{B}$ bimodule $\varphi$.
Following \cite[§ 4]{KL15} we define the gluing of $\ca{A}$ and $\ca{B}$ along $\varphi$, and we denote it by $\ca{B} \times_{\varphi} \ca{A}$, as follows: its objects are given by triples $(b, a, \mu)$ where
\[
	\begin{array}{ccc}
		b \in \ca{B}, & a \in \ca{A}, & \mu \in Z^{0} \left( {}_a\varphi_b\right),
	\end{array}
\]
and the morphisms are given by (here we set $r = (b, a, \mu)$, $r' = (b', a', \mu')$)
\[
	\Hom_{\ca{B} \times_{\varphi} \ca{A}} ( r, r' )
	= \Hom_{\ca{B}}( b, b') \oplus \Hom_{\ca{A}}( a, a') \oplus {}_{a'}\varphi_b [-1],
\]
with a suitable choice of differential and composition law described in \cite[§ 4.1]{KL15}.

We will denote
\begin{equation}
	\label{embedding-gluing}
	\begin{array}{llcrr}
		i_{\ca{B}} : \ca{B} \rightarrow \ca{R}, & b \mapsto (b, 0, 0) & \mathrm{and} & i_{\ca{A}} : \ca{A} \rightarrow \ca{R}, &  a \mapsto (0, a, 0)\\
	\end{array}
\end{equation}
the embedding functors, and
\begin{equation}
	\label{adjoint-embeddings-gluing}
	\begin{array}{llcrr}
		i_{\ca{B}}^{L} : \ca{R} \longrightarrow \ca{B}, & (b, a, \mu) \mapsto b & \mathrm{and} & i_{\ca{A}}^{R} : \ca{R} \longrightarrow \ca{A}, & (b, a, \mu) \mapsto a\\
	\end{array}
\end{equation}
their left and right adjoint, respectively.

\subsubsection{Relations between the two definitions}
\label{subsub:relations}

If $\ca{A}$ and $\ca{B}$ are small, additive dg-categories, we can perfom both the constructions we described in the previous subsections and obtain two new dg-categories: $\ca{B} \sqcup_{\varphi} \ca{A}$, $\ca{B} \times_{\varphi} \ca{A}$.

The relation between the two is as follows: the category $\ca{B} \sqcup_{\varphi} \ca{A}$ can be identified with the full subcategory of $\ca{B} \times_{\varphi[1]} \ca{A}$ of objects of the form $(b, 0, 0)$, $(0, a, 0)$.
Moreover, the fully faithful functor $F : \ca{B} \sqcup_{\varphi} \ca{A} \hookrightarrow \ca{B} \times_{\varphi[1]} \ca{A}$ induces an equivalence of derived categories $\derived(\ca{B} \sqcup_{\varphi} \ca{A}) \xrightarrow[\simeq]{\ind{F}} \derived(\ca{B} \times_{\varphi[1]} \ca{A})$.
Indeed, we obtain fully faithfulness from \autoref{prop:KL-ind-res}, and then we get essential surjectivity by continuity and the fact that the essential image of $\ind{F}$ contains the set of compact generators given by modules of the form $h^{(b,0,0)}$, $h^{(0,a,0)}$.

The fact that the two gluings have the same derived category is what really matters to us.
In \autoref{appendix} we will investigate the derived category of $\ca{B} \sqcup_{\varphi} \ca{A}$ in the case when $\ca{A}$ and $\ca{B}$ are not additive.

\subsection{Semiorthogonal decompositions for glued dg-categories}

Let $\ca{A}$ and $\ca{B}$ two small, additive dg-categories and $\varphi \in \bimod{A}{B}$.
We denote $\ca{R} := \ca{B} \times_{\varphi} \ca{A}$ the gluing defined in \autoref{subsubsection:gluing-dg} and 
\[
	\renewcommand*{\arraystretch}{1.2}
	\begin{array}{llcrr}
		I_1 = L\text{Ind}_{i_{\ca{B}}}, & I_2 = L\text{Ind}_{i_\ca{A}} & \mathrm{and} & I_1^R = \text{Res}_{i_{\ca{B}}}, & I_2^R = \text{Res}_{i_{\ca{A}}}
	\end{array}
\]
where $i_{\ca{A}}$ and $i_{\ca{B}}$ are defined in \eqref{embedding-gluing}.

By \autoref{prop:KL-ind-res} the functors $I_1$ and $I_2$ are fully faithful.
We use them to embed the categories $\derived(\ca{B})$ and $\derived(\ca{A})$ in $\derived(\ca{R})$.
We have

\begin{prop}[$\text{\cite[Proposition 4.6]{KL15}}$]
	\label{SOD-prop-KL}
	There exists a semiorthogonal decomposition
	\begin{equation}
		\label{SOD-KL}
		\derived(\ca{R}) = \langle I_1(\derived(\ca{B})), I_2(\derived(\ca{A})) \rangle
	\end{equation}
	with right gluing functor given by $- \stackrel{L}{\otimes}_{\ca{A}} \varphi: \derived(\ca{A}) \rightarrow \derived(\ca{B})$.
	Moreover, for any $F \in \derived(\ca{R})$ there is a distinguished triangle
	\begin{equation}
		\label{dt-sod-KL}
			I_1^R(F) \rightarrow I^{L}_1 F \rightarrow I^R_2(F) \oL{A} \varphi,
	\end{equation}
	where $I^{L}_{1}$ is the left adjoint of $I_1$.
\end{prop}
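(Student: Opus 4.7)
The plan is to establish the proposition in three stages: semiorthogonality, existence of the decomposition triangle (which together give the SOD), and the identification of the right gluing functor (after which the stated triangle follows by a short computation). Full faithfulness of $I_1$ and $I_2$ is already given by Proposition \ref{prop:KL-ind-res}. For semiorthogonality, I would show $I_2^R I_1 = 0$: both functors are continuous, so it suffices to check on the compact generators $h^b$; by Proposition \ref{prop:KL-ind-res} one has $I_1(h^b) = h^{(b,0,0)}$, and its restriction to $\ca{A}$ evaluated at $a$ is $\Hom_{\ca{R}}((0,a,0),(b,0,0)) = 0$ by inspection of the off-diagonal block of the morphism complex in the gluing.

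To produce the decomposition triangle for $F \in \derived(\ca{R})$, I would start with the counit $\epsilon_F \colon I_2 I_2^R F \to F$ of the adjunction $I_2 \dashv I_2^R$ (Proposition \ref{prop:KL-ind-res}) and extend it to a distinguished triangle $I_2 I_2^R F \to F \to C_F$. Full faithfulness of $I_2$ combined with the triangle identities forces $I_2^R \epsilon_F$ to be an isomorphism, hence $I_2^R C_F = 0$, placing $C_F$ in the right orthogonal of $I_2(\derived(\ca{A}))$. To conclude $C_F \in I_1(\derived(\ca{B}))$ I would argue that $\derived(\ca{R})$ is generated, as a localising subcategory, by the essential images of $I_1$ and $I_2$: indeed, each compact generator $h^{(b, a, \mu)}$ fits in a distinguished triangle
\[
	I_2(h^a) \to h^{(b, a, \mu)} \to I_1(h^b),
\]
read off from the matrix structure of $\Hom_{\ca{R}}(-, (b, a, \mu))$, with connecting map parametrised by $\mu$. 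Combined with semiorthogonality and cocompleteness of $I_1(\derived(\ca{B}))$, this places $C_F$ inside $I_1(\derived(\ca{B}))$; writing $C_F \simeq I_1 I_1^L F$ then uses the left adjoint $I_1^L$ of $I_1$, whose existence follows from Proposition \ref{res-res-adjunction} applied to the dg-adjunction $i_{\ca{B}}^L \dashv i_{\ca{B}}$.

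With the SOD in place, the right gluing functor is identified on representables via
\[
	\Hom_{\derived(\ca{R})}(I_1 h^b, I_2 h^a[1]) = H^1({}_a \varphi_b [-1]) = H^0({}_a \varphi_b) = \Hom_{\derived(\ca{B})}(h^b, h^a \oL{A} \varphi),
\]
where the last equality uses $h^a \oL{A} \varphi \simeq {}_a\varphi_{-}$ by Yoneda; continuity extends this to all of $\derived(\ca{A})$, so $- \oL{A}\varphi$ is the right gluing functor. The triangle \eqref{dt-sod-KL} is then obtained by applying $I_1^R$ to the SOD triangle $I_2 I_2^R F \to F \to I_1 I_1^L F$ and rotating, using $I_1^R I_1 \simeq \mathrm{id}$ and $I_1^R I_2 \simeq (- \oL{A} \varphi)[-1]$ (the latter again following from the $[-1]$-shift in the off-diagonal Hom complex of the gluing).

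The main technical hurdle is the decomposition of $h^{(b, a, \mu)}$ into the distinguished triangle above: although the outer terms are clear from the matrix structure, identifying the connecting map with $\mu$ and checking distinguishedness in $\derived(\ca{R})$ requires some bookkeeping with the dg-module structure, in particular with the degree shift in the off-diagonal block.
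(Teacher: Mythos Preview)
The paper does not prove this proposition itself; it is quoted from \cite[Proposition 4.6]{KL15}. Your proposal is a correct and standard argument, following the usual pattern for SODs on glued categories: semiorthogonality is checked on generators via the vanishing of the off-diagonal Hom block, the decomposition triangle is built from the counit of $I_2 \dashv I_2^R$, and the cone is placed in the image of $I_1$ by a generation argument. Your identification of the right gluing functor on representables and the derivation of \eqref{dt-sod-KL} by applying $I_1^R$ and rotating are both correct; the key computation $I_1^R I_2 \simeq (- \oL{A} \varphi)[-1]$ follows, as you say, from $\Hom_{\ca{R}}((b,0,0),(0,a,0)) = {}_a\varphi_b[-1]$.

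The one place needing care, as you anticipate, is the triangle $I_2(h^a) \to h^{(b,a,\mu)} \to I_1(h^b)$. The two outer maps come from the closed degree-zero morphisms $(0,\mathrm{id}_a,0) \colon (0,a,0) \to (b,a,\mu)$ and $(\mathrm{id}_b,0,0) \colon (b,a,\mu) \to (b,0,0)$ in $\ca{R}$, and distinguishedness is checked pointwise on the underlying complexes. This is essentially the argument of \cite{KL15}, so your proposal reconstructs the original proof.
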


\begin{rmk}
	\label{rmk:to-cpt-objs}
	Notice that, as $- \stackrel{L}{\otimes}_{\ca{A}} \varphi$ is continuous, the hypotheses of \autoref{compact-obj-dec-2} are satisfied, and therefore we get $\derived(\ca{R})^c = \langle I_1(\derived(\ca{B})^c), I_2(\derived(\ca{A})^c) \rangle$.
\end{rmk}

Let us now consider the functor $I_3^R = \text{Res}_{i_{\ca{A}}^R}$.
Notice that the technical hypothesis of \autoref{res-res-adjunction} is satisfied, therefore we have an adjunction $I_2^R \dashv I_3^R$.

\begin{prop}
	\label{SOD-prop-mine}
	There exists a semiorthogonal decomposition
	\begin{equation}
		\label{SOD-mine}
		\derived(\ca{R}) = \langle I_3^R(\derived(\ca{A})), I_1(\derived(\ca{B})) \rangle
	\end{equation}
	with left gluing functor given by $- \stackrel{L}{\otimes}_{\ca{A}} \varphi[-1]: \derived(\ca{A}) \rightarrow \derived(\ca{B})$.
\end{prop}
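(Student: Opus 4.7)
The plan is to mimic the proof of \autoref{SOD-prop-KL} using the adjunction $I_2^R \dashv I_3^R$, which is supplied by \autoref{res-res-adjunction} applied to the dg-adjunction $i_{\ca{A}} \dashv i_{\ca{A}}^R$ of \eqref{embedding-gluing}--\eqref{adjoint-embeddings-gluing}. The observation driving the whole argument is that $i_{\ca{A}}^R \circ i_{\ca{A}} = \mathrm{id}_{\ca{A}}$ on the nose, so $I_2^R I_3^R = \mathrm{Res}_{\mathrm{id}_{\ca{A}}} = \mathrm{id}_{\derived(\ca{A})}$; combined with the adjunction this gives fully-faithfulness of $I_3^R$ via
\[
	\Hom_{\derived(\ca{R})}(I_3^R A, I_3^R A') \simeq \Hom_{\derived(\ca{A})}(I_2^R I_3^R A, A') \simeq \Hom_{\derived(\ca{A})}(A, A').
\]
For semiorthogonality, the same adjunction reduces $\Hom_{\derived(\ca{R})}(I_1 B, I_3^R A)$ to $\Hom_{\derived(\ca{A})}(I_2^R I_1 B, A)$, and one checks $I_2^R I_1 = 0$ by continuity of both functors (\autoref{prop:KL-ind-res}) after evaluating on representables, where it reduces to $\Hom_{\ca{R}}((0, a, 0), (b, 0, 0)) = 0$, which is the defining feature of the gluing.

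For the distinguished triangle of the SOD, I would fix $F \in \derived(\ca{R})$, take the unit $\eta_F \colon F \to I_3^R I_2^R F$, and complete to a triangle $G \to F \to I_3^R I_2^R F$. Applying $I_2^R$ and using that $I_2^R \eta_F$ is an isomorphism (a triangular-identity consequence of the fact that the counit $I_2^R I_3^R \to \mathrm{id}$ is an iso, which we have just established) shows $I_2^R G = 0$. Feeding $G$ into the SOD triangle $I_2 I_2^R(-) \to (-) \to I_1 I_1^L (-)$ from \autoref{SOD-prop-KL} then yields $G \simeq I_1 I_1^L G \in I_1(\derived(\ca{B}))$, which is exactly the desired decomposition.

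Finally, to pin down the left gluing functor I would use $I_1^L \dashv I_1$ to rewrite
\[
	\Hom_{\derived(\ca{R})}(I_3^R(A)[-1], I_1(B)) \simeq \Hom_{\derived(\ca{B})}(I_1^L I_3^R(A)[-1], B)
\]
and compute $I_1^L I_3^R(A)$ by plugging $F = I_3^R(A)$ into triangle \eqref{dt-sod-KL}. The third vertex is $I_2^R I_3^R(A) \oL{A} \varphi \simeq A \oL{A} \varphi$, while the first vertex $I_1^R I_3^R(A)$ vanishes because at $b \in \ca{B}$ it equals $A(i_{\ca{A}}^R i_{\ca{B}}(b)) = A(0_{\ca{A}})$, which is zero by the $k$-linearity of modules over the additive dg-category $\ca{A}$. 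Thus $I_1^L I_3^R(A) \simeq A \oL{A} \varphi$, and the left gluing functor is $- \oL{A} \varphi[-1]$. I do not anticipate any real obstacle: the argument is a disciplined adjunction chase that recycles \autoref{SOD-prop-KL} and exploits the fact that $i_{\ca{A}}^R$ is a strict left inverse to $i_{\ca{A}}$ already at the dg level.
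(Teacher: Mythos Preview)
Your argument is correct and follows the same route as the paper. The only cosmetic difference is that the paper compresses your semiorthogonality check and triangle construction into the single observation that the adjunction $I_2^R \dashv I_3^R$ with $I_3^R$ fully faithful yields $\derived(\ca{R}) = \langle I_3^R(\derived(\ca{A})), \ker I_2^R \rangle$, and then identifies $\ker I_2^R = I_1(\derived(\ca{B}))$ via \autoref{SOD-prop-KL}; your explicit unpacking of this is the same content. The computation of the left gluing functor via \eqref{dt-sod-KL} and the vanishing $I_1^R I_3^R = 0$ is identical to the paper's.
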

\begin{proof}
	The adjunction $I_2^R \dashv I_3^R$ and the equality $I_2^R I_3^R = \text{Res}_{i_{\ca{A}}} \text{Res}_{i_{\ca{A}}^{R}} = \text{id}$ imply that $I_3^R$ is fully faithful.

	Then, the adjunction $I_2^R \dashv I_3^R$ implies $\derived(\ca{R}) = \langle I_3^R(\derived(\ca{A})), \ker I_2^{R} \rangle$.
	By \autoref{SOD-prop-KL} we know that $\ker I_2^{R} = I_1 ( \derived(\ca{B}))$, and the existence of the SOD follows.

	Using the relations $I_2^R I_3^R = \text{id}$ and $I_1^R I_3^R = 0$, we see from distinguished triangle \eqref{dt-sod-KL} that $I_1^L I_3^R = - \oL{A} \varphi$, which proves the claim for the left gluing functor.
\end{proof}

\begin{rmk}
	\label{rmk:to-cpt-objs-2}
	If the bimodule $\varphi$ is $\ca{B}$-perfect, we can apply \autoref{compact-obj-dec} to get a SOD of compact objects $\derived(\ca{R})^c = \langle I^R_3(\derived(\ca{A})^c), I_2(\derived(\ca{B})^c) \rangle$.
\end{rmk}

\begin{rmk}
	\label{rmk:SOD-for-all}
	Notice that even if the above result is proved for additive dg-categories, by the equivalence \eqref{modules-over-additive-closure} we get that, given any two small dg-categories $\ca{A}$, $\ca{B}$, we have SODs
	\[
		\derived(\ca{B} \sqcup_{\varphi} \ca{A}) = \langle \ind{i_{\ca{B}}}(\derived(\ca{B})), \ind{i_{\ca{A}}}(\derived(\ca{A})) \rangle = \langle I_4(\derived(\ca{A})), \ind{i_{\ca{B}}}(\derived(\ca{B})) \rangle,
	\]
	where $i_{\ca{B}}$ and $i_{\ca{A}}$ are defined in \eqref{eqn:inclusion-easy-gluing}, and for $M \in \derived(\ca{A})$, $a \in \ca{A}$, and $b \in \ca{B}$, we have $I_4(M)_{a} = M_a$ and $I_4(M)_{b} = 0$.
\end{rmk}

\begin{ex}
	Let us consider an upper triangular dg-algebra $R$ of the form \eqref{upper-triangular-dga} with $V$ a vector space concentrated in degree $0$ such that $\dim_k V < \infty$.

	Such (trivial) dg-algebra can be obtained as the path algebra of a quiver with two vertices and morphisms from the second to the first vertex indexed by elements of $V$:
	\[
		\begin{tikzcd}
			2 \ar[r, "V"] & 1.
		\end{tikzcd}
	\]
	Therefore, modules over $R$ give representations of the quiver.

	In this setup, \autoref{SOD-prop-KL} (resp. \autoref{SOD-prop-mine}) recovers the well known full exceptional collection given by the projective (resp. simple) modules.

	More precisely, \eqref{SOD-KL} reads $\derived(R) = \langle P_1, P_2 \rangle$ where $P_1 = k \otimes_{k_1} R = k_1 \oplus V$ and $P_2 = k \otimes_{k_2} R = k_2$ (here $k_i$ denotes the field acting on the $i$-th vertex in the path algebra interpretation), whereas \eqref{SOD-mine} reads $\derived(R) = \langle S_2, S_1 \rangle$ where $S_i = k_i$ is seen as a module via the projection $R \rightarrow k_i$.
\end{ex}

\begin{ex}
	\label{ex:inclusion-doesnt-preserve-cpt}
	We now give an example of a SOD of a cocomplete triangulated category in terms of cocomplete subcategories which does not induce a SOD of compact objects.
	Consider the upper triangular dg-algebra 
	\begin{equation}
		\label{upper-triangular-dga}
		R = \left(
			\begin{array}{cc}
				k & V\\
				0 & k
			\end{array}
		\right)
	\end{equation}
	where $V = k[q]$ is concentrated in degree $0$.
	To make things clear, we will denote the top left $k$ as $k_1$, and the bottom right $k$ as $k_2$.
	From \autoref{SOD-prop-mine}, \autoref{ex:upper-triangular-dg-algebra}, and \autoref{rmk:SOD-for-all} we know that there exists a SOD\footnote{Notice that we necessarily need to use this SOD, as by \autoref{rmk:to-cpt-objs} the one of \autoref{SOD-prop-KL} always induces a SOD of compact objects.} $\derived(R) = \langle \derived(k_1), \derived(k_2) \rangle$ with left gluing functor given by $- \stackrel{L}{\otimes}_{k} V : \derived(k_1) \rightarrow \derived(k_2)$.
	As $V$ is not perfect, \autoref{rmk:to-cpt-objs-2} does not apply and we cannot deduce a SOD for compact objects.
	Indeed, such a decomposition cannot exist because the inclusion of $\derived(k_1)$ does not preserve compact objects.
	To see this, consider the module $k_1 \in \derived(k_1)$ as a module over $R$ via the projection map $R \rightarrow k_1$.
	As a module over $k_1$, this module is compact.
	Let us consider the module $\bigoplus_{n \geq 0} k_2 \in \derived(k_2)$ as a module over $R$.
	Then, we have
	\[
		\Hom_{\derived(R)}(k_1, \bigoplus_{n \geq 0} k_2) \simeq \Hom_{\derived(k_2)}(\bigoplus_{n \geq 0} k_2, \bigoplus_{n \geq 0} k_2) \simeq \prod_{n \geq 0} \left( \bigoplus_{n \geq 0} k_2 \right),
	\]
	whereas
	\[
		\bigoplus_{n \geq 0} \Hom_{\derived(R)}(k_1, k_2) \simeq \bigoplus_{n \geq 0} \Hom_{\derived(k_2)}( \bigoplus_{n \geq 0} k_2 , k_2) \simeq \bigoplus_{n \geq 0} \left( \prod_{n \geq 0} k_2 \right),
	\]
	proving that $k_1$ is not compact in $\derived(R)$.
\end{ex}

\begin{rmk}
	The existence of the SOD in \autoref{SOD-prop-mine} is motivated by \cite[Theorem 3.15]{Halpern-Shipman16} and the discussion preceding it.
	Let us explain the difference between \eqref{SOD-KL} and \eqref{SOD-mine}.
	This is best understood looking at modules over rings.
	Assume we have two rings $R$ and $A$, and two morphisms $i: A \rightarrow R$, $g : R \rightarrow A$ such that $gi = \text{id}_A$.
	Starting from an $A$-module $N_A$ we can produce two different $R$-modules.
	Namely, we can either consider $N_A$ with the structure of $R$-module given by $g$, {\it i.e.} we {\it restrict} the action, or we can consider the $R$-module $N_A \otimes_A R$ (in this case we are {\it inducing} the action via $i$).
	The first construction corresponds to the functor $\text{Res}_g$, while the second one corresponds to $\text{Ind}_{i}$.
	Hence, in \eqref{SOD-KL} we are {\it inducing} the $\ca{R}$-module structure, whereas in \eqref{SOD-mine} we are {\it restricting} it. 
\end{rmk}

To conclude this section, let us notice that $\ca{R}^{\mathrm{opp}} \simeq \ca{A}^{\mathrm{opp}} \times_{\varphi^{\mathrm{opp}}} \ca{B}^{\mathrm{opp}}$, and therefore \autoref{SOD-prop-KL} implies the existence of a SOD $\derived(\ca{R}^{\mathrm{opp}}) = \langle J_1(\derived(\ca{A}^{\mathrm{opp}})), J_2(\derived(\ca{B}^{\mathrm{opp}})) \rangle$ with embedding functors given by $J_1 = \ind{i_{\ca{A}^{\mathrm{opp}}}}$ and $J_2 = \ind{i_{\ca{B}^{\mathrm{opp}}}}$, and right gluing functor $\varphi \oL{B} - $.
In this situation \autoref{compact-obj-dec-2} applies, and we get
\begin{equation}
	\label{SOD-left-modules}
	\derived(\ca{R}^{\mathrm{opp}})^c = \langle J_1(\derived(\ca{A}^{\mathrm{opp}})^c), J_2(\derived(\ca{B}^{\mathrm{opp}})^c) \rangle.
\end{equation}

In the same way, \autoref{SOD-prop-mine} implies $\derived(\ca{R}^{\mathrm{opp}}) = \langle J_3^R(\derived(\ca{B}^{\mathrm{opp}})), J_1(\derived(\ca{A}^{\mathrm{opp}})) \rangle$, where $J^R_3 = \res{i_{\ca{B}^{\mathrm{opp}}}^R}$.
Using \autoref{thm:adjoints} we see that, if $\varphi$ is $\ca{A}$-perfect, then \autoref{compact-obj-dec} applies, and we get
\begin{equation}
	\label{SOD-left-modules-2}
	\derived(\ca{R}^{\mathrm{opp}})^c = \langle J_3^R(\derived(\ca{B}^{\mathrm{opp}})^c), J_1(\derived(\ca{A}^{\mathrm{opp}})^c) \rangle.
\end{equation}

\subsection{Modules and bimodules on glued categories}
\label{subsect:modules-on-glued}

Let $\ca{A}$, $\ca{B}$ be two small dg-categories and consider $\varphi \in \bimod{A}{B}$ a bimodule.
For the rest of this section, $\ca{R}$ will denote either $\ca{B} \sqcup_{\varphi} \ca{A}$ or $\ca{B} \times_{\varphi[1]} \ca{A}$.

We will now describe the notation we will use to work with modules over $\ca{R}$.
We follow \cite[§ 7.2]{AL-P-n-func}.

A module $F \in \stmod{\ca{R}}$ can be described as a couple $\rmodule{F_{\ca{A}}}{F_{\ca{B}}}$ of an $\ca{A}$- (resp. $\ca{B}$-) module $F_{\ca{A}}$ (resp. $F_{\ca{B}}$) together with a closed, degree $0$ morphism $\rho_F  \in \mathrm{Hom}_{\ca{B}}(F_{\ca{A}} \otimes_{\ca{A}} \varphi, F_{\ca{B}})$ which we will call the {\it structure morphism}.
A morphism $\alpha : F \rightarrow G$ of degree $i$ is given by a couple of degree $i$ morphisms $(\alpha_{\ca{A}} : F_{\ca{A}} \rightarrow G_{\ca{A}}, \alpha_{\ca{B}} : F_{\ca{B}} \rightarrow G_{\ca{B}})$ such that the following diagram commutes
\begin{equation}
	\label{morphisms-right-modules}
	\begin{tikzcd}
		F_{\ca{A}} \otimes_{\ca{A}} \varphi \ar[d, "\alpha_{\ca{A}} \otimes \mathrm{id}"] \ar[r, "\rho_F"] & F_{\ca{B}} \ar[d, "\alpha_{\ca{B}}"]\\
		G_{\ca{A}} \otimes_{\ca{A}} \varphi \ar[r, "\rho_G"] & G_{\ca{B}}
	\end{tikzcd}.
\end{equation}
Differential and composition are computed componentwise.
Notice that this description of the category $\stmod{\ca{R}}$ mirrors the SOD of \autoref{SOD-prop-mine}.

\begin{rmk}
	\label{rmk:construction-module-from-matrix}
	When $\ca{R} = \ca{B} \times_{\varphi[1]} \ca{A}$ the recipe to produce an $\ca{R}$-module from the matrix $\rmodule{F_{\ca{A}}}{F_{\ca{B}}}$ and the structure morphism $\rho_F$ is the following: take $(b,a,\mu) \in \ca{R}$ with $\mu \in Z^0({}_a \varphi_b[1])$, then define the complex (notice that this construction is well defined because we have an explicit construction of the cone in ${\bf Mod}\text{-}k$)
	\[
		F(b,a,\mu) := \mathrm{cone} \left( (F_{\ca{A}})_a[-1] \xrightarrow{m \mapsto m \otimes \mu} (F_{\ca{A}} \otimes_{\ca{A}} \varphi)_b \xrightarrow{s \mapsto \rho_F(s)} F_b\right).
	\]
	To a degree $i$ morphism $(f_{\ca{B}}, f_{\ca{A}}, f_{\ca{B}\ca{A}}) : (b,a,\mu) \rightarrow (b',a',\mu')$ in $\ca{R}^{\mathrm{opp}}$ we associate the morphism given by the matrix $\bimodule{f_{\ca{B}}}{\phi}{0}{f_{\ca{A}}}$ with $\phi : (F_{\ca{A}})_a \rightarrow (F_{\ca{B}})_{b'}$ given by the formula $\phi(m) = \rho_{F}(m \otimes f_{\ca{B}\ca{A}})$.
\end{rmk}

The category $\stmod{R^{\mathrm{opp}}}$ admits a similar description: objects are couples\footnote{We think of them as column vectors.} $\rmodule{F_{\ca{A}}}{F_{\ca{B}}}^{t}$ together with a structure morphism $\rho_F : \varphi \otimes_{\ca{B}} F_{\ca{B}} \rightarrow F_{\ca{A}}$, and morphisms are couples making an analogue of \eqref{morphisms-right-modules} commute.

The category $\bimod{\ca{R}}{\ca{R}}$ can be described as follows: a bimodule is given by a matrix of bimodules
\[
	\bimodule{{}_{\ca{A}} F_{\ca{A}}}{{}_{\ca{A}} F_{\ca{B}}}{{}_{\ca{B}} F_{\ca{A}}}{{}_{\ca{B}} F_{\ca{B}}}
\]
together with a closed, degree $0$ structure morphism
\[
	\begin{aligned}
	\rho_F \in & \, \mathrm{Hom}_{\ca{A}\text{-}\ca{B}}({}_{\ca{A}} F_{\ca{A}} \otimes_{\ca{A}} \varphi, {}_{\ca{A}} F_{\ca{B}} ) \oplus \mathrm{Hom}_{\ca{B}\text{-}\ca{B}}({}_{\ca{B}} F_{\ca{A}} \otimes_{\ca{A}} \varphi, {}_{\ca{B}} F_{\ca{B}}) \oplus \\
	& \, \oplus \mathrm{Hom}_{\ca{A}\text{-}\ca{A}}(\varphi \otimes_{\ca{B}} ({}_{\ca{B}} F_{\ca{A}}), {}_{\ca{A}} F_{\ca{A}}) \oplus \mathrm{Hom}_{\ca{A}\text{-}\ca{B}}(\varphi \otimes_{\ca{B}} ({}_{\ca{B}} F_{\ca{B}}), {}_{\ca{A}} F_{\ca{B}})
	\end{aligned}
\]
whose components make the following diagram commute
\[
	\begin{tikzcd}
		\varphi \otimes_{\ca{B}} ({}_{\ca{B}} F_{\ca{A}}) \otimes_{\ca{A}} \varphi \ar[d] \ar[r] & \varphi \otimes_{\ca{B}} ({}_{\ca{B}} F_{\ca{B}}) \ar[d]\\
		{}_{\ca{A}} F_{\ca{A}} \otimes_{\ca{A}} \varphi \ar[r] & {}_{\ca{A}} F_{\ca{B}}.
	\end{tikzcd}
\]
Morphisms of degree $i$ are matrices of morphisms of degree $i$ that commute with the components of the structure morphism in way similar to \eqref{morphisms-right-modules}.
Differential and composition are computed componentwise.

The bar complex $\barr{\ca{R}}$ is described by the matrix
\begin{equation}
	\label{bar-complex}
	\left(
		\begin{array}{cc}
			\barr{A} & \left\{ \begin{tikzcd}[column sep = 4em] \barr{A} \otimes_{\ca{A}} \varphi \otimes_{\ca{B}} \barr{B} \ar[r, "{\lmodule{-\mathrm{id} \otimes \tau}{\tau \otimes \mathrm{id}}}"] & \underset{\mathrm{deg. 0}}{(\barr{A} \otimes_{\ca{A}} \varphi) \oplus (\varphi \otimes_{\ca{B}} \barr{B})} \end{tikzcd} \right\}\\
			0 & \barr{B}
		\end{array}
	\right)
\end{equation}
whose structure morphism components are given in \cite[(7.16)]{AL-P-n-func}.

Using \eqref{bar-complex} one can give a description of $\barmod{\ca{R}}$ and $\barbimod{\ca{R}}{\ca{R}}$.
Objects are the same as before for both categories, however, as the structure morphism is defined using the standard tensor product and not the bar tensor product, we will adopt the following convention: we will define structure morphisms using the bar tensor product, but we will ensure that these morphisms lie in the image of the faithful functor $\stmod{B} \hookrightarrow \barmod{B}$ constructed in \cite[Proposition 3.3]{Anno-Logvinenko-Bar-Categories}.
A morphism $\alpha : F \rightarrow G$ of degree $i$ in $\barmod{\ca{R}}$ is a triple $(\alpha_{\ca{A}}, \alpha_{\ca{B}}, \alpha_{\ca{A}\ca{B}})$ of degree $(i,i,i-1)$ morphisms
\[
	\begin{tikzcd}
		F_{\ca{A}} \ar[r, "\alpha_{\ca{A}}"] & G_{\ca{A}} & F_{\ca{B}} \ar[r, "\alpha_{\ca{B}}"] & G_{\ca{B}} & F_{\ca{A}} \ob{A} \varphi \ar[r, dashed, "\alpha_{\ca{A}\ca{B}}"] & G_{\ca{B}}
	\end{tikzcd}
\]
in $\barmod{A}$, $\barmod{B}$, and $\barmod{B}$, respectively.
Composition law and differential can be found in \cite[§ 7.2]{AL-P-n-func}.
Here we only notice that a closed, degree $0$ morphism in $\barmod{\ca{R}}$ is the choice of two morphisms $\alpha_{\ca{A}}$, $\alpha_{\ca{B}}$ that make \eqref{morphisms-right-modules} commute up to the homotopy $\alpha_{\ca{A}\ca{B}}$.
Morphisms change in a rather complicated way for the bimodule category and we refer to \cite[pag. 55]{AL-P-n-func} for a description.

%
%
%
%
%
%

\subsection{Spherical functors}

Given an $\ca{A}\text{-}{\ca{B}}$ bimodule which is $\ca{A}$- and $\ca{B}$-perfect, we know that the functor $f_M = - \stackrel{L}{\otimes}_{\ca{A}} M \colon \derived(\ca{A}) \rightarrow \derived(\ca{B})$ has left and right adjoints $f_M^L$, $f_M^R$.
Moreover, we know that all these functors are induced by bimodules.
Therefore, it makes sense to give the following

\begin{dfn}
	The {\it twist} associated to $M$ is the endofunctor $t_M$ of $\derived(\ca{B})$ defined as the cone of the counit
	\[
		f_M f_M^R \rightarrow \mathrm{id} \rightarrow t_M.
	\]
	The {\it cotwist} associated to $M$ is the endofunctor $c_M$ of $\derived(\ca{A})$ defined as the shifted cone of the unit
	\[
		c_M \rightarrow \mathrm{id} \rightarrow f_M^R f_M.
	\]
\end{dfn}

To get a functorial construction of the cones, we can take the cones of the maps between the bimodules inducing the above functors.

\begin{dfn}
	\label{def-spherical-functor}
	The bimodule $M$ is called a {\it spherical bimodule}, and the functor $f_M$ is called {\it spherical}, if all the following hold
	\begin{itemize}\tabularnewline
		\item $t_M$ is an autoequivalence of $\derived(\ca{B})$.
		\item $c_M$ is an autoequivalence of $\derived(\ca{A})$.
		\item The natural morphism $f^L_M t_M[-1] \rightarrow f^L_M f_M f_M^R \rightarrow f_M^R$ is an isomorphism.
		\item The natural morphism $f_M^R \rightarrow f_M^R f_M f_M^L \rightarrow c_M f^L[1]$ is an isomorphism.
	\end{itemize}
\end{dfn}

\begin{thm}[\text{\cite[Theorem 5.1]{Anno-Log-17}}]
	\label{thm:spherical-functors}
	If any two conditions of \autoref{def-spherical-functor} are satisfied, then all four are satisfied.
\end{thm}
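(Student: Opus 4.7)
The plan is a bookkeeping argument organised around the two canonical natural morphisms appearing in conditions (3) and (4), together with their duals obtained by interchanging the roles of left and right adjoints. Writing these four morphisms down explicitly, one sees they are all built by combining an adjunction (co)unit with one of the defining triangles of $t_M$ or $c_M$. There are $\binom{4}{2}=6$ pairs of conditions to consider, but the evident symmetry between the data $(t_M, f_M^R)$ and $(c_M, f_M^L)$ (given by passing to opposite categories and swapping $M$ with an appropriate dual) cuts this roughly in half, so it suffices to verify a small number of key implications.

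First I would prove the pivotal reduction: assuming condition (1), invertibility of the canonical morphism in (3) is equivalent to invertibility of the one in (4). The argument precomposes the triangle $f_M f_M^R \to \mathrm{id} \to t_M$ with $f_M^L$ on the left and uses invertibility of $t_M$ to rotate it into a triangle in which both canonical morphisms appear as connecting maps of a single octahedron; symmetrically, condition (2) gives the same equivalence. This step essentially reduces the problem to the cases (1)+(3) and (2)+(4).

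Next, for the core implication (1)+(2) $\Rightarrow$ (3)+(4), I would precompose the defining triangle of $t_M$ with $f_M^L$ on the left, rotate the triangle for $c_M$ into the form $f_M^R f_M \to \mathrm{id} \to c_M[1]$, and match the two using the octahedral axiom. Invertibility of both $t_M$ and $c_M$ then forces the third vertices of the relevant octahedra to agree, which is exactly the isomorphism statement in (3) and, symmetrically, in (4). For the reverse direction (3)+(4) $\Rightarrow$ (1)+(2), I would construct a candidate inverse to $t_M$ as the shifted cone of the unit $\mathrm{id} \to f_M f_M^L$ and use (3) and (4) to identify the compositions $t_M \circ t_M^{-1}$ and $t_M^{-1} \circ t_M$ with the identity via another octahedron; the cotwist case is entirely parallel, handled by the dual candidate built from $f_M^R f_M$. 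The remaining mixed pairs, for example (1)+(4) $\Rightarrow$ (2)+(3), follow by combining the pivotal reduction above with one of the two preceding arguments.

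The main obstacle is that cones in a triangulated category are only defined up to non-canonical isomorphism, so none of the octahedra and compositions above \emph{a priori} assemble into natural isomorphisms of functors. To circumvent this I would perform every construction inside the bar categories $\barmod{A}$ and $\barmod{B}$, exploiting the explicitly chosen homotopy lifts \eqref{trace-maps}, \eqref{act-map-A}, \eqref{act-map-B} of the units and counits, together with the standard coherence relations between them (which hold on the nose after these fixings). The cones then become genuine h-projective bimodules, the morphisms above become honest closed degree zero morphisms of bimodules, and passing to $H^0$ at the end transports the isomorphisms to the level of derived categories; this is precisely the role played by the bar formalism recalled earlier.
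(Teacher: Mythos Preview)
The paper does not contain a proof of this statement: it is quoted verbatim from \cite[Theorem~5.1]{Anno-Log-17} and used as a black box, so there is nothing in the present paper to compare your proposal against.

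That said, your sketch is broadly in the spirit of the original Anno--Logvinenko argument: one works at the level of bimodules (in their formalism, twisted complexes over a suitable dg-enhancement; here the bar category plays the same role), writes down the canonical morphisms $f_M^L t_M[-1] \to f_M^R$ and $f_M^R \to c_M f_M^L[1]$ explicitly as morphisms of bimodules, and then runs a case analysis relating the various cones. The symmetry reduction you invoke is real, but the actual bookkeeping in \cite{Anno-Log-17} is considerably more delicate than your outline suggests: the octahedra you appeal to do not assemble automatically, and the key step is a careful verification that certain compositions of homotopy trace and action maps coincide up to an explicit homotopy, which is what makes the candidate inverses to $t_M$ and $c_M$ honest inverses. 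Your plan names the right ingredients and the right enhancement to make cones functorial, but a complete proof requires tracking these homotopies, not just invoking the octahedral axiom.
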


\begin{dfn}
	\label{dfn-sigma}
	For any spherical bimodule $M \in \barbimod{A}{B}$ we fix once and for all a bimodule $C_M \in \barbimod{A}{A}$ that lifts the cotwist $c_{M}$ and a lift
	\begin{equation}
		\label{eqn:sigma-dfn}
			\sigma: M \ob{B} \dual{M}{B}[-1] \rightarrow C_M
	\end{equation}
	in $\barbimod{A}{A}$ of the map $M \oL{B} M^{\widetilde{\ca{B}}} \rightarrow C_M$ that fits in the distinguished triangle
	\[
		C_M \rightarrow \ca{A} \rightarrow M \oL{B} M^{\widetilde{\ca{B}}}.
	\]
\end{dfn}

In the following, when we write $\sigma$ we will always refer to the map \eqref{eqn:sigma-dfn} for some spherical bimodule which will be clear from the context.

\begin{rmk}
	\label{rmk:cotwist-identifies-maps}
	For later reference, we notice that under the isomorphism $f_M^R \simeq c_M f^L[1]$ the counit map $f^L_M f \rightarrow \mathrm{id}$ is identified with the map $f_M^R f \xrightarrow{\sigma} c_M[1]$.
\end{rmk}

\section{Gluing spherical functors}
\label{section:general-case}

The following theorem is the main result of this article.

\begin{thm}
	\label{twist-twist=twist}
	Let $\ca{A}$, $\ca{B}$ and $\ca{C}$ be three small dg-categories over a field $k$, and $M \in \derived(\ca{A}\text{-}\ca{C})$, $N \in \derived(\ca{B}\text{-}\ca{C})$ be two  spherical bimodules.
	Then, the bimodule $P := \rmodule{M}{N}^t \in \derived((\ca{B} \sqcup_{\varphi} \ca{A})\text{-}\ca{C})$, $\varphi = \textup{RHom}_{\ca{C}}(N,M)$, with structure morphism
	\[
		\mathrm{RHom}_{\ca{C}}(N,M) \oL{B} N \xrightarrow{\mathrm{tr}} M
	\]
	is spherical, the twist around it is given by the composition $t_N \circ t_M$, and the cotwist is described by the matrix
	\[
		C_P = \left(
		\begin{array}{cc}
			C_M & 0 \\
			\mathrm{RHom}_{\ca{C}}(M,N)[-1] & C_N
		\end{array}
		\right)
		\in \derived((\ca{B} \sqcup_{\varphi} \ca{A})\text{-}(\ca{B} \sqcup_{\varphi} \ca{A}))
	\]
	with non-zero structure morphisms
	\[
		\begin{aligned}
			& \, \mathrm{RHom}_{\ca{C}}(M,N)[-1] \oL{A} \mathrm{RHom}_{\ca{C}}(N,M) \xrightarrow{\sigma \circ \mathrm{cmps}} C_N\\
			& \, \mathrm{RHom}_{\ca{C}}(N,M) \oL{B}  \mathrm{RHom}_{\ca{C}}(M,N)[-1] \xrightarrow{\sigma \circ \mathrm{cmps}} C_M,
		\end{aligned}
	\]
	where $\mathrm{cmps}$ denotes the composition of morphisms.
\end{thm}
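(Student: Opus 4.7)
The aim is to compute both adjoints of $f_P$, then read off the cotwist and twist bimodules from the unit and counit of $(f_P, f_P^R)$, and finally conclude sphericality via \autoref{thm:spherical-functors}. First establish that $P$ is both $\ca{R}$- and $\ca{C}$-perfect so that \autoref{thm:adjoints} provides both adjoints $f_P^L = - \oL{R} P^{\widetilde{\ca{R}}}$ and $f_P^R = - \oL{C} P^{\widetilde{\ca{C}}}$. $\ca{C}$-perfectness of $P$ is inherited from $M$ and $N$; $\ca{R}$-perfectness of $P$ is checked pointwise, as $P_c = \lmodule{M_c}{N_c}$ is compact in $\derived(\ca{R}^{\mathrm{opp}})$ by \eqref{SOD-left-modules-2} (applicable because $\varphi = \mathrm{RHom}_\ca{C}(N,M)$ is $\ca{B}$-perfect, since $N$ is $\ca{C}$-perfect and $M$ is $\ca{A}$-perfect).

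\textbf{Cotwist.} Identify $C_P$ as the fibre of the unit $\ca{R} \to P \oL{C} P^{\widetilde{\ca{C}}}$. Writing both sides as $2 \times 2$ matrices of bimodules over $\ca{A}, \ca{B}$,
\[
\ca{R} = \bimodule{\ca{A}}{\varphi}{0}{\ca{B}}, \qquad P \oL{C} P^{\widetilde{\ca{C}}} = \bimodule{M \oL{C} M^{\widetilde{\ca{C}}}}{\varphi}{\mathrm{RHom}_\ca{C}(M,N)}{N \oL{C} N^{\widetilde{\ca{C}}}},
\]
where the $(\ca{A},\ca{B})$-entry uses $N$'s $\ca{C}$-perfectness to identify $M \oL{C} N^{\widetilde{\ca{C}}} \simeq \varphi$. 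The unit has components: the action maps of $M$ and $N$ on the diagonals, the identity on the upper-right, and zero on the lower-left. Componentwise fibre yields the asserted matrix for $C_P$. The stated structure morphisms follow by tracking the structure morphisms of $P \oL{C} P^{\widetilde{\ca{C}}}$ (composition of $\mathrm{RHom}$ classes) through the fibre sequence and applying the lift $\sigma$ of \autoref{dfn-sigma}.

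\textbf{Twist.} Use the bar resolution \eqref{bar-complex} of the diagonal $\ca{R}$-bimodule to model $P^{\widetilde{\ca{C}}} \oL{R} P$ as an explicit two-term complex
\[
M^{\widetilde{\ca{C}}} \oL{A} \varphi \oL{B} N \xrightarrow{\lmodule{\mathrm{str}_P}{-\mathrm{str}_{P^{\widetilde{\ca{C}}}}}} (M^{\widetilde{\ca{C}}} \oL{A} M) \oplus (N^{\widetilde{\ca{C}}} \oL{B} N),
\]
with the counit $\epsilon_P$ given by $(\epsilon_M, \epsilon_N)$ on the degree-$0$ summand and zero on the degree-$(-1)$ summand. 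On the other hand, unfolding $T_M = \mathrm{cone}(\epsilon_M)$ and $T_N = \mathrm{cone}(\epsilon_N)$ presents $T_M \oL{C} T_N$ as the totalization of the $2 \times 2$ bicomplex with entries $M^{\widetilde{\ca{C}}} \oL{A} \varphi \oL{B} N$, $M^{\widetilde{\ca{C}}} \oL{A} M$, $N^{\widetilde{\ca{C}}} \oL{B} N$, $\ca{C}$, whose horizontal differentials are $\mathrm{str}_P$ and $\epsilon_N$ and whose vertical differentials are $\mathrm{str}_{P^{\widetilde{\ca{C}}}}$ and $\epsilon_M$. Since $T_P = \mathrm{cone}(\epsilon_P)$ is precisely this totalization (after matching signs), one obtains $T_P \simeq T_M \oL{C} T_N$ and hence $t_P \simeq t_N \circ t_M$.

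\textbf{Sphericality.} Since $t_M$ and $t_N$ are autoequivalences, so is $t_P \simeq t_N \circ t_M$, giving one of the four conditions of \autoref{def-spherical-functor}. By \autoref{thm:spherical-functors} only one further condition is required: I would verify condition (3) via the SOD of \autoref{SOD-prop-mine}, where it reduces to the corresponding (known) conditions for $M$ and $N$ together with a compatibility encoded by the off-diagonal structure morphisms of $C_P$ identified in the Cotwist step. The main obstacle is the Twist step: aligning signs and differentials between the bar-resolution model of $P^{\widetilde{\ca{C}}} \oL{R} P$ and the totalization of $T_M \oL{C} T_N$ carefully enough that the isomorphism is canonical and compatible with the adjunction counits, rather than merely coincidental.
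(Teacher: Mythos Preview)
Your plan is structurally aligned with the paper: perfectness of $P$, computation of the cotwist via the fibre of the unit in matrix form, and computation of the twist via a bar-resolution model of $\dual{P}{C} \oL{R} P$ all match \autoref{lem:P-is-perfect}, \autoref{prop:description-cotwist}, and \autoref{prop:description-twist} closely. One small slip: to invoke \eqref{SOD-left-modules-2} you need $\varphi$ to be $\ca{A}$-perfect, not $\ca{B}$-perfect; this is what actually follows from $M$ being $\ca{A}$-perfect and $\dual{N}{C}$ being $\ca{C}$-perfect.

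The substantive divergence is in the sphericality step. The paper does \emph{not} verify condition (3); instead it proves directly that $c_P$ is an autoequivalence (\autoref{prop:cotwist-autoequivalence}), and then appeals to \autoref{thm:spherical-functors} with the pair (twist autoequivalence, cotwist autoequivalence). That proof is genuinely the hardest part of the argument: one first shows $c_P$ is fully faithful by checking morphisms between the four SOD blocks of \eqref{SOD-KL} separately --- using \eqref{cotwist-restricted-to-a}, the sphericality of $M$ and $N$, and the identification of \autoref{rmk:cotwist-identifies-maps} to reduce each case to an adjunction/autoequivalence statement --- and then deduces essential surjectivity from $\mathrm{im}(c_P)^\perp = 0$. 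Your proposed route via condition (3) is in principle admissible, but the sentence ``it reduces to the corresponding (known) conditions for $M$ and $N$ together with a compatibility'' hides real work: $f_P^L t_P[-1]$ involves the \emph{composite} $t_N t_M$, not the twists separately, and you also need an explicit description of $P^{\widetilde{\ca{R}}}$ as an $\ca{R}$-module (which itself involves the gluing data). Either carry that computation out in full, or switch to the paper's strategy and prove $c_P$ is an autoequivalence; as written, the sphericality step is a gap.
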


\begin{rmk}
	Notice that as $\text{RHom}_{\ca{C}}(N,M)$ is only defined up to quasi-isomorphism, also $\ca{B} \sqcup_{\varphi} \ca{A}$ is only defined up to a quasi-equivalence.
	However, as the statement of the theorem concerns $P$ as an element of $\derived((\ca{B} \sqcup_{\varphi} \ca{A})\text{-}\ca{C})$, this does not create any problem.
\end{rmk}

By \autoref{prop:additive-not-restrictive} it is not restrictive to assume that $\ca{A}$ and $\ca{B}$ are additive, and to prove the result for the category $\ca{R} := \ca{B} \times_{\varphi[1]} \ca{A}$, $\varphi = M \ob{C} \dual{N}{C}$, and the bimodule $P: = \rmodule{M}{N}^t$ with structure morphism
\begin{equation}
	\label{structure-morphism-P}
	\rho : M \ob{C} \dual{N}{C} \ob{B} N \xrightarrow{\mathrm{id} \ob{} \mathrm{tr}} M.
\end{equation}
In the following we will stick to this setup.

As a corollary to \autoref{twist-twist=twist}, we readily get

\begin{cor}
	\label{cor:commutativity-relation}
	With the same setting as in \autoref{twist-twist=twist}, we have
	\[
		t_{N} \circ t_M \simeq t_{t_N(M)} \circ t_{N},
	\]
	where $t_N(M) \in \derived(\ca{A}\text{-}\ca{C})$ is the bimodule given by ${}_a t_N(M)_{c} = t_N({}_a M)_c$ for any $a \in \ca{A}$, $c \in \ca{C}$.
\end{cor}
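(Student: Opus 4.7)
The plan is to deduce the corollary from a general principle about spherical functors: if $f \colon \derived(\ca{A}) \to \derived(\ca{C})$ is spherical with twist $t_f$, and $\Phi$ is any autoequivalence of $\derived(\ca{C})$, then $\Phi \circ f$ is again spherical, with twist $\Phi \circ t_f \circ \Phi^{-1}$ and the same cotwist $c_f$. This follows formally because $(\Phi f)^{R} = f^R \Phi^{-1}$ and $(\Phi f)^{L} = f^L \Phi^{-1}$, so the counit triangle of $\Phi f \dashv (\Phi f)^{R}$ is obtained from the counit triangle of $f \dashv f^R$ by conjugating with $\Phi$, and similarly for the unit triangle. In particular both $t_{\Phi f}$ and $c_{\Phi f}$ are autoequivalences, so sphericality of $\Phi f$ is immediate from \autoref{thm:spherical-functors}.

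With this principle in hand, I would apply it to $\Phi = t_N$ (an autoequivalence since $N$ is spherical) and $f = f_M$. Representing the autoequivalence $t_N$ by a $\ca{C}\text{-}\ca{C}$ bimodule $T_N$, associativity of the derived tensor product gives
\[
    t_N \circ f_M(X) \simeq X \oL{A} M \oL{C} T_N \simeq X \oL{A} t_N(M) = f_{t_N(M)}(X),
\]
so the functor $t_N \circ f_M$ is represented by the bimodule $t_N(M)$ defined in the statement. Combining this identification with the general principle, $t_N(M)$ is spherical and its twist satisfies $t_{t_N(M)} \simeq t_N \circ t_M \circ t_N^{-1}$; rearranging yields the desired $t_N \circ t_M \simeq t_{t_N(M)} \circ t_N$.

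The step I would expect to require the most care is the identification $t_N \circ f_M \simeq f_{t_N(M)}$ at the level of spherical data, rather than just as functors on derived categories: one must match up the units, counits, and the lifts $\sigma$ of \autoref{dfn-sigma} used to build the twist and cotwist triangles on both sides. I expect this to be essentially bookkeeping in the bar category formalism of \autoref{subsect:modules-on-glued}, using that $T_N$ is a perfect bimodule realising the autoequivalence $t_N$. An alternative structural route would be to realise $t_N \circ t_M$ and $t_{t_N(M)} \circ t_N$ as the twist of \emph{one and the same} spherical functor $P$ from \autoref{twist-twist=twist}, by exhibiting two different semiorthogonal decompositions of its source category (related by a mutation through $\derived(\ca{B})$); but the direct argument above avoids any mutation calculation and is the cleanest implementation of the idea.
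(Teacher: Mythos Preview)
Your argument is correct, but it takes a genuinely different route from the paper. The paper proves the corollary \emph{via} the main theorem: it uses the description of $c_P$ to exhibit the two semiorthogonal decompositions
\[
\derived(\ca{R}) = \langle c_P(\ind{i_{\ca{A}}}(\derived(\ca{A}))), \ind{i_{\ca{B}}}(\derived(\ca{B})) \rangle
\quad\text{and}\quad
\derived(\ca{R}) = \langle c_P(\ind{i_{\ca{B}}}(\derived(\ca{B}))), c_P(\ind{i_{\ca{A}}}(\derived(\ca{A}))) \rangle,
\]
and then invokes \cite[Theorem 4.14]{Halpern-Shipman16} to factor $t_P$ through the two decompositions, identifying the new spherical bimodule $E = \ca{A}_{i_{\ca{A}}^R} \ob{R} P$ with $t_N(M)$. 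In other words, the paper's proof is precisely the ``alternative structural route'' you sketched and then set aside.

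Your direct conjugation argument has the virtue of being entirely elementary and independent of \autoref{twist-twist=twist}: it shows the commutation relation holds for \emph{any} pair of spherical bimodules, without ever building the glued category $\ca{R}$ or computing $c_P$. The paper's approach, by contrast, shows how the relation is encoded in the structure of $P$ itself, and exhibits $t_N(M)$ concretely as the restriction of $P$ along $i_{\ca{A}}^R$; this makes the corollary a genuine consequence of the main theorem rather than a parallel observation. Your caveat about matching the lifts $\sigma$ is in fact unnecessary: since the twist and cotwist are determined in $\derived(\ca{C}\text{-}\ca{C})$ and $\derived(\ca{A}\text{-}\ca{A})$ as cones of adjunction (co)units, and since the (co)units of the composed adjunction $t_N f_M \dashv f_M^R t_N^{-1}$ are built from those of the factors with the $t_N$-part being an isomorphism, the identifications $t_{t_N(M)} \simeq t_N t_M t_N^{-1}$ and $c_{t_N(M)} \simeq c_M$ hold on the nose at the bimodule level.
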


\begin{proof}
	We will see in \autoref{rmk:HLS-theorem} that we have
	\[
		\derived(\ca{R}) =  \langle c_P(\ind{i_{\ca{A}}}(\derived(\ca{A}))), \ind{i_{\ca{B}}}(\derived(\ca{B})) \rangle = \langle \res{i^R_{\ca{A}}}(\derived(\ca{A})), \ind{i_{\ca{B}}}(\derived(\ca{B})) \rangle.
	\]
	Moreover, combining \eqref{SOD-KL} with the fact that $c_P$ is an autoequivalence, we get
	\[
		\derived(\ca{R}) = \langle c_P(\ind{i_{\ca{B}}}(\derived(\ca{B}))), c_P(\ind{i_{\ca{A}}}(\derived(\ca{A}))) \rangle,
	\]
	and therefore we can apply \cite[Theorem 4.14]{Halpern-Shipman16}.
	We deduce that $E = \ca{A}_{i^{R}_{\ca{A}}} \ob{R} P$ is a spherical bimodule, and that $t_P \simeq t_{E} \circ t_N$.

	To conclude, notice that we have an isomorphism in $\derived(\ca{A}\text{-}\ca{C})$
	\[
		E \simeq \left\{ M \ob{C} \dual{N}{C} \ob{B} N \xrightarrow{\mathrm{id} \ob{} \mathrm{tr}} \underset{\deg 0}{M} \right\} \simeq t_N(M)
	\]
	yielding the claim.
\end{proof}

We split the proof \autoref{twist-twist=twist} in four parts: the proof that $P$ is perfect on both sides (\autoref{lem:P-is-perfect}), the description of the twist (\autoref{prop:description-twist}), the description of the cotwist (\autoref{prop:description-cotwist}), and the proof that the cotwist is an autoequivalence (\autoref{prop:cotwist-autoequivalence}).
Then, we will conclude using \autoref{thm:spherical-functors}.

For future reference, we denote
\begin{equation}
	\label{functor-f-P}
	\derived(\ca{R}) \xrightarrow{f_P} \derived(\ca{C})
\end{equation}
the functor induced by $P$.

We begin by proving the following

\begin{lem}
	\label{lem:P-is-perfect}
	The $\ca{R}\text{-}\ca{C}$ bimodule $P$ of \autoref{twist-twist=twist} is $\ca{R}$- and $\ca{C}$-perfect.
\end{lem}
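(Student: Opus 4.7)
The plan is to apply \autoref{thm:adjoints}, which turns the two perfectness assertions into compactness-preservation statements: $P$ will be $\ca{C}$-perfect exactly when $f_P$ of \eqref{functor-f-P} preserves compactness, and $P$ will be $\ca{R}$-perfect exactly when the dual functor $P \oL{R} - \colon \derived(\ca{C}^{\mathrm{opp}}) \rightarrow \derived(\ca{R}^{\mathrm{opp}})$ preserves compactness. Since both properties are stable under cones and retracts, in each case it will suffice to verify the claim on a set of compact generators, produced from the semiorthogonal decompositions recalled in \autoref{SOD-prop-KL}, \eqref{SOD-left-modules}, and \autoref{rmk:SOD-for-all}.

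For the first assertion, a compact generating set of $\derived(\ca{R})$ is given by $\{h^{i_{\ca{A}}(a)}, h^{i_{\ca{B}}(b)}\}_{a \in \ca{A},\, b \in \ca{B}}$ by \autoref{rmk:to-cpt-objs}. I would then use the standard identity $h^r \oL{R} P = P_r$ to compute $f_P(h^{i_{\ca{A}}(a)}) = M_a$ and $f_P(h^{i_{\ca{B}}(b)}) = N_b$ (reading off the column vector $P = \rmodule{M}{N}^t$ at the relevant objects, for which the $\mu = 0$ component makes the cone formula of \autoref{rmk:construction-module-from-matrix} degenerate). Both $M_a$ and $N_b$ are then perfect in $\derived(\ca{C})$ because $M$ and $N$ are $\ca{C}$-perfect, which is built into sphericality by another application of \autoref{thm:adjoints}.

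For the second assertion, I would take the compact generators $\{h_c\}_{c \in \ca{C}}$ of $\derived(\ca{C}^{\mathrm{opp}})$ and evaluate $P \oL{R} h_c = P(-, c)$, which is the left $\ca{R}$-module $\rmodule{M(-, c)}{N(-, c)}^t$ with structure morphism inherited from \eqref{structure-morphism-P}. By the semiorthogonal decomposition \eqref{SOD-left-modules}, together with the fact that the embeddings $J_1$ and $J_2$ preserve compactness and that $M(-, c)$ and $N(-, c)$ are perfect (by the $\ca{A}$- and $\ca{B}$-perfectness of $M$ and $N$, again from sphericality), I would conclude that $P(-, c)$ is compact in $\derived(\ca{R}^{\mathrm{opp}})$.

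The only substantive obstacle I foresee is the bookkeeping needed to identify $f_P$ on representables and the evaluation $P(-, c)$ under the column-vector description of \autoref{subsect:modules-on-glued}; once these identifications are made the argument reduces mechanically to the perfectness of $M$ and $N$ on both sides that is built into the hypothesis that they are spherical.
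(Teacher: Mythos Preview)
Your argument for $\ca{C}$-perfectness is correct and matches the paper's.

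For $\ca{R}$-perfectness there is a genuine gap. In the semiorthogonal decomposition \eqref{SOD-left-modules} the components of $P(-,c)$ are \emph{not} $M(-,c)$ and $N(-,c)$: the $J_2$-component is indeed $N(-,c)$, but by the analogue of \eqref{dt-sod-KL} the $J_1$-component $J_1^L(P(-,c))$ is the cone of the structure map $\varphi \oL{B} N(-,c) \rightarrow M(-,c)$, not $M(-,c)$ itself. The column-vector description of left $\ca{R}$-modules mirrors instead the decomposition \eqref{SOD-left-modules-2}, in which one of the embeddings is $J_3^R$; but $J_3^R$ is only known to preserve compact objects once $\varphi$ is $\ca{A}$-perfect. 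Your argument as written would apply verbatim to any left $\ca{R}$-module whose matrix entries are compact, and \autoref{ex:inclusion-doesnt-preserve-cpt} shows this conclusion is false in general.

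The missing ingredient, which is what the paper actually supplies, is that $\varphi = M \ob{C} \dual{N}{C}$ is $\ca{A}$-perfect: by \autoref{thm:adjoints} and the $\ca{A}$-perfectness of $M$ this reduces to $\dual{N}{C}$ being $\ca{C}$-perfect, which holds because it is the dual of a $\ca{C}$-h-projective, $\ca{C}$-perfect module. With this in hand either SOD yields the result (in \eqref{SOD-left-modules-2} the components are literally $M(-,c)$ and $N(-,c)$; in \eqref{SOD-left-modules} the $J_1$-component is now a cone of a map between two compact objects).
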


\begin{proof}
	Applying the construction of \autoref{rmk:construction-module-from-matrix} to $\ca{R}^{\mathrm{opp}}$, we see that a left $\ca{R}$-module is $\ca{C}$-perfect if and only if its components are.
	Therefore, $P$ is $\ca{C}$-perfect.

	Recall that $\ca{R}$ is defined as the gluing of $\ca{B}$ and $\ca{A}$ along $M \ob{C} \dual{N}{C}[1]$.
	We claim that the bimodule $M \ob{C} \dual{N}{C}$ is $\ca{A}$-perfect, and therefore $\ca{R}$-perfectness of $P$ follows from \eqref{SOD-left-modules-2} and left perfectness of its components.

	By \autoref{thm:adjoints} and $\ca{A}$-perfectness of $M$, to prove that $M \ob{C} \dual{N}{C}$ is $\ca{A}$-perfect it is enough to prove that $\dual{N}{C}$ is $\ca{C}$-perfect.
	However, $\dual{N}{C} = \mathrm{Hom}_{\ca{C}}(N \oo{C} \barr{C}, \ca{C})$ is the dual of a $\ca{C}$ h-projective, $\ca{C}$-perfect module, and therefore it is still $\ca{C}$ h-projective and $\ca{C}$-perfect, see \cite[pag. 10]{Anno-Log-17} for an explanation of this fact.
\end{proof}

\subsection{The twist}

\begin{prop}
	\label{prop:description-twist}
	The twist around the functor \eqref{functor-f-P} is given by $t_N \circ t_M$.
\end{prop}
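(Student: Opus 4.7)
The plan is to compute $f_P f_P^R(S)$ via the SOD of $\derived(\ca{R})$ and conclude using the octahedron axiom.

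From the matrix description $P = \rmodule{M}{N}^t$ one reads off $f_P \circ I_1 = f_N$ and $f_P \circ I_2 = f_M$, where $I_1 = \ind{i_{\ca{B}}}$ and $I_2 = \ind{i_{\ca{A}}}$. Taking right adjoints (which exist by \autoref{lem:P-is-perfect} and \autoref{thm:adjoints}) yields $I_1^R f_P^R = f_N^R$ and $I_2^R f_P^R = f_M^R$. Applying the SOD of \autoref{SOD-prop-mine} to $f_P^R(S)$, and recalling that the left adjoint of $I_3^R$ is $I_2^R$ by \autoref{res-res-adjunction}, I obtain a distinguished triangle
\[
    I_1 f_N^R(S) \longrightarrow f_P^R(S) \longrightarrow I_3^R f_M^R(S).
\]
Applying $f_P$ gives $f_N f_N^R(S) \to f_P f_P^R(S) \to f_P I_3^R f_M^R(S)$.

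The crux is identifying the functor $f_P \circ I_3^R$. For $A \in \derived(\ca{A})$, I apply the SOD of \autoref{SOD-prop-KL} to $I_3^R(A)$; using $I_2^R I_3^R = \mathrm{id}$, $I_1^R I_3^R = 0$, and the triangle \eqref{dt-sod-KL} (with gluing bimodule $\varphi[1]$), I compute $I_1^L I_3^R(A) \simeq A \oL{A} \varphi[1] \simeq f_N^R f_M(A)[1]$. This yields a triangle
\[
    I_2(A) \longrightarrow I_3^R(A) \longrightarrow I_1 f_N^R f_M(A)[1].
\]
Applying $f_P$ and rotating produces $f_N f_N^R f_M(A) \to f_M(A) \to f_P I_3^R(A)$, whose first arrow is the counit by naturality in $A$ together with the fact that the transformation $I_2 I_2^R \to \mathrm{id}$ in the SOD triangle is itself the counit of $(I_2, I_2^R)$. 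Hence $f_P I_3^R(A) \simeq t_N(f_M A)$, and substituting $A = f_M^R(S)$ gives $f_P I_3^R f_M^R(S) \simeq t_N(f_M f_M^R S)$.

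Finally, I apply the octahedron axiom to the composition $f_N f_N^R(S) \to f_P f_P^R(S) \to S$, whose composite is the counit of $(f_N, f_N^R)$ at $S$ (because $(f_P I_1, I_1^R f_P^R) = (f_N, f_N^R)$ as adjoint pairs, so the composite of counits is the counit of the composite adjunction). The octahedron then yields a distinguished triangle
\[
    t_N(f_M f_M^R S) \longrightarrow t_N(S) \longrightarrow t_P(S),
\]
where the first arrow equals $t_N$ applied to the counit $f_M f_M^R(S) \to S$. Its cone is $t_N(t_M(S))$ by definition of $t_M$ and exactness of $t_N$. Therefore $t_P(S) \simeq t_N \circ t_M(S)$. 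The main technical obstacle is the naturality verification: confirming that all connecting morphisms in the SOD triangles, after applying $f_P$, correspond to the counits of the relevant adjunctions. This requires careful tracking of the structure morphism of $P$ and the explicit bar-category realisations of the SOD decompositions.
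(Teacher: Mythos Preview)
Your approach is genuinely different from the paper's. The paper never touches the SOD triangles or the octahedron; it computes the twist bimodule directly. Concretely, it writes down an explicit h-projective resolution of the diagonal $\ca{R}$-bimodule as a two-step convolution built from $\ca{R} \oo{A} \barr{A} \oo{A} \ca{R}$, $\ca{R} \oo{B} \barr{B} \oo{B} \ca{R}$, and a cross term, tensors this against $\dual{P}{C}$ on the left and $P$ on the right, takes the cone of the counit, and then recognises the resulting $2 \times 2$ convolution as the bar-tensor product of the two twist bimodules. Everything happens in $\barbimod{C}{C}$, so functoriality is automatic and there are no connecting-map identifications to chase.

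Your route via the two SOD decompositions and the octahedron is conceptually clean, and if carried out entirely in $\derived(\ca{C}\text{-}\ca{C})$ with $S$ the diagonal bimodule it would yield an isomorphism of bimodules rather than just objectwise isomorphisms. But the step you flag as a ``technical obstacle'' is exactly where the argument currently breaks. Two identifications are asserted without proof: that the rotated connecting map $f_N f_N^R f_M(A) \to f_M(A)$ coming from the SOD triangle for $I_3^R(A)$ is the counit of $f_N \dashv f_N^R$ (this requires tracing the gluing data through the structure morphism $\rho$ of $P$, which is doable but not done), and, more seriously, that the octahedral map $t_N(f_M f_M^R S) \to t_N(S)$ equals $t_N(\epsilon_M)$. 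The octahedron axiom only provides \emph{some} map satisfying certain commuting squares; it does not hand you a specific one, and your identification of the third vertex up to isomorphism does not pin down the arrow. To make this rigorous you would have to either lift the whole argument to the bar category (where cones are functorial), at which point you are essentially doing the paper's computation in disguise, or verify enough of the octahedral compatibilities to force the map, which you have not done.
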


\begin{proof}
	Notice that convolution shifted by $-1$ of following diagram in $\bimod{R}{R}$ (where $\mathrm{cmps}$ denotes the composition in the dg-category $\ca{R}$)
	\begin{equation}
	\label{diagonal-bimodule-h-proj}
	\begin{tikzcd}[column sep = 10em]
		\ca{R} \oo{A} \barr{A} \oo{A} \varphi \oo{B} \barr{B} \oo{B} \ca{R} \ar[d, "{-(\mathrm{id}^{\otimes 2} \otimes \mathrm{cmps}) \circ (\mathrm{id}^{\otimes 3} \otimes \tau \otimes \mathrm{id})}"] \ar[r, "{(\mathrm{cmps} \otimes \mathrm{id}^{\otimes 2}) \circ (\mathrm{id} \otimes \tau \otimes \mathrm{id}^{\otimes 3})}"] & \ca{R} \oo{B} \barr{B} \oo{B} \ca{R}\\
		\ca{R}  \oo{A} \barr{A} \oo{A} \ca{R}
	\end{tikzcd}
	\end{equation}
	is h-projective by \cite[Proposition 2.5]{Anno-Log-17} and quasi isomorphic to the diagonal bimodule by the distinguished triangle of \cite[Proposition 4.9]{KL15} (see also the proof of \cite[Proposition 3.11]{Lunts-Cat-res-sing}).
	Hence, we can use \eqref{diagonal-bimodule-h-proj} to compute $\dual{P}{C} \ob{R} P$.

	Notice that the bimodule $\dual{P}{C}$ is given by the matrix $\rmodule{\dual{M}{C}}{\dual{N}{C}}$ with structure morphism $\dual{M}{C} \ob{A} M \ob{C} \dual{N}{C} \xrightarrow{\mathrm{tr} \ob{} \mathrm{id}} \dual{N}{C}$.
	Hence, the bimodule $\dual{P}{C} \ob{R} P$ is homotopy equivalent to the convolution shifted by $-1$ of the following diagram in $\barbimod{R}{R}$
	\[
		\begin{tikzcd}[column sep = 10em]
			\dual{M}{C} \ob{A} M \ob{C} \dual{N}{C} \ob{B} N \ar[d, "{-\mathrm{id}^{\ob{} 2} \ob{} \mathrm{tr}}"] \ar[r, "{\mathrm{tr} \ob{} \mathrm{id}^{\ob{} 2}}"]&  \dual{N}{C} \ob{B} N\\
			\dual{M}{C} \ob{A} M
		\end{tikzcd}.
	\]
	The cone of the counit $\dual{P}{C} \ob{R} P \rightarrow \ca{C}$ is homotopy equivalent to the convolution of the following diagram in $\barbimod{R}{R}$
	\begin{equation}
		\label{description-twist}
		\begin{tikzcd}[column sep = 10em]
			\dual{M}{C} \ob{A} M \ob{C} \dual{N}{C} \ob{B} N \ar[d, "{-\mathrm{id}^{\ob{} 2} \ob{} \mathrm{tr}}"] \ar[r, "{\mathrm{tr} \ob{} \mathrm{id}^{\ob{} 2}}"]&  \dual{N}{C} \ob{B} N \ar[d, "\mathrm{tr}"]\\
			\dual{M}{C} \ob{A} M \ar[r, "\mathrm{tr}"] & \ca{C}
		\end{tikzcd}.
	\end{equation}
	However, we have an homotopy equivalence
	\[
		\eqref{description-twist} \simeq \left\{ \dual{M}{C} \ob{A} M \xrightarrow{\mathrm{tr}} \ca{C} \right\} \ob{C} \left\{ \dual{N}{C} \ob{B} N \xrightarrow{\mathrm{tr}} \ca{C}\right\}
	\]
	by \cite[Lemma 3.43]{Anno-Logvinenko-Bar-Categories}, and the latter bimodule represents the composition $t_N \circ t_M$.
	Hence we get the claimed description of the twist.
\end{proof}
\subsection{The cotwist}

\begin{prop}
	\label{prop:description-cotwist}
	The cotwist around the functor \eqref{functor-f-P} is described by the following matrix
	\begin{equation}
		\label{cotwist-bimodule-description}
		C_P = \left(
		\begin{array}{cc}
			C_M & 0 \\
			N \ob{C} \dual{M}{C} [-1] & C_N
		\end{array}
		\right)
		\in \derived(\ca{R}\text{-}\ca{R})
	\end{equation}
	with non-zero structure morphisms
	\[
		\begin{tikzcd}[column sep = 8em, row sep = tiny]
			N \ob{C} \dual{M}{C} [-1] \ob{A} M \ob{C} \dual{N}{C} \ar[r, "{\sigma \circ (\mathrm{id} \ob{} \mathrm{tr} \ob{} \mathrm{id})}"] & C_N\\
			M \ob{C} \dual{N}{C} \ob{A} N \ob{C} \dual{M}{C}[-1] \ar[r, "{\sigma \circ (\mathrm{id} \ob{} \mathrm{tr} \ob{} \mathrm{id})}"] & C_M.
		\end{tikzcd}
	\]
\end{prop}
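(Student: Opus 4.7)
My plan is to compute $C_P$ directly from its definition: it is the shifted cone of the unit map $\eta\colon \ca{R}\to P\ob{C}\dual{P}{C}$ in $\barbimod{\ca{R}}{\ca{R}}$. The appearance of $\dual{P}{C}$ as a lift of $f_P^R$ is legitimate because $P$ is $\ca{C}$-perfect by \autoref{lem:P-is-perfect}, so that the natural transformation $\eta_{\ca{C}}$ of \eqref{nat-transf-bar-dual-2} is a homotopy equivalence. I will work in the matrix formalism of \autoref{subsect:modules-on-glued}, compute $P\ob{C}\dual{P}{C}$ entry by entry, identify $\eta$ on each entry, and finally read off the structure morphisms of the cone.

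First, since $P$ is the column $\rmodule{M}{N}^t$ with trace-induced structure morphism \eqref{structure-morphism-P}, and $\dual{P}{C}$ is dually the row $\rmodule{\dual{M}{C}}{\dual{N}{C}}$ with structure morphism $\dual{M}{C}\ob{A}M\ob{C}\dual{N}{C}\xrightarrow{\mathrm{tr}\ob{}\mathrm{id}}\dual{N}{C}$, the $(i,j)$-entry of $P\ob{C}\dual{P}{C}$ is $P_i\ob{C}\dual{P}{C}_j$, yielding the matrix
\[
\bimodule{M\ob{C}\dual{M}{C}}{M\ob{C}\dual{N}{C}}{N\ob{C}\dual{M}{C}}{N\ob{C}\dual{N}{C}},
\]
whose non-zero structure morphisms are induced by $\mathrm{id}\ob{}\mathrm{tr}\ob{}\mathrm{id}$ applied on the left or right factor. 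Modelling the diagonal bimodule by the bar resolution \eqref{bar-complex}, the unit $\eta$ equals the action map \eqref{act-map-A}--\eqref{act-map-B} for $M$ (resp. $N$) on the $(\ca{A},\ca{A})$ (resp. $(\ca{B},\ca{B})$) diagonal, is forced to be zero on $(\ca{B},\ca{A})$, and on the off-diagonal $(\ca{A},\ca{B})$ it is the natural quasi-isomorphism between the two occurrences of $\varphi=M\ob{C}\dual{N}{C}$. This last identification is the technical crux: it amounts to unpacking how a section $\mu\in\varphi$ viewed as a morphism of $\ca{R}$ acts on $P$ through $\rho$ and tracking it through $\eta_{\ca{C}}$, and it is the one place where the bar-level bookkeeping with the resolution \eqref{bar-complex} and the structure morphism components given in \cite[(7.16)]{AL-P-n-func} has to be done carefully rather than formally.

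Once $\eta$ is identified, I take the shifted cone entry by entry. On the diagonals \autoref{dfn-sigma} gives $C_M$ and $C_N$; the $(\ca{A},\ca{B})$-entry vanishes as the cone of a quasi-isomorphism; the $(\ca{B},\ca{A})$-entry becomes $N\ob{C}\dual{M}{C}[-1]$, which by $\ca{C}$-perfectness and \eqref{nat-transf-bar-dual-2} is $\mathrm{RHom}_{\ca{C}}(M,N)[-1]$. The two remaining structure morphisms of $C_P$ are obtained by post-composing the inherited trace structure morphisms on $P\ob{C}\dual{P}{C}$ with the projections $M\ob{C}\dual{M}{C}[-1]\xrightarrow{\sigma} C_M$ and $N\ob{C}\dual{N}{C}[-1]\xrightarrow{\sigma} C_N$. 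Rewriting $\dual{N}{C}\ob{B}N\xrightarrow{\mathrm{tr}}\ca{C}$ (and its $M$ analogue) as composition of morphisms via the identifications $\varphi\simeq\mathrm{RHom}_{\ca{C}}(N,M)$ and $N\ob{C}\dual{M}{C}\simeq\mathrm{RHom}_{\ca{C}}(M,N)$, the resulting maps become exactly $\sigma\circ\mathrm{cmps}$ as claimed, while the two structure morphisms targeting the contractible $(\ca{A},\ca{B})$-entry vanish in the derived category. The hardest step throughout is the off-diagonal identification of $\eta$ described above; all other manipulations are formal and rely only on the matrix calculus for glued dg-categories and the definition of $\sigma$.
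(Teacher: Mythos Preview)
Your proposal is correct and follows essentially the same route as the paper: compute $P\ob{C}\dual{P}{C}$ as a $2\times2$ matrix with trace-type structure morphisms, identify the action map $\ca{R}\to P\ob{C}\dual{P}{C}$ entrywise, and read off the cotwist as the shifted cone. The one place where the paper is more explicit than your sketch is precisely at the point you flag as the crux: rather than just calling the $(\ca{A},\ca{B})$-component of the action map ``the natural quasi-isomorphism,'' the paper writes it out as a map of twisted complexes together with the two degree $-1$ morphisms required by the $\barbimod{\ca{R}}{\ca{R}}$ formalism (using the homotopies $\chi_{\ca{A}},\chi'_{\ca{B}}$ of \cite[Proposition~4.6]{Anno-Logvinenko-Bar-Categories}), and then exhibits the explicit matrix $\left(\begin{smallmatrix}\sigma&0\\\mathrm{id}&\sigma\end{smallmatrix}\right)$ from $P\ob{C}\dual{P}{C}$ to $C_P'[1]$ rather than contracting the acyclic entry after the fact.
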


\begin{proof}
	We have
	\[
		P \ob{C} \dual{P}{C} = \bimodule{M \ob{C} \dual{M}{C}}{M \ob{C} \dual{N}{C}}{N \ob{C} \dual{M}{C}}{N \ob{C} \dual{N}{C}}
	\]
	with structure morphisms
	\[
		\begin{array}{lr}
			\begin{tikzcd}[column sep = 4em]
				M \ob{C} \dual{M}{C} \ob{A} M \ob{C} \dual{N}{C} \ar[r, "\mathrm{id} \ob{} \mathrm{tr} \ob{} \mathrm{id}"] & M \ob{C} \dual{N}{C}
			\end{tikzcd} & 
			\begin{tikzcd}[column sep = 4em]
				N \ob{C} \dual{M}{C} \ob{A} M \ob{C} \dual{N}{C} \ar[r, "\mathrm{id} \ob{} \mathrm{tr} \ob{} \mathrm{id}"] & N \ob{C} \dual{N}{C}
			\end{tikzcd}\\
			\begin{tikzcd}[column sep = 4em]
				M \ob{C} \dual{N}{C} \ob{B} N \ob{C} \dual{M}{C} \ar[r, "\mathrm{id} \ob{} \mathrm{tr} \ob{} \mathrm{id}"] & M \ob{C} \dual{M}{C}
			\end{tikzcd} &
			\begin{tikzcd}[column sep = 4em]
				M \ob{C} \dual{N}{C} \ob{B} N \ob{C} \dual{N}{C} \ar[r, "\mathrm{id} \ob{} \mathrm{tr} \ob{} \mathrm{id}"] & M \ob{C} \dual{N}{C}
			\end{tikzcd}
		\end{array}
	\]

	Denote $C_P'$ the bimodule in the statement of the proposition.
	We aim to show that the morphism
	\[
		P \ob{C} \dual{P}{C}
		\xrightarrow{\left(
		\begin{array}{cc}
			\sigma & 0\\
			\mathrm{id} & \sigma
		\end{array}
		\right)}
		C_P'[1]
	\]
	in $\barbimod{R}{R}$ identifies $C_P'[1]$ with the cone of the action morphism $\ca{R} \xrightarrow{\mathrm{act}} P \ob{C} \dual{P}{C}$.

	Using the description of $\barr{R}$ given in \eqref{bar-complex}, we can describe the action map for $P$ in $\barbimod{R}{R}$.
	As the diagonal bimodule has no $\ca{B}\text{-}\ca{A}$ component, to give a map $\ca{R} \rightarrow P \ob{C} \dual{P}{C}$ in $\barbimod{R}{R}$ is enough to give a matrix of maps between the various components plus two degree $-1$ maps
	\[
		\begin{array}{lcr}
			\begin{tikzcd}[ampersand replacement = \&]
				\barr{A} \ob{A} M \ob{C} \dual{N}{C} \ar[r, dashed] \& M \ob{C} \dual{N}{C}
			\end{tikzcd}
			& \mathrm{and} &
			\begin{tikzcd}[ampersand replacement = \&]
				M \ob{C} \dual{N}{C} \ob{B} \barr{B} \ar[r, dashed] \& M \oo{C} \dual{N}{C}
			\end{tikzcd}
		\end{array}
	\]
	making the relevant diagrams commutes, see \cite[§ 7.2]{AL-P-n-func}, and in particular diagram (7.17) in \emph{ibidem}, for an explanation.

	The action map for $P$ is given by the by the matrix $\bimodule{\mathrm{act}}{\mathrm{\alpha}}{0}{\mathrm{act}}$, where $\alpha$ is map given by the map of twisted complexes in $\barbimod{A}{B}$
	\[
		\begin{tikzcd}[column sep = 6em]
			\barr{A} \oo{A} M \ob{C} \dual{N}{C} \oo{B} \barr{B} \ar[r, "{\lmodule{- \mathrm{id} \oo{} \tau}{\tau \oo{} \mathrm{id}}}"] & \underset{\mathrm{deg. 0}}{(\barr{A} \oo{A} M \ob{C} \dual{N}{C}) \oplus (M \ob{C} \dual{N}{C} \oo{B} \barr{B})} \ar[d, "\tau \otimes \mathrm{id} + \mathrm{id} \otimes \tau"]\\
			{} & M \ob{C} \dual{N}{C},
		\end{tikzcd}
	\]
	together with the maps of degree $-1$
	\[
		\begin{tikzcd}[column sep = 8em]
			\barr{A} \ob{A} M \ob{C} \dual{N}{C} \ar[r, dashed, "{- (\chi_{\ca{A}} \ob{} \mathrm{id}) \circ (\tau \ob{} \mathrm{id}^{\ob{} 2})}"] & M \ob{C} \dual{N}{C}
		\end{tikzcd}
	\]
	and
	\[
		\begin{tikzcd}[column sep = 8em]
			M \ob{C} \dual{N}{C} \ob{B} \barr{B} \ar[r, dashed, "{(\mathrm{id} \ob{} \chi'_{\ca{B}}) \circ (\mathrm{id}^{\ob{} 2} \ob{} \tau)}"] & M \oo{C} \dual{N}{C}
		\end{tikzcd}
	\]
	where we are using the notation of \cite[Proposition 4.6]{Anno-Logvinenko-Bar-Categories}.
	
	Given this description of the action morphism, the statement of the proposition follows.
\end{proof}

Notice that the above description of the cotwist tells us that the following diagrams commute
\begin{equation}
	\label{cotwist-restricted-to-a}
	\begin{array}{lcr}
	\begin{tikzcd}
		\derived(\ca{A}) \ar[d, "\ind{i_{\ca{A}}}"'] \ar[r, "c_M"] & \derived(\ca{A}) \ar[d, "\res{i^{R}_{\ca{A}}}"]\\ 
		\derived(\ca{R}) \ar[r, "c_P"] & \derived(\ca{R})
	\end{tikzcd}
	& &
	\begin{tikzcd}
		\derived(\ca{B}) \ar[d, "\ind{i_{\ca{B}}}"'] \ar[r, "c_N"] & \derived(\ca{B})\\ 
		\derived(\ca{R}) \ar[r, "c_P"] & \derived(\ca{R}) \ar[u, "\res{i_{\ca{B}}}"']
	\end{tikzcd}
	\end{array}.
\end{equation}
Indeed, the following maps are homotopy equivalences in $\barmod{R}$ and $\barmod{B}$, respectively
\begin{equation}
	\label{homotopy-equiv-ind-a}
	\begin{aligned}
		F_{\ca{A}} \ob{A} \ca{R} \ob{R} C_P \oo{R} \barr{R} \xrightarrow{\simeq} & F_{\ca{A}} \oo{A} \barr{A} \oo{A} \barr{R} \oo{R} C_P \oo{R} \barr{R}\xrightarrow{\mathrm{id}^{\otimes 2} \otimes \tau \otimes \mathrm{id} \otimes \tau}\\
		\rightarrow & F_{\ca{A}} \ob{A} C_P = \rmodule{F_{\ca{A}} \ob{A} C_M}{0}.
	\end{aligned}
\end{equation}
and
\begin{equation*}
	\begin{aligned}
		F_{\ca{B}} \ob{B} \ca{R} \ob{R} C_P \oo{B} \barr{B} \xrightarrow{\simeq} & F_{\ca{B}} \oo{B} \barr{B} \oo{B} \barr{R} \oo{R} C_P \oo{B} \barr{B}\xrightarrow{\mathrm{id}^{\otimes 2} \otimes \tau \otimes \mathrm{id} \otimes \tau}\\
		\rightarrow & F_{\ca{B}} \ob{B} (C_P)_{i_{\ca{B}}} = \rmodule{0}{F_{\ca{B}} \ob{B} C_N}.
	\end{aligned}
\end{equation*}

\begin{rmk}
	\label{rmk:HLS-theorem}
	The commutativity of the left square in \eqref{cotwist-restricted-to-a}, \autoref{SOD-prop-mine}, and the fact that $c_M$ is an autoequivalence tell us that there exists a SOD
	\[
		\derived(\ca{R}) = \langle c_P(\ind{i_{\ca{A}}}(\derived(\ca{A}))), \ind{i_{\ca{B}}}(\derived(\ca{B})) \rangle.
	\]
	Therefore, we see that for the spherical functor induced by the bimodule $P$ the hypotheses of \cite[Theorem 4.14]{Halpern-Shipman16} are satisfied.
	However, let us remark that we cannot use the theorem from {\it ibidem} to prove \autoref{twist-twist=twist} because in {\it ibidem} the authors assume that the functor is spherical to deduce that the twist around it splits, whereas we use the description of the twist to prove that the functor is spherical.
\end{rmk}

\begin{prop}
	\label{prop:cotwist-autoequivalence}
	The cotwist around the functor \eqref{functor-f-P} is an autoequivalence of $\derived(\ca{R})$.
\end{prop}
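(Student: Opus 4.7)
The plan is to show that $c_P$ is an autoequivalence by exploiting the two SODs on $\derived(\ca{R})$ given by \autoref{SOD-prop-KL} and \autoref{SOD-prop-mine}, reducing the claim to the fact that $c_M$ and $c_N$ are already known to be autoequivalences of $\derived(\ca{A})$ and $\derived(\ca{B})$. The key is that the commutative squares \eqref{cotwist-restricted-to-a} tell us precisely how $c_P$ interacts with both SODs: the left square gives an equivalence $c_P\vert_{\ind{i_{\ca{A}}}(\derived(\ca{A}))} \colon \ind{i_{\ca{A}}}(\derived(\ca{A})) \xrightarrow{\simeq} \res{i^R_{\ca{A}}}(\derived(\ca{A}))$ corresponding to $c_M$ under the respective embeddings, while the right square gives $\res{i_{\ca{B}}} \circ c_P \circ \ind{i_{\ca{B}}} \simeq c_N$, which is an autoequivalence.

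Using the matrix description of $C_P$ from \autoref{prop:description-cotwist} together with a homotopy equivalence analogous to \eqref{homotopy-equiv-ind-a}, one computes explicitly
\[
c_P(\ind{i_{\ca{B}}}(F_{\ca{B}})) \simeq \rmodule{F_{\ca{B}} \ob{B} N \ob{C} \dual{M}{C}[-1]}{F_{\ca{B}} \ob{B} C_N}
\]
with structure morphism induced by that of $C_P$. From here I would prove essential surjectivity of $c_P$ by decomposing an arbitrary $G \in \derived(\ca{R})$ via \autoref{SOD-prop-mine} as $\ind{i_{\ca{B}}}(G^B) \to G \to \res{i^R_{\ca{A}}}(G^A)$ and constructing a preimage $F$ whose SOD components relative to \autoref{SOD-prop-KL} are $\ind{i_{\ca{A}}}(c_M^{-1}(G^A))$ and $\ind{i_{\ca{B}}}(c_N^{-1}(G^B))$, with structure morphism obtained by transporting the gluing datum of $G$ through $c_M^{-1}$ and $c_N^{-1}$. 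For full faithfulness, I would decompose $\Hom_{\derived(\ca{R})}(F, F')$ via the SOD of \autoref{SOD-prop-KL} into Hom spaces in $\derived(\ca{A})$, in $\derived(\ca{B})$, and an extension term governed by the gluing bimodule $\varphi$, and then verify that $c_P$ induces an isomorphism on each summand using the equivalences $c_M$ and $c_N$ together with the explicit description of the cotwist.

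The main obstacle will be the careful tracking of structure morphisms. The object $c_P(F)$ is not assembled from its SOD components by a trivial gluing, but by one that depends on the non-zero off-diagonal entries of $C_P$ involving $\sigma$ and the composition map; ensuring that the candidate inverse construction reproduces these gluings (and that the Hom-space decomposition respects them under the equivalences $c_M$, $c_N$) is where the real work of the argument lies.
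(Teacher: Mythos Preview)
Your overall strategy and your full faithfulness argument are essentially the same as the paper's: both decompose via the SOD of \autoref{SOD-prop-KL} and check, using the explicit form of $C_P$, that $c_P$ is bijective on Homs between objects of $\ind{i_{\ca{A}}}(\derived(\ca{A}))$ and $\ind{i_{\ca{B}}}(\derived(\ca{B}))$ in all four directions. The paper carries this out case by case, and the computations rely exactly on the adjunctions and the sphericity of $M$ and $N$ that you anticipate using.

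Where you diverge is essential surjectivity. You propose to build an explicit preimage of $G$ by taking the components $c_M^{-1}(G^A)$ and $c_N^{-1}(G^B)$ and transporting the gluing datum. This is where your own \say{main obstacle} bites: $c_P$ does not send the SOD of \autoref{SOD-prop-KL} to the SOD of \autoref{SOD-prop-mine} cleanly, because $c_P(\ind{i_{\ca{B}}}(F_{\ca{B}}))$ has a nonzero $\ca{A}$-component $F_{\ca{B}} \ob{B} N \ob{C} \dual{M}{C}[-1]$. So the $\res{i^R_{\ca{A}}}$-component of $c_P(F)$ is not simply $c_M(F_{\ca{A}})$, and matching gluing data requires unwinding this mixing. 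It can be done, but it is exactly the bookkeeping you flag as the hard part. The paper sidesteps this entirely: once full faithfulness is known, $c_P$ has a right adjoint (being a tensor functor between derived categories of dg-categories), so $\derived(\ca{R}) = \langle \ker c_P^R, \im c_P \rangle$, and it suffices to show $\im(c_P)^{\perp} = 0$. This is immediate from the two commutative squares: the left one gives $\res{i^R_{\ca{A}}}(\derived(\ca{A})) \subset \im c_P$, which forces $G_{\ca{B}} = 0$ via the right square and the invertibility of $c_N$, and then $G_{\ca{A}} = 0$ follows. This orthogonal-complement argument is both shorter and avoids the structure-morphism tracking altogether.
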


\begin{proof}
	Let us assume that the cotwist is fully faithful, then we show that it is an autoequivalence.
	We will prove that the right orthogonal to the essential image of $c_P$ is zero, this will suffice because we know that $c_P$ has a right adjoint $c_P^R$, and the fully faithfulness assumption implies $\derived(\ca{R}) = \langle \ker (c_P^{R}), \im (c_P) \rangle$, where $\im(c_P)$ denotes the essential image of $c_P$.

	Take $G = \rmodule{G_{\ca{A}}}{G_{\ca{B}}} \in \im (c_P) ^{\perp}$.
	By \eqref{cotwist-restricted-to-a} we know that $\res{i^{R}_{\ca{A}}}(\derived(\ca{A})) \subset \im (c_P)$.
	Therefore, we get $G \in \res{i^{R}_{\ca{A}}}(\derived(\ca{A}))^{\perp}$, which, together with \autoref{SOD-prop-mine} and the right diagram in \eqref{cotwist-restricted-to-a}, implies for any $T_{\ca{B}} \in \derived(\ca{B})$
	\[
		0 = \mathrm{Hom}_{\derived(\ca{R})}(c_P(\ind{i_{\ca{B}}}(T_{\ca{B}})), G) \simeq \mathrm{Hom}_{\derived(\ca{B})}(c_N(T_{\ca{B}}), G_{\ca{B}}).
	\]
	As $c_N$ is an autoequivalence, we get $G_{\ca{B}} \simeq 0$.
	To conclude, we use that $\rmodule{G_{\ca{A}}}{0} \in \res{i^{R}_{\ca{A}}}(\derived(\ca{A}))^{\perp}$, and therefore $G_{\ca{A}} \simeq 0$.

	We now show that $c_P$ is fully faithful.
	First, notice that by the left square in \eqref{cotwist-restricted-to-a}, \autoref{SOD-prop-KL}, and \autoref{SOD-prop-mine}, we get that $c_P \vert_{\ind{i_{\ca{A}}}(\derived(\ca{A}))}$ is fully faithful.

	Next, take $F_{\ca{B}} \in \derived(\ca{B})$, $G_{\ca{A}} \in \derived(\ca{A})$ and consider $F = \ind{i_{\ca{B}}}(F_{\ca{B}}) = \rmodule{0}{F_{\ca{B}}}$ and $G = \ind{i_{\ca{A}}} (G_{\ca{A}}) = \rmodule{G_{\ca{A}} \oo{A} \barr{A}}{G_{\ca{A}} \ob{A} M \ob{C} \dual{N}{C}}$, where the structure morphism of the latter is given by the identity.
	Then, a morphism $\alpha : F \rightarrow G$ is given by a closed, degree $0$ morphism $\alpha_{\ca{B}} : F_{\ca{B}} \rightarrow G_{\ca{A}} \ob{A} M \ob{C} \dual{N}{C}$ in $\barmod{B}$. 
	The action of $C_P$ is given by
	\[
			F \ob{R} C_P \simeq \rmodule{F_{\ca{B}} \ob{B} N \ob{C} \dual{M}{C}[-1]}{F_{\ca{B}} \ob{B} C_N}
	\]
	with structure morphism
	\[
		F_{\ca{B}} \ob{B} N \ob{C} \dual{M}{C}[-1] \ob{A} M \ob{C} \dual{N}{C} \xrightarrow{(\mathrm{id} \ob{} \sigma) \circ (\mathrm{id}^{\ob{} 2} \ob{} \mathrm{tr} \ob{} \mathrm{id})} F_{\ca{B}} \ob{B} C_N,
	\]
	while $G \ob{R} C_P$ is homotopy equivalent to $\rmodule{G_{\ca{A}} \ob{A} C_M}{0}$ with a homotopy equivalence given by \eqref{homotopy-equiv-ind-a}.
	The morphism $\alpha$ is sent to the the morphism
	\begin{equation}
		\label{morphism-b-to-a}
		\begin{tikzcd}[column sep = 6em]
			F_{\ca{B}} \ob{B} N \ob{C} \dual{M}{C}[-1] \ar[r, "\alpha_{\ca{B}} \ob{} \mathrm{id}^{\ob{} 2}"] & G_{\ca{A}} \ob{A} M \ob{C} \dual{N}{C} \ob{B} N \ob{C} \dual{M}{C}[-1] \ar[d, "{(\mathrm{id} \ob{} \sigma) \circ (\mathrm{id}^{\ob{} 2} \ob{} \mathrm{tr} \ob{} \mathrm{id})}"']\\
			{} & G_{\ca{A}} \ob{A} C_M
		\end{tikzcd}
	\end{equation}

	A morphism $\beta : F \ob{R} C_P \rightarrow G \ob{R} C_P$ is given by a triple $(\beta_{\ca{A}}, 0, 0)$, where $\beta_{\ca{A}}$ is a closed, degree $0$ morphism $\beta_{\ca{A}} : F_{\ca{B}} \ob{B} N \ob{C} \dual{M}{C}[-1] \rightarrow G_{\ca{A}} \ob{A} C_M$ in $\barmod{A}$.
	However, using that $C_M$ induces an autoequivalence, that $\dual{M}{C}$ is homotopy equivalent to $\dual{M}{A} \ob{C} C_M[1]$ (and that under this homotopy equivalence the counit for $- \ob{C} \dual{M}{A} \dashv - \ob{A} M$ is identifed with $\sigma$ up to homotopy, see \autoref{rmk:cotwist-identifies-maps}), and the adjunctions $- \ob{B} N \dashv - \ob{C} \dual{N}{C}$, $- \ob{C} \dual{M}{A} \dashv - \ob{A} M$, we see that any such $\beta_{\ca{A}}$ comes (up to homotopy) from a unique (up to homotopy) morphism $\alpha_{\ca{B}}$ via \eqref{morphism-b-to-a}.
	Therefore, $c_P$ is fully faithful on morphisms from $\ind{i_{\ca{B}}}(\derived(\ca{B}))$ to $\ind{i_{\ca{A}}}(\derived(\ca{A}))$.

	Let us now consider $F = \rmodule{0}{F_{\ca{B}}}$ and $G = \rmodule{0}{G_{\ca{B}}}$.
	Then, a morphism $\alpha : F \rightarrow G$ is given by a triple $(0, \alpha_{\ca{B}}, 0)$, where $\alpha_{\ca{B}} : F_{\ca{B}} \rightarrow G_{\ca{B}}$ is a closed, degree $0$ mophism in $\barmod{B}$.
	Via $C_P$ this morphism is sent to the triple $( \alpha_{\ca{B}} \ob{} \mathrm{id}, \alpha_{\ca{B}} \ob{} \mathrm{id}, 0)$.
	In general, a morphism $\beta : F \ob{R} C_P \rightarrow G \ob{R} C_P$ is given by a triple of morphisms $(\beta_{\ca{A}}, \beta_{\ca{B}}, \beta_{\ca{A}\ca{B}})$ such that the following is a closed, degree $0$ morphism of twisted complexes in $\barmod{B}$
	\begin{equation*}
		\begin{tikzcd}
			F_{\ca{B}} \ob{B} N \ob{C} \dual{M}{C}[-1] \ob{A} M \ob{C} \dual{N}{C} \ar[r] \ar[rd, dashed, "\beta_{\ca{A}\ca{B}}"] \ar[d, "\beta_{\ca{A}} \ob{} \mathrm{id}"] & F_{\ca{B}} \ob{C} C_N \ar[d, "\beta_{\ca{B}} \ob{} \mathrm{id}"]\\
			G_{\ca{B}} \ob{B} N \oo{C} \dual{M}{C}[-1] \ob{A} M \ob{C} \dual{N}{C} \ar[r] & G_{\ca{B}} \ob{C} C_N
		\end{tikzcd}
	\end{equation*}
	Using adjunction, that $C_N$ induces an autoequivalence, and that $N$ is a spherical bimodule, we can deduce that $\beta$ comes (up to homotopy) from a unique (up to homotopy) $\alpha_{\ca{B}}$.
	Therefore, $c_P$ is fully faithful for morphisms from $\ind{i_{\ca{B}}}(\derived(\ca{B}))$ to $\ind{i_{\ca{B}}}(\derived(\ca{B}))$.

	Finally, consider $F = \rmodule{0}{F_{\ca{B}}}$ and $G = \rmodule{G_{\ca{A}} \oo{A} \barr{A}}{G_{\ca{A}} \ob{A} M \ob{C} \dual{N}{C}}$.
	There are no morphisms in $\derived(\ca{R})$ from $G$ to $F$, and thus we have to prove that any morphism $\beta : G \ob{R} C_P \rightarrow F \ob{R} C_P$ is homotopic to zero.
	The morphism $\beta$ is given by a triple $(\beta_{\ca{A}}, 0, \beta_{\ca{A}\ca{B}})$ such that the following is a closed, degree $0$ morphism of twisted complexes in $\barmod{B}$
	\begin{equation*}
		\begin{tikzcd}
			G_{\ca{A}} \ob{A} C_M \ob{A} M \ob{C} \dual{N}{C} \ar[r] \ar[dr, dashed, "\beta_{\ca{A}\ca{B}}"] \ar[d, "{\beta_{\ca{A}} \ob{} \mathrm{id}}"]& 0 \ar[d]\\
			F_{\ca{B}} \ob{B} N \ob{C} \dual{M}{C}[-1] \ob{A} M \ob{C} \dual{N}{C} \ar[r] & F_{\ca{B}} \ob{B} C_N
		\end{tikzcd}
	\end{equation*}

	Using that $C_N$ is an autoequivalence, that $N$ is spherical, and adjunction we deduce that $\beta$ is homotopic to $0$.

	This proves that $c_P$ is fully faithful and concludes the proof of the proposition.
\end{proof}

\section{Examples}
\subsection{Spherical objects}
\label{subsect:spherical-objs}

In this subsection we consider the example of spherical objects.

Let $\ca{C}$ be a small dg-category.
Notice that every module $E \in \derived(\ca{C})$ can be considered $E$ as a $\star_{k}\text{-}\ca{C}$ bimodule where $\star_{k}$ is the dg-category with a single object such that $\Hom_{\star_k}(\star_k, \star_k) = k$; here $k$ sits in degree 0.
For such a bimodule being $\star_{k}$-perfect means that, for every $c \in \ca{C}$, the complex $E(c)$ is bounded and has finite dimensional cohomologies.

For any $E \in \derived(\ca{C})$ we will denote $\Hom^{\bullet}_{\derived(\ca{C})}(E,E)= \bigoplus_{i \in \mathbb{Z}} \Hom_{\derived(\ca{C})}(E, E[i])[-i]$ the graded algebra underlying $\text{RHom}_{\ca{C}}(E,E)$.

For the rest of this section we will assume that the category $\derived(\ca{C})^c$ has a Serre functor $\mb{S}$ which is a tensor functor.
We fix $S_{\ca{C}} \in \barbimod{C}{C}$ enhancing $\mb{S}$.

\begin{dfn}
	Let $E \in \derived(\ca{C})$, we say that $E$ is a {\it d-spherical object} if the following three conditions are satisfied:
	\begin{enumerate}
		\item $E$ is both $\star_{k}$- and $\ca{C}$-perfect.
		\item $\Hom^{\bullet}_{\derived(\ca{C})}(E,E) \simeq k[t] /t^2$, $\deg (t) = d$, as graded algebras.
		\item $E \oL{C} S_{\ca{C}} \simeq E[d]$ in $\derived(\ca{C})$. \label{Serre-action}
	\end{enumerate}
\end{dfn}

\begin{rmk}
	The reason why we require $\derived(\ca{C})^c$ to have a Serre functor is to simplify the above definition.
	If we didn't have a Serre functor, instead of \eqref{Serre-action} above we would have to require the existence of an isomorphism $E^{\widetilde{\ca{C}}} \simeq E^{\widetilde{\star_k}}[-d]$ in $\derived(\ca{C})$.
	While we could get by with this for the proof of \autoref{thm:spherical-objects}, we would run into functoriality issues in the proof of \autoref{thm:cotwist-serre-duality}
\end{rmk}

\begin{rmk}
	If $\derived(\ca{C}) \simeq \derived_{\qc}(X)$ for some smooth, projective variety $X$ of dimension $d$, then an object $E \in \derived(\ca{C})$ is d-spherical if and only if the image in $\derived_{\qc}(X)$ is spherical according to the standard definition given in \cite{Seidel-Thomas01}.
\end{rmk}

The following is well known

\begin{thm}[\cite{Seidel-Thomas01}]
	\label{thm:spherical-objects}
	For a $d$-spherical object $E$ the functor $- \stackrel{L}{\otimes}_{\star_k} E : \derived(\star_k) \rightarrow \derived(\ca{C})$ is spherical.
\end{thm}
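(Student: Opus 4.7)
The plan is to verify two of the four equivalent conditions in \autoref{def-spherical-functor} and then invoke \autoref{thm:spherical-functors}. The natural choices are to show that the cotwist $c_E$ is an autoequivalence (which will be a shift) and that the natural map $f_E^R \to c_E f_E^L[1]$ is an isomorphism, the latter following from the Serre functor condition \eqref{Serre-action} in the definition of a spherical object.

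First I would set up the adjoints. By \autoref{thm:adjoints} and the hypothesis that $E$ is both $\star_k$- and $\ca{C}$-perfect, the functor $f_E = - \oL{\star_k} E$ admits both adjoints, realised by the bimodules $E^{\widetilde{\ca{C}}}$ (right) and $E^{\widetilde{\star_k}}$ (left). Then the composite $f_E^R f_E$ sends $V \in \derived(\star_k)$ to $V \otimes_k \mathrm{RHom}_{\ca{C}}(E,E)$, which by hypothesis (2) is isomorphic to $V \otimes_k (k \oplus k[-d])$. The unit $V \to V \otimes_k \Hom^\bullet(E,E)$ is inclusion onto the degree-zero summand spanned by $\mathrm{id}_E$, so the distinguished triangle $c_E \to \id \to f_E^R f_E$ identifies $c_E$ with the shift $[-d-1]$, which is manifestly an autoequivalence.

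Next I would verify condition (4) of \autoref{def-spherical-functor}, namely that $f_E^R \to c_E f_E^L[1]$ is an isomorphism. Since $c_E[1] \simeq [-d]$, this amounts to producing a natural isomorphism $E^{\widetilde{\ca{C}}} \simeq E^{\widetilde{\star_k}}[-d]$ in $\derived(\ca{C}\text{-}\star_k)$. For this I would use the Serre functor hypothesis: for a compact object $M \in \derived(\ca{C})^c$ with Serre functor $\mb{S}$, one has a functorial identification $M^{\widetilde{\ca{C}}} \simeq \mb{S}(M)^{\widetilde{\star_k}}$, enhanced by the bimodule $S_{\ca{C}}$. Applying this to $E$ and using $E \oL{\ca{C}} S_{\ca{C}} \simeq E[d]$ gives $E^{\widetilde{\ca{C}}} \simeq E^{\widetilde{\star_k}}[-d]$, and this isomorphism matches the canonical comparison map $f_E^R \to c_E f_E^L[1]$ because both maps are constructed from units/counits of the same adjunctions together with the identification of the cotwist as a shift.

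The main obstacle will be ensuring that the identification in the previous paragraph is truly the natural map appearing in condition (4), rather than merely an abstract isomorphism of functors: this requires carefully tracking the enhancement $S_{\ca{C}}$ through the chain of adjunctions and the definition of $\sigma$ (see \autoref{rmk:cotwist-identifies-maps}), and checking that the counit of $f_E^L \dashv f_E$ coincides with the one obtained from Serre duality up to the shift provided by condition \eqref{Serre-action}. Once this compatibility is in place, \autoref{thm:spherical-functors} applies and $f_E$ is spherical.
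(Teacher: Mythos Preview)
The paper does not actually prove this theorem: it is stated as well known with a citation to \cite{Seidel-Thomas01}, and no argument is given. So there is nothing to compare your proposal against on the paper's side.

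That said, your outline is the standard modern proof via \autoref{thm:spherical-functors}, and it is essentially correct. Computing the cotwist as $[-d-1]$ from condition~(2) of the definition is routine, and the identification $E^{\widetilde{\ca{C}}} \simeq \mb{S}(E)^{\widetilde{\star_k}} \simeq E^{\widetilde{\star_k}}[-d]$ via the Serre functor is exactly the content of condition~\eqref{Serre-action}. The one point you flag as an obstacle---that the abstract isomorphism must coincide with the canonical map in condition~(4)---is genuine, but in this case it can be disposed of more cheaply than by a full diagram chase: both $f_E^R$ and $c_E f_E^L[1]$ are represented by the left $\ca{C}$-module $E^{\widetilde{\star_k}}[-d]$, and one checks that the canonical map is nonzero (it is built from nondegenerate units and counits), after which it is an isomorphism because $\Hom_{\derived(\ca{C}^{\mathrm{opp}})}(E^{\widetilde{\star_k}},E^{\widetilde{\star_k}}) \simeq \Hom_{\derived(\ca{C})}(E,E) = k$. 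Alternatively, since $\derived(\star_k)$ is so simple, one can bypass condition~(4) entirely and verify condition~(1) directly by showing that $t_E t_E^{-1} \simeq \id$ on the compact generators, which is closer to the original Seidel--Thomas argument.
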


Let us now consider d-spherical objects $E_1, \dots, E_n$.
We apply \autoref{twist-twist=twist} inductively and we obtain that the functor $t_{E_n} \circ \dots \circ t_{E_1}$ can be realised as the twist around the functor
\begin{equation}
	\label{many-spherical-objects}
	\derived(\ca{R}) \xrightarrow{- \oL{R} \left( E_n \oplus \dots \oplus E_1 \right)} \derived(\ca{C}),
\end{equation}
where the dg-category $\ca{R}$ is the dg-category with objects $\{1, \dots, n\}$ and morphisms\footnote{We think of the object $i$ as the one corresponding to the $i$-th copy of the category $\star_k$.}
\[
	\Hom_{\ca{R}}(i,j) = 
	\left\{
		\begin{array}{ll}
			0 & i < j\\
			k & i = j\\
			\mathrm{H} \overline{\mathrm{om}}_{\ca{C}}(E_i, E_j) & i > j
		\end{array}
	\right..
\]

Therefore, we see that $\derived( \ca{R} ) = \derived( R)$ where
\begin{equation}
	\label{R-spherical-objs}
	R = \bigoplus_{i=1}^{n} k \, \mathrm{id}_{E_i} \oplus \bigoplus_{i > j} \mathrm{H} \overline{\mathrm{om}}_{\ca{C}}(E_i, E_j)
\end{equation}
(considered as a subalgebra of $\mathrm{H} \overline{\mathrm{om}}_{\ca{C}} ( \oplus_{i=1}^n E_i, \oplus_{i=1}^{n} E_j)$), and \eqref{many-spherical-objects} can be rewritten as
\begin{equation}
	\label{many-spherical-objects-algebra}
	\derived(R) \xrightarrow{- \stackrel{L}{\otimes}_R \left( E_n \oplus \dots \oplus E_1 \right)} \derived(\ca{C}).
\end{equation}

\begin{rmk}
	The semiorthogonal decomposition arising from the combination of \autoref{SOD-prop-mine}, \autoref{ex:upper-triangular-dg-algebra}, and \autoref{rmk:SOD-for-all} gives us an interpretation of the category $\derived(R)$ as that of the derived category of modules over the path algebra of a quiver with relations.
	Indeed, one can think of a quiver with $n$-vertices and arrows from $i$ to $j$ labelled by $\mathrm{H} \overline{\mathrm{om}}_{\ca{C}}(E_i, E_j)$ whenever $i > j$, $0$ if $i < j$, and by $k$ if $i = j$.
	We draw the example $n = 4$
	\[
	\begin{tikzcd}[row sep = -0.2em]
		4 \ar[r] \ar[rr, bend left] \ar[rrr, bend left]& 3 \ar[r] \ar[rr, bend right] & 2 \ar[r] & 1\\
	\end{tikzcd}
	\]
\end{rmk}

\begin{ex}
	Let us give a first geometric example of the above construction; we thank Timothy Logvinenko for explaining it to us.
	Let $X$ be a smooth, projective variety, and consider two spherical objects $E,F \in \derived^{\bounded}(X)$ such that $\Hom^{\bullet}_{\derived^{\bounded}(X)}(E,F) = \Hom_{\derived^{\bounded}(X)}(E,F[1])[-1] = \mb{C}^2[-1]$.
	Consider $\mathcal{U} \in \derived^{\bounded}(\mb{P}^1 \times X)$ the universal family that parametrises non-zero extensions of $F$ by $E$ up to the action of $\mb{C}^{\times}$.
	This object has the property that its fibre over any $p \in \mb{P}^1$ gives the corresponding extension of $F$ by $E$.
	Considering $\mathcal{U}$ as a Fourier Mukai kernel, we get a functor $\Phi : \derived^{\bounded}(\mb{P}^1) \rightarrow \derived^{\bounded}(X)$ which is spherical, and whose twist is the composition $t_{E} \circ t_{F}$.
	This can be seen as an example of the above theorem considering the spherical objects $E$ and $F[1]$.
	Indeed, in this case the algebra \eqref{R-spherical-objs} is $k \oplus k^2 \oplus k$, which is the endomorphism algebra of the object $\ca{O}_{\mb{P}^1}(-1) \oplus \ca{O}_{\mb{P}^1}$ in $\derived^{\bounded}(\mb{P}^1)$, and therefore we have an equivalence
	\[
		\derived(k \oplus k^2 \oplus k) \simeq \derived_{\qc}(\mb{P}^1)
	\]
	under which the functor $f$ of \autoref{twist-twist=twist} gets identifed with $\Phi$.
\end{ex}

\begin{ex}
	Another geometric example of the above construction is given in \cite[Corollary 3.1.5]{Barb-Flop-flop}.
\end{ex}

\begin{ex}
	We thank Tobias Dyckerhoff for explaining to us the following symplectic interpretation of the above result.

	Consider $f : E \rightarrow \mb{D}$ a Lefschetz fibration with base the disk $\mb{D}$ with $n$ marked points $p_1, \dots, p_n$ corresponding to the critical points of $f$.
	Assume for simplicity that $p_i \neq 1$ for any $i$, and denote $X = f^{-1}(1)$ the smooth fibre of $f$.
	To $X$ we can associate the Fukaya--Seidel category $\mathrm{Fuk}(X)$, which in this case is generated by the vanishing cycles $S_i$'s associated to the $p_i$'s.
	The fundamental group $\pi_1 (\mb{D} \setminus \{ p_1, \dots, p_n \}, 1)$ acts on $\derived^{\pi}(\mathrm{Fuk}(X))$ via a braid group action whose generators are given by the Dehn twists around the spherical objects $S_1$, $\dots$, $S_n$.

	We can also define the directed Fukaya--Seidel category $\mathrm{Fuk}^{\rightarrow}(f)$ of $f$, see \cite[§ 6]{Seidel-vanishing-cycles}.
	This category is generated by vanishing thimbles associated to the $p_i$'s, {\it i.e.} the vanishing cycle together with the choice of a vanishing path.
	We then get a functor $\partial : \derived^{\pi}(\mathrm{Fuk}^{\rightarrow}(f)) \rightarrow \derived^{\pi}(\mathrm{Fuk}(X))$ given by sending each vanishing thimble to its boundary (which is the corresponding vanishing cycle).

	The functor $\partial$ is spherical, and the spherical twist around it is the {\it total monodromy action}.
	More precisely, $t_{\partial}$ is the composition of the Dehn twists around the $S_i$'s.
	The connection with \autoref{thm:spherical-objects} is that $\derived^{\pi}(\mathrm{Fuk}^{\rightarrow}(f))$ is the category $\derived(R)^c$ for $R$ as defined in \eqref{R-spherical-objs} with respect to the vanishing cycles $S_i$'s, see {\it ibidem}.
\end{ex}

The dg-algebra $R$ defined in \eqref{R-spherical-objs} is smooth (being the gluing of smooth dg-algebras along perfect bimodules) and proper.
Therefore, the category $\derived(R)^c$ has a Serre Duality functor given by tensor product with $R^{\ast} := \mathrm{RHom}_{k}(R,k)$, see \cite{shklyarov2007serre}.
We now describe the cotwist around \eqref{many-spherical-objects-algebra} in terms of Serre Duality for the category $\derived(R)^c$.
We have

\begin{thm}
	\label{thm:cotwist-serre-duality}
	The cotwist around \eqref{many-spherical-objects-algebra} is given by tensor product with $R^{\ast}[-1-d]$.
\end{thm}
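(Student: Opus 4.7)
The plan is to proceed by induction on $n$, using \autoref{twist-twist=twist} as the inductive engine and Serre duality on $\derived(\ca{C})^c$ to identify the output with $R^{\ast}[-d-1]$.

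For the base case $n=1$ we have $R = k$, so $R^{\ast} = k$ and the claim reduces to computing the cotwist of $-\oL{\star_k} E_1$. The right adjoint is $\mathrm{RHom}_{\ca{C}}(E_1,-)$, and the unit $k \to \mathrm{RHom}_{\ca{C}}(E_1,E_1) \simeq k \oplus k[-d]$ has shifted cone $k[-d-1]$, giving $c \simeq [-d-1]$, as required.

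For the inductive step, suppose the statement holds for $E_1,\dots,E_{n-1}$, so that the composition $t_{E_{n-1}} \circ \dots \circ t_{E_1}$ is realised as the twist around a bimodule $M \in \derived(R_{n-1}\text{-}\ca{C})$ with cotwist $C_M \simeq R_{n-1}^{\ast}[-d-1]$. Set $N = E_n$ with source $\star_k$ and cotwist $C_N \simeq k[-d-1]$. Applying \autoref{twist-twist=twist}, the composition $t_{E_n} \circ t_{E_{n-1}} \circ \dots \circ t_{E_1}$ is realised as the twist around a spherical bimodule $P \in \derived(R_n\text{-}\ca{C})$, where $R_n = \star_k \sqcup_{\varphi} R_{n-1}$ with $\varphi = \mathrm{RHom}_{\ca{C}}(E_n,M)$, and whose cotwist is the bimodule
\[
	C_P \;\simeq\; \left( \begin{array}{cc} R_{n-1}^{\ast}[-d-1] & 0 \\ \mathrm{RHom}_{\ca{C}}(M,E_n)[-1] & k[-d-1] \end{array} \right).
\]
On the other hand, the diagonal $R_n$-bimodule is represented by the matrix with entries $R_{n-1}$, $\mathrm{RHom}_{\ca{C}}(E_n,M)$, $0$, $k$ (upper-right is $\varphi$, lower-left is $0$), so that taking the $k$-linear dual swaps the two off-diagonal entries and yields
\[
	R_n^{\ast}[-d-1] \;\simeq\; \left( \begin{array}{cc} R_{n-1}^{\ast}[-d-1] & 0 \\ \mathrm{RHom}_{\ca{C}}(E_n,M)^{\ast}[-d-1] & k[-d-1] \end{array} \right).
\]

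The entry-wise identification of these two matrices follows from Serre duality on $\derived(\ca{C})^c$: since each $E_i$ satisfies $E_i \oL{C} S_{\ca{C}} \simeq E_i[d]$, the Serre functor acts on $M = E_{n-1} \oplus \dots \oplus E_1$ and on $E_n$ as $[d]$, yielding
\[
	\mathrm{RHom}_{\ca{C}}(E_n,M)^{\ast} \;\simeq\; \mathrm{RHom}_{\ca{C}}(M, \mb{S}(E_n)) \;\simeq\; \mathrm{RHom}_{\ca{C}}(M,E_n)[d],
\]
so the off-diagonal blocks agree after shifting by $[-d-1]$. The diagonal blocks agree by the inductive hypothesis and the base case.

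The main obstacle is showing that the two bimodules agree as bimodules, not merely entry-wise: namely, that the off-diagonal structure morphism $\sigma \circ \mathrm{cmps}$ appearing in $C_P$ corresponds, under the two Serre duality isomorphisms, to the dual of the multiplication map in the dg-algebra $R_n$ that defines the structure morphism of $R_n^{\ast}$. The strategy here is to rewrite $\sigma$ for the spherical objects $E_i$ and $E_n$ using \autoref{rmk:cotwist-identifies-maps}, which identifies it with the counit of the relevant adjunction up to the cotwist shift $[-d-1]$; the Serre duality isomorphism is precisely adjunction with the Serre functor, so under this identification $\sigma \circ \mathrm{cmps}$ becomes the $k$-dual of the composition $\varphi \otimes_{R_{n-1}} R_{n-1} \to \varphi$ and $k \otimes_k \varphi \to \varphi$, which is exactly the structure morphism of $R_n^{\ast}$ viewed as a bimodule. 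This identifies $C_P \simeq R_n^{\ast}[-d-1]$ and completes the inductive step.
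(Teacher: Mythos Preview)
Your approach is essentially the same as the paper's---identify the matrix entries of $C_P$ and of $R^{\ast}[-d-1]$ via Serre duality, then argue that the componentwise isomorphisms assemble into a bimodule isomorphism---but organised as an induction rather than a direct computation. The paper treats $n=2$ explicitly (and asserts the general case is similar), writing both bimodules as $2\times 2$ matrices with entries $\mathrm{Hom}^{\bullet}(E_i,E_j)$ and their duals, and then lifts the matrix of Serre duality isomorphisms to a morphism in $\derived(R\text{-}R)$ by invoking an argument analogous to \cite[Lemma~7.3]{AL-P-n-func} (which handles the lifting when the top-right component vanishes).

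The inductive organisation creates a genuine difficulty you have not fully addressed. In the inductive step the second structure morphism of $C_P$ is
\[
\mathrm{RHom}_{\ca{C}}(E_n,M)\otimes_k \mathrm{RHom}_{\ca{C}}(M,E_n)[-1]\xrightarrow{\sigma\circ\mathrm{cmps}} C_M,
\]
where $\sigma$ here is the map \eqref{eqn:sigma-dfn} for the \emph{composite} bimodule $M$, not for a single spherical object. Your appeal to \autoref{rmk:cotwist-identifies-maps} and ``the spherical objects $E_i$ and $E_n$'' does not directly apply: you would need to know not just that $C_M\simeq R_{n-1}^{\ast}[-d-1]$ (the inductive hypothesis) but also how $\sigma_M$ behaves under this identification, i.e.\ you would have to carry a strengthened inductive hypothesis tracking $\sigma$. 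The paper's direct approach sidesteps this: in the $n\times n$ description every $\sigma$ that appears is for a single $E_i$, so one can use the concrete identification $\sigma_{E_i}=\mathrm{pr}_{1+d}$ and then appeal to the lifting lemma. If you want to keep the induction, you should either strengthen the hypothesis to include an explicit description of $\sigma_M$, or unfold the cotwist at the final step into the full $n\times n$ matrix before comparing with $R^{\ast}$.
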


\begin{proof}
	We give the proof for the case $n = 2$, the general case being similar.
	By \autoref{prop:description-cotwist} we know that the cotwist is described by the matrix\footnote{We can pass from $\mathrm{RHom}_{\ca{C}}(E_1,E_2)$  to the underlying graded vector space because the category of $k\text{-}k$ bimodules is semisimple.}
	\[
		\bimodule{\mathrm{Hom}_{\derived(\ca{C})}(E_1,E_1[d])[-1-d]}{0}{\mathrm{Hom}^{\bullet}_{\derived(\ca{C})}(E_1,E_2)[-1]}{\mathrm{Hom}_{\derived(\ca{C})}(E_2,E_2[d])[-1-d]}
	\]
	with structure morphisms
	\[
		\begin{aligned}
			&\mathrm{Hom}^{\bullet}_{\derived(\ca{C})}(E_1,E_2)[-1] \otimes_k \mathrm{Hom}^{\bullet}_{\ca{C}}(E_2, E_1) \xrightarrow{\mathrm{pr}_{1+d} \circ \mathrm{cmps}} \mathrm{Hom}_{\derived(\ca{C})}(E_2,E_2[d])[-1-d]\\
			&\mathrm{Hom}^{\bullet}_{\derived(\ca{C})}(E_2, E_1) \otimes_k \mathrm{Hom}^{\bullet}_{\derived(\ca{C})}(E_1,E_2)[-1] \xrightarrow{\mathrm{pr}_{1+d} \circ \mathrm{cmps}} \mathrm{Hom}_{\derived(\ca{C})}(E_1,E_1[d])[-1-d],
		\end{aligned}
	\]
	where $\mathrm{pr}_{1+d}$ is the projection to the degree $1+d$ part and $\mathrm{cmps}$ is the composition of morphisms.

	The bimodule $R^{\ast}[-1-d]$ is given by the matrix
	\[
		\bimodule{\mathrm{Hom}_{\derived(\ca{C})}(E_1,E_1)^{\ast}[-1-d]}{0}{\mathrm{Hom}^{\bullet}_{\derived(\ca{C})}(E_2,E_1)^{\ast}[-1-d]}{\mathrm{Hom}_{\derived(\ca{C})}(E_2,E_2)^{\ast}[-1-d]}
	\]
	with structure morphisms
	\[
		\begin{aligned}
			&\mathrm{Hom}^{\bullet}_{\derived(\ca{C})}(E_2,E_1)^{\ast}[-1-d] \otimes_k \mathrm{Hom}^{\bullet}_{\ca{C}}(E_2, E_1) \xrightarrow{\mathrm{ev}} \mathrm{Hom}_{\derived(\ca{C})}(E_2,E_2)^{\ast}[-1-d]\\
			&\mathrm{Hom}^{\bullet}_{\derived(\ca{C})}(E_2, E_1) \otimes_k \mathrm{Hom}^{\bullet}_{\derived(\ca{C})}(E_2,E_1)^{\ast}[-1-d] \xrightarrow{\mathrm{ev}} \mathrm{Hom}_{\derived(\ca{C})}(E_1,E_1)^{\ast}[-1-d].
		\end{aligned}
	\]

	To conclude we now consider the morphism of bimodules induced by the matrix of morphisms $\bimodule{\alpha_{11}}{0}{\alpha_{12}}{\alpha_{22}}$ where $\alpha_{ij} : \mathrm{Hom}^{\bullet}_{\derived(\ca{C})}(E_i,E_j)[-1] \rightarrow \mathrm{Hom}_{\derived(\ca{C})}^{\bullet}(E_j,E_i)^{\ast}[-1-d]$ is the isomorphism given by Serre Duality.
	This matrix of morphisms can be lifted to a morphism in the derived category of bimodules using an argument similar to \cite[Lemma 7.3]{AL-P-n-func} for the case in which the top right components of the bimodules are zero.
	As the components of the morphism are quasi-isomorphisms, the statement follows.
\end{proof}
\subsection{\texorpdfstring{$\mb{P}$-objects}{P-objects}}
\label{subsection:P-objects}

In this section we consider the case of $\mb{P}$-objects.

Let $\ca{C}$ be a small dg-category over a field $k$ such that $\derived(\ca{C})^c$ has a Serre functor $\mb{S}$ which is a tensor functor: $\mb{S}(-) = - \oL{C} S_{\ca{C}}$.
$\mb{P}^n$-objects were first introduced in \cite{Huyb-Thomas06} where it was proved that given a $\mb{P}^n$-object $P$ in the derived category of a smooth projective variety $X$ one can construct an autoequivalence of $\derived^{\bounded}(X)$ called the $\mb{P}$-twist around $P$.

\begin{rmk}
	The definition of $\mb{P}^n$-objects has been later generalized to that of split $\mb{P}^n$-functors in \cite{Add-symmetries.hyperkahler}, \cite{Cautis-Flops-and-about}, and further to general $\mb{P}^n$-functors in \cite{AL-P-n-func}.
\end{rmk}

Recall the notation $\Hom_{\derived(\ca{C})}^{\bullet}(E,E) = \bigoplus_{n \in \mathbb{Z}} \text{Hom}_{\derived(\ca{C})}(E,E[n])[-n]$ for any $E \in \derived(\ca{C})$.

\begin{dfn}
	\label{def-p-obj}
	An object $P \in \derived(\ca{C})$ is said to be a $\mb{P}^n$-object if the following conditions are satisfied:
	\begin{enumerate}
		\item $P$ is both $\star_{k}$- and $\ca{C}$-perfect.
		\item $\Hom^{\bullet}_{\derived(\ca{C})}(P,P) \simeq k[t] / t^{n+1}$, $\deg(t) = 2$, as graded algebras.
		\item We have an isomorphism $P \oL{C} S_{\ca{C}} \simeq P[2n]$ in $\derived(\ca{C})$.
	\end{enumerate}
\end{dfn}

\begin{rmk}
	If $X$ is a smooth projective variety such that $\derived(\ca{C}) \simeq D_{qc}(X)$, then an object $P \in \derived(\ca{C})$ is a $\mb{P}^n$-object if and only if the corresponding object in $\derived_{\qc}(X)$ is a $\mb{P}^{n}$-object.
\end{rmk}

In \cite{Seg-autoeq-spherical-twist} the author describes two ways to realise the $\mb{P}$-twist around a $\mb{P}$-object as a spherical twist.
The first one considers $P$ as a dg-module over the dg-algebra $k[t]$, $\deg (t) = 2$, and defines the functor\footnote{To define the action of $k[t]$ on $P$ we assume that we replaced $P$ with an h-projective resolution, that we fixed a representative $\tilde{t} \in \text{Hom}_{\ca{C}}(P,P)$ for the generator of degree $2$, and we make $t$ act as $\tilde{t}$.}
\begin{equation}
	\label{P-twist-as-spherical}
	\derived( k[t] ) \xrightarrow{f_{P}(-) = - \stackrel{L}{\otimes}_{k[t]} P} \derived(\ca{C}).
\end{equation}

\begin{rmk}
	Notice that $\star_k$-perfectness implies $\star_{k[t]}$-perfectness because $k[t]$ is smooth, see e.g. \cite[pag. 7]{shklyarov2007serre}.
\end{rmk}

Then, under a technical assumption (which was subsequently shown to be always satisfied \cite{Formality-P-objects}), \cite[Proposition 4.2]{Seg-autoeq-spherical-twist} proves that \eqref{P-twist-as-spherical} is spherical, that its twist is the $\mb{P}$-twist around $P$, and that its cotwist is $[-2n-2]$.

The second construction of \cite{Seg-autoeq-spherical-twist} uses Koszul duality to rewrite \eqref{P-twist-as-spherical} in a different way.
More precisely, the object $k \in \derived(k[t])$ is compact and we have $\mathrm{RHom}_{k[t]}(k,k) \simeq k[\varepsilon] / \varepsilon^2$, with $\deg( \varepsilon) = -1$.
The $k[t]$-module $k[t,e]$, $\deg(e) = 1$, $d(e) = t$, is h-projective and gives a resolution of $k$ as a right $k[t]$-module.
Moreover, $k[t,e]$ carries a left action of $k[\varepsilon] / \varepsilon^2$ via the degree $-1$ map of $k[t]$-modules $k[t,e] \rightarrow k[t,e]$ that sends $p(t) + e q(t)$ to $q(t)$.
Hence, we get a functor
\begin{equation}
	\label{P-twist-as-spherical-2}
	\derived(k[\varepsilon] / \varepsilon^2) \xrightarrow{ - \stackrel{L}{\otimes}_{k[\varepsilon] / \varepsilon^2} P'} \derived(\ca{C})
\end{equation}
where
\[
	P' := k[t,e] \otimes_{k[t]} P = P[e] = \left\{ P[-2] \xrightarrow{t} \underset{\deg. 0}{P} \right\}.
\]
The twist around \eqref{P-twist-as-spherical-2} is the $\mb{P}$-twist around $P$, and its cotwists is given by $[-2n-2]$.

\begin{rmk}
	Construction \eqref{P-twist-as-spherical-2} was generalised to the case of split $\mb{P}^n$-functors in \cite[Theorem 5.1]{AL-P-n-func}.
\end{rmk}

\subsubsection{The \texorpdfstring{$k[t]$}{k[t]}-model}

Given $\mb{P}^n$-objects $P_1, \dots, P_m$ we can apply \autoref{twist-twist=twist} to \eqref{P-twist-as-spherical} to obtain a spherical functor
\begin{equation}
	\label{gluing-k[t]}
	\derived(R) \xrightarrow{ - \stackrel{L}{\otimes}_{R} (P_m \oplus \dots \oplus P_1)} \derived(\ca{C})
\end{equation}
whose twist is given by the composition of the $\mb{P}$-twists around $P_m, \dots, P_1$.
Here
\[
	R = \bigoplus_{i=1}^{m} k[t] \, \mathrm{id}_{P_i} \oplus \bigoplus_{i > j} \mathrm{H} \overline{\mathrm{om}}_{\ca{C}}(P_i, P_j)
\]
considered as a subalgebra of $\mathrm{H} \overline{\mathrm{om}}_{\ca{C}}(\oplus_{i=1}^m P_i, \oplus_{i=1}^m P_i)$.

The cotwist cannot be described as in \autoref{thm:cotwist-serre-duality} because $k[t]^{\ast}$ is not a shift of $k[t]$.

\begin{ex}
	A geometric example of this construction can be found in \cite[Theorem 3.3.4]{Barb-Flop-flop}.
\end{ex}

\subsubsection{The \texorpdfstring{$k[\varepsilon] / \varepsilon^2$}{k[e]}-model}

Given $\mb{P}^n$-objects $P_1, \dots, P_m$, let us denote $P'_i := k[t,e] \otimes_{k[t]} P_i = \left\{ P_i[-2] \xrightarrow{t} \underset{\mathrm{deg.} \, 0}{P_i} \right\}$.
Applying \autoref{twist-twist=twist} to \eqref{P-twist-as-spherical-2} we obtain a spherical functor
\begin{equation}
	\label{gluing-k[e]}
	\derived(R) \xrightarrow{ - \stackrel{L}{\otimes}_{R} (P'_m \oplus \dots \oplus P'_1)} \derived(\ca{C})
\end{equation}
whose twist is given by the composition of the $\mb{P}$-twists around $P_m, \dots, P_1$.
Here
\[
	R = \bigoplus_{i=1}^{m} k[\varepsilon] / \varepsilon^2 \, \mathrm{id}_{P'_i} \oplus \bigoplus_{i > j} \mathrm{H} \overline{\mathrm{om}}_{\ca{C}}(P'_i, P'_j)
\]
considered as a subalgebra of $\mathrm{H} \overline{\mathrm{om}}_{\ca{C}}(\oplus_{i=1}^m P'_i, \oplus_{i=1}^m P'_i)$.

\begin{ex}
	A geometric example of this construction can be found in \cite[Theorem 3.3.2]{Barb-Flop-flop}.
\end{ex}

\begin{rmk}
	Our hope was to prove an isomorphism $c_{\eqref{gluing-k[e]}}(-) \simeq - \stackrel{L}{\otimes}_R R^{\ast}[-2n-1]$ in analogy with \autoref{thm:cotwist-serre-duality}.
	Unfortunately, we stumble upon technical issues we do not know how to fix.
	More precisely, the components of the cotwist outside the diagonal are of the form $\mathrm{H} \overline{\mathrm{om}}_{\ca{C}}(P'_j, P'_i)[-1]$, $j < i$, and we would like to use Serre Duality to relate them to $\mathrm{H} \overline{\mathrm{om}}_{\ca{C}}(P'_i, P'_j)^{\ast}[-2n-1]$.
	The problem is that Serre Duality provides us with an isomorphism $\mathrm{Hom}^{\bullet}_{\derived(\ca{C})}(P'_j, P'_i)[-1] \simeq \mathrm{Hom}^{\bullet}_{\derived(\ca{C})}(P'_i, P'_j)[-2n-1]$ of $k[\varepsilon] / \varepsilon^2$-bimodules, but it is not clear whether one can lift this isomorphism to a quasi-isomorphism $\mathrm{H} \overline{\mathrm{om}}_{\ca{C}}(P'_j, P'_i)[-1] \rightarrow \mathrm{H} \overline{\mathrm{om}}_{\ca{C}}(P'_i, P'_j)^{\ast}[-2n-1]$ of $k[\varepsilon] / \varepsilon^2$-bimodules.
\end{rmk}

\appendix
\section{Replacing a dg-category with its additive envelope}

\label{appendix}

In this appendix, our aim is to show that to prove \autoref{twist-twist=twist} it is not restrictive to assume that the dg-categories we work with are additive.
In order to state the result precisely, let us recall the setup.

We take three small dg-categories $\ca{A}$, $\ca{B}$, $\ca{C}$ and two spherical bimodules $M \in \derived(\ca{A}\text{-}\ca{C})$, $N \in \derived(\ca{B}\text{-}\ca{C})$.
Then, we consider the bimodule $P = \rmodule{M}{N}^{t} \in \derived((\ca{B} \sqcup_{\varphi} \ca{A} )\text{-} \ca{C})$, $\varphi = M \ob{C} \dual{N}{C}$, with structure morphism $\mathrm{tr} : M \ob{C} \dual{N}{C}\ob{C} N \rightarrow M$, and we claim that this bimodule is spherical with twist and cotwist described as in \autoref{twist-twist=twist}.

We now give the following

\begin{dfn}
	\label{dfn:additive-envelope}
	Given a dg-category $\ca{D}$ we define its additive envelope as the dg-category $\ca{D}^{\mathrm{add}}$ whose objects are formal expressions
	\[
		\begin{array}{lcr}
			a_1 \oplus \dots \oplus a_n, & & a_i \in \ca{D},
		\end{array}
	\]
	and whose morphisms are given by
	\[
		\displaystyle{\Hom_{\ca{D}^{\mathrm{add}}} \left(\bigoplus_{i=1}^{n} a_i, \bigoplus_{j=1}^m b_j \right) = 
		\left(
			\begin{array}{ccc}
				\Hom_{\ca{D}}(a_1, b_1) & \dots & \Hom_{\ca{D}}(a_n, b_1)\\
				\vdots & & \vdots\\
				\Hom_{\ca{D}}(a_1, b_m) & \dots & \Hom_{\ca{D}}(a_n, b_m)
			\end{array}
		\right)},
	\]
	with degreewise graded decomposition, termwise differential, and composition given by matrix multiplication.
\end{dfn}

The dg-category $\add{\ca{D}}$ is an additive category, and we have a fully faithfull embedding of dg-categories $f_{\ca{D}} : \ca{D} \hookrightarrow \add{\ca{D}}$.
Moreover, restriction along $f_{\ca{D}}$ gives an equivalence
\begin{equation}
	\label{modules-over-additive-closure}
	\stmod{\add{\ca{D}}} \simeq \stmod{\ca{D}}.
\end{equation}
For a module $E \in \stmod{D}$ we will denote $\add{E} = E \oo{D} \add{\ca{D}}$, and similarly for bimodules.

\begin{rmk}
	The additive closure of $\ca{D}$ can be equivalently defined as the smallest additive subcategory of $\stmod{D}$ containing the image of the Yoneda embedding.
	However, describing it in abstract terms as we did above suits better our purposes.
\end{rmk}

Let us consider the categories $\add{\ca{A}}$, $\add{\ca{B}}$ and the bimodules $\add{M}$, $\add{N}$, and $\add{\varphi}$.
We set $\ca{R} = \add{\ca{B}} \times_{\add{\varphi}[1]} \add{\ca{A}}$ and $\tilde{P} = \rmodule{\add{M}}{\add{N}}^t \in \barbimod{R}{C}$ the bimodule given by the structure morphism
\[
	\begin{aligned}
		\add{\rho} : & \, \add{\ca{A}} \oo{A} M \ob{C} \dual{N}{C} \oo{B} \add{\ca{B}} \ob{\add{\ca{B}}} \add{N}\xrightarrow{\simeq} \add{\ca{A}} \oo{A} M \ob{C} \dual{(\add{N})}{C} \ob{\add{\ca{B}}} \add{N} \rightarrow\\
		& \, \xrightarrow{\mathrm{id}^{\otimes 2} \ob{} \mathrm{tr}} \add{\ca{A}} \oo{A} M = \add{M}.
	\end{aligned}
\]

We aim to show that we can translate all the questions we have regarding \eqref{functor-f-P} in terms of $\tilde{P}$.
In order to do so, we prove the following lemmas.

\begin{lem}
	\label{lem:twist-cotwist-swapped}
	Let $\ca{D}$, $\ca{E}$, $\ca{F}$ three small dg-categories over a field $k$.
	If $f : \derived(\ca{E}) \rightarrow \derived(\ca{F})$ is tensor functor and $\Phi: \derived(\ca{D}) \rightarrow \derived(\ca{E})$ is a tensor equivalence, then $f$ is spherical if and only if $f \Phi$ is, and we have
	\begin{equation}
		\label{relation-twist-cotwist}
		\begin{array}{lcr}
			t_f \simeq t_{f \Phi} &
			\text{and} &
			c_{f \Phi} \simeq \Phi^{-1} \circ c_f \circ \Phi.
		\end{array}
	\end{equation}
\end{lem}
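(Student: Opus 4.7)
Since $\Phi$ is a tensor equivalence, it is a bimodule-induced functor that admits a quasi-inverse $\Phi^{-1}$, which simultaneously serves as left and right adjoint and is itself a tensor functor. Consequently, left and right adjoints of $f \Phi$ exist and are continuous if and only if the same holds for $f$; moreover, whenever they exist, \autoref{thm:adjoints} identifies them as $(f\Phi)^R \simeq \Phi^{-1} \circ f^R$ and $(f\Phi)^L \simeq \Phi^{-1} \circ f^L$, both tensor functors. In particular $f\Phi$ satisfies the continuity hypotheses required for spherical functors precisely when $f$ does.

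My first step would be to compute the twist. We have
\[
  (f\Phi)(f\Phi)^R \simeq f\Phi \circ \Phi^{-1} f^R \simeq f f^R,
\]
and under this identification the counit of the composite adjunction $f\Phi \dashv \Phi^{-1} f^R$ coincides with the counit of $f \dashv f^R$ (the counit $\Phi \Phi^{-1}\to \id$ contributes an isomorphism). Taking cones yields $t_{f\Phi}\simeq t_f$. Symmetrically,
\[
  (f\Phi)^R(f\Phi) \simeq \Phi^{-1} f^R f \Phi,
\]
and the unit $\id_{\derived(\ca{D})}\to (f\Phi)^R(f\Phi)$ factors as $\id \xrightarrow{\simeq} \Phi^{-1}\Phi \to \Phi^{-1}f^Rf\Phi$, where the second arrow is $\Phi^{-1}$ applied to the unit $\id \to f^Rf$ and then precomposed with $\Phi$. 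Taking shifted cones gives $c_{f\Phi}\simeq \Phi^{-1}\circ c_f\circ \Phi$, as required.

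With $t_f, t_{f\Phi}$ and $c_f, c_{f\Phi}$ related by the above formulas, the first two conditions of \autoref{def-spherical-functor} manifestly transport: $t_{f\Phi}$ is an autoequivalence of $\derived(\ca{F})$ iff $t_f$ is, and $c_{f\Phi}=\Phi^{-1}c_f\Phi$ is an autoequivalence of $\derived(\ca{D})$ iff $c_f$ is an autoequivalence of $\derived(\ca{E})$. By \autoref{thm:spherical-functors}, satisfying two conditions is enough, so $f$ is spherical iff $f\Phi$ is, yielding the isomorphisms \eqref{relation-twist-cotwist}.

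The only delicate point is to ensure that the identifications above are not merely natural isomorphisms of functors but compatible identifications of the structural morphisms (units, counits) that define twist and cotwist. This is where the hypothesis that $\Phi$ is a tensor equivalence is essential: it guarantees that the unit and counit of $\Phi \dashv \Phi^{-1}$ are invertible at the bimodule level, so that all the manipulations above are carried out with maps of bimodules (equivalently, in the bar-enhanced setting of \autoref{thm:enhancement}) and not just on homotopy categories. I expect this compatibility check to be the main technical step, but it is routine given the formalism of \cite{Anno-Logvinenko-Bar-Categories}.
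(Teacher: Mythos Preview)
Your proposal is correct and follows essentially the same approach as the paper: identify the unit and counit of the composite adjunction $f\Phi \dashv \Phi^{-1}f^R$ as factoring through those of $f\dashv f^R$ (with the $\Phi\Phi^{-1}\to\id$ part being an isomorphism), deduce the relations $t_{f\Phi}\simeq t_f$ and $c_{f\Phi}\simeq\Phi^{-1}c_f\Phi$, and then invoke \autoref{thm:spherical-functors}. The paper is terser---it simply states the factored form of the unit and counit and verifies the counit case---but the content is the same; your explicit remark about the bimodule-level compatibility is precisely the point the paper handles by working with tensor functors throughout.
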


\begin{proof}
	Notice that $f$ has a left (resp. right) adjoint that is a tensor functor if and only if $f\Phi$ does.
	Therefore, by \autoref{thm:spherical-functors}, to conclude it is enough to prove the description of the twist and the cotwist.
	In order to do so, we prove that the unit and counit morphism of $f \Phi \dashv \Phi^R f^R$ are given as follows
	\[
		\begin{array}{lcr}
			f  \Phi  \Phi^R  f^R \xrightarrow{f \circ \mathrm{tr}_{\Phi} \circ f^{R}} f  f^R \xrightarrow{\mathrm{tr}_f} \mathrm{id}
		& \mathrm{and} & 
			\mathrm{id} \xrightarrow{\mathrm{act}_{\Phi}} \Phi^R \Phi \xrightarrow{\Phi^R \circ \mathrm{act}_{f} \circ \Phi} \Phi^R f^R f \Phi.
		\end{array}
	\]

	We prove the description of the counit, the proof for the unit being similar.
	The map $\Phi \Phi^R f^R \rightarrow f^R$ corresponding to the identity is given by $\mathrm{tr}_{\Phi} \circ f^R$.
	Then, the counit map is given as
	\[
		f  \Phi  \Phi^R  f \xrightarrow{f \circ \mathrm{tr}_{\Phi} \circ f^{R}} f  f^R \xrightarrow{\mathrm{tr}_f} \mathrm{id},
	\]
	which proves the claim.
\end{proof}

\begin{prop}
	\label{prop:swap-categories}
	Let $\ca{D}$, $\ca{E}$, $\ca{F}$ three small dg-categories over a field $k$ and let $F : \ca{D} \rightarrow \ca{E}$ be a dg-functor.
	Fix $M \in \derived(\ca{D}\text{-}\ca{F})$ and $N  \in \derived(\ca{E}\text{-}\ca{F})$, and set $M' = \ind{F^{\mathrm{opp}}}(M)$, $N' = \res{F^{\mathrm{opp}}}(N)$.
	
	If $\ind{F}$ is an equivalence, then $M$ (resp. $N$) is spherical if and only if $M'$ (resp. $N'$) is.
\end{prop}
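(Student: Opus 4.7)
The plan is to apply \autoref{lem:twist-cotwist-swapped} with the tensor equivalence $\Phi = \ind{F} \colon \derived(\ca{D}) \to \derived(\ca{E})$ (this is tensor product with the bimodule ${}_F\ca{E}$, hence a tensor functor, and an equivalence by hypothesis). Once I establish natural isomorphisms $f_{M'} \circ \ind{F} \simeq f_M$ and $f_N \circ \ind{F} \simeq f_{N'}$, the lemma applied with $f = f_{M'}$ (respectively $f = f_N$) immediately yields the two claims: $M$ is spherical if and only if $M'$ is, and $N$ is spherical if and only if $N'$ is. The main obstacle throughout will be the careful bookkeeping of left/right actions and the $F^{\mathrm{opp}}$-conventions in establishing these two natural isomorphisms.

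The isomorphism involving $N$ is essentially immediate from the definitions: by the formula for $\res{F^{\mathrm{opp}}}$, the bimodule $N' = \res{F^{\mathrm{opp}}}(N)$ satisfies ${}_d N' = {}_{F(d)} N$ for every $d \in \ca{D}$. Evaluating on representables gives
\[
	f_{N'}(h^d) = {}_d N' = {}_{F(d)} N = f_N(h^{F(d)}) = f_N(\ind{F}(h^d)),
\]
naturally in $d$. Since both sides are continuous tensor functors and the representables compactly generate $\derived(\ca{D})$, this extends uniquely to the desired natural isomorphism.

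For the isomorphism involving $M$, I first observe that $\ind{F^{\mathrm{opp}}}$ is fully faithful: $\ind{F}$ being an equivalence is in particular fully faithful, which by evaluation on representables forces $H^0(F)$ to be fully faithful; the same then holds for $H^0(F^{\mathrm{opp}})$, so \autoref{prop:KL-ind-res} gives fully faithfulness of $\ind{F^{\mathrm{opp}}}$. Consequently the adjunction unit $M \to \res{F^{\mathrm{opp}}}(\ind{F^{\mathrm{opp}}}(M)) = \res{F^{\mathrm{opp}}}(M')$ is an isomorphism, so ${}_d M \simeq {}_{F(d)} M'$ for every $d \in \ca{D}$. Evaluating on representables gives $f_M(h^d) \simeq f_{M'}(h^{F(d)}) = f_{M'}(\ind{F}(h^d))$, and continuity again extends this to a natural isomorphism on all of $\derived(\ca{D})$. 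Once the adjunction unit $M \simeq \res{F^{\mathrm{opp}}}(M')$ is identified as the key structural fact for the $M$-side, everything follows formally by invoking \autoref{lem:twist-cotwist-swapped}.
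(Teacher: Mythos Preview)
Your proof is correct and follows the same overall strategy as the paper: both reduce to \autoref{lem:twist-cotwist-swapped} with $\Phi = \ind{F}$, after identifying $f_{N'} \simeq f_N \circ \ind{F}$ and $f_M \simeq f_{M'} \circ \ind{F}$. Your treatment of $N'$ matches the paper's exactly.

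For $M'$ there is a minor difference worth noting. The paper observes directly from the definition of induction that $M' = \ind{F^{\mathrm{opp}}}(M) \simeq \ca{E}_F \oL{D} M$ as an $\ca{E}\text{-}\ca{F}$ bimodule, whence $f_{M'}(-) = \res{F}(-) \oL{D} M = f_M \circ \res{F}$; combined with $\res{F} = \ind{F}^{-1}$ (from \autoref{prop:KL-ind-res}) this gives the required isomorphism in one line. Your route via fully faithfulness of $\ind{F^{\mathrm{opp}}}$ and the adjunction unit $M \xrightarrow{\sim} \res{F^{\mathrm{opp}}}(M')$ reaches the same conclusion but is slightly more circuitous; the detour is unnecessary since the identification $f_{M'} = f_M \circ \res{F}$ holds on the nose without any hypothesis on $F$.
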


\begin{proof}
	Notice that the functor induced by $M'$ is $\res{F}(-) \oL{\ca{D}} M \colon \derived(\ca{E}) \rightarrow \derived(\ca{F})$, and the one induced by $N'$ is $\ind{F}(-) \oL{\ca{E}} N \colon \derived(\ca{D}) \rightarrow \derived(\ca{F})$.

	To conclude we notice that $\ind{F}^{-1} = \res{F}$ by \autoref{prop:KL-ind-res} and therefore we can apply \autoref{lem:twist-cotwist-swapped}.
\end{proof}

Notice that we have a fully faithful functor
	\begin{equation}
		\label{functor-to-add}
			\ca{B} \sqcup_{\varphi} \ca{A} \hookrightarrow \add{\ca{B}} \sqcup_{\add{\varphi}} \add{\ca{A}} \hookrightarrow \add{\ca{B}} \times_{\add{\varphi}[1]} \add{\ca{A}}
	\end{equation}
which induces an equivalence of derived categories by the same arguments as in \autoref{subsub:relations}.
Applying \autoref{prop:swap-categories} to \eqref{functor-to-add} and using relations \eqref{relation-twist-cotwist}, we get

\begin{prop}
	\label{prop:additive-not-restrictive}
	The bimodule $\tilde{P}$ is spherical if and only if $P$ is too.
	Moreover, $t_{\tilde{P}} \simeq t_{P}$ and the cotwist around $c_P$ has the structure claimed in \autoref{twist-twist=twist} if and only if the cotwist around $\tilde{P}$ has the same structure with $M$ and $N$ replaced by $\add{M}$ and $\add{N}$, respectively.
\end{prop}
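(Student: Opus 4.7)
The plan is to apply \autoref{prop:swap-categories} to the fully faithful dg-functor $F$ of \eqref{functor-to-add}, which exhibits $\ca{B} \sqcup_{\varphi} \ca{A}$ as a full dg-subcategory of $\ca{R} = \add{\ca{B}} \times_{\add{\varphi}[1]} \add{\ca{A}}$. First I would recall, exactly as in \autoref{subsub:relations}, that $\ind{F}$ is an equivalence of derived categories: fully faithfulness comes from \autoref{prop:KL-ind-res} together with fully faithfulness of $F$, while essential surjectivity follows from continuity of $\ind{F}$ and the fact that its essential image contains a set of compact generators of $\derived(\ca{R})$, namely the representables $h^{(b,0,0)}$ and $h^{(0,a,0)}$.

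Next I would identify $\ind{F^{\mathrm{opp}}}(P)$ with $\tilde{P}$ in $\derived(\ca{R}\text{-}\ca{C})$. This is a bookkeeping argument: the equivalence \eqref{modules-over-additive-closure} applied on the $\ca{C}$-side sends the components $M$, $N$ of $P$ to $\add{M}$, $\add{N}$, and the fully faithful embedding \eqref{functor-to-add} together with naturality of the trace sends the structure morphism $\mathrm{tr}$ of $P$ to precisely the map $\add{\rho}$ defining $\tilde{P}$. Since $\ind{F^{\mathrm{opp}}}$ restricted to the representables is determined by $F^{\mathrm{opp}}$, the two bimodules agree on a set of compact generators, and both are continuous in the $\ca{R}$-argument, so they are isomorphic.

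Once these identifications are in place, \autoref{prop:swap-categories} immediately gives that $\tilde{P}$ is spherical if and only if $P$ is, and \autoref{lem:twist-cotwist-swapped} yields $t_{\tilde{P}} \simeq t_P$ together with $c_P \simeq \ind{F}^{-1} \circ c_{\tilde{P}} \circ \ind{F}$. To conclude I would transfer the matrix description of the cotwist: since $\ind{F}$ preserves the SOD structure of \autoref{rmk:SOD-for-all} (the embedding functors $\ind{i_{\ca{A}}}$, $\ind{i_{\ca{B}}}$, $\res{i_{\ca{A}}^R}$ are intertwined with their additive-envelope counterparts), a matrix description of $C_P$ as in \autoref{twist-twist=twist} (with diagonal blocks $C_M$, $C_N$ and off-diagonal block $\mathrm{H}\overline{\mathrm{om}}_{\ca{C}}(M,N)[-1]$) exists if and only if the analogous description of $C_{\tilde{P}}$ exists with $M$, $N$, $C_M$, $C_N$ replaced by their additive envelopes.

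The main obstacle is the identification in the second paragraph, because $\tilde{P}$ is defined over a gluing of the form $\add{\ca{B}} \times_{\add{\varphi}[1]} \add{\ca{A}}$, while $P$ lives over the upper-triangular gluing $\ca{B} \sqcup_{\varphi} \ca{A}$: one must be careful to factor $F$ through $\add{\ca{B}} \sqcup_{\add{\varphi}} \add{\ca{A}}$ and check that the two stages of the inclusion \eqref{functor-to-add} are both compatible with the structure morphisms defining $P$ and $\tilde{P}$. Once this compatibility is verified, everything else is a direct appeal to \autoref{prop:swap-categories} and \autoref{lem:twist-cotwist-swapped}.
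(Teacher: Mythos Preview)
Your approach is essentially the same as the paper's: apply \autoref{prop:swap-categories} to the fully faithful functor \eqref{functor-to-add} and invoke the relations \eqref{relation-twist-cotwist} from \autoref{lem:twist-cotwist-swapped}. The paper's proof is a single sentence to this effect, so you have simply unpacked the details it leaves implicit --- the verification that $\ind{F}$ is an equivalence, the identification of $\ind{F^{\mathrm{opp}}}(P)$ with $\tilde{P}$, and the compatibility of the SOD structures under $\ind{F}$ needed to transfer the matrix description of the cotwist.
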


\bibliographystyle{alphaurl}

\end{document}